\definecolor{allrefcolors}{rgb}{0,0.5,0.4}
\newtheorem{theorem}{Theorem}[section]
\newtheorem{lemma}[theorem]{Lemma}
\newtheorem{proposition}[theorem]{Proposition}
\theoremstyle{definition}
\newtheorem{definition}[theorem]{Definition}
\theoremstyle{remark}
\newtheorem{remark}[theorem]{Remark}
\numberwithin{equation}{section}
\newcommand{\R}{\mathbb R}
\newcommand{\C}{\mathbb C}
\newcommand{\Z}{\mathbb Z}
\newcommand{\W}{\mathfrak W}
\newcommand{\p}{\mathbf p}
\newcommand{\vol}{\operatorname{vol}}
\newcommand{\Diff}{\operatorname{Diff}}
\newcommand{\crit}{\operatorname{crit}}
\newcommand{\im}{\operatorname{im}}
\newcommand{\id}{\operatorname{id}}
\newcommand{\sm}{\mathrm{sm}}
\newcommand{\std}{\mathrm{std}}
\renewcommand{\Re}{\operatorname{Re}}
\renewcommand{\Im}{\operatorname{Im}}
\newcommand{\comment}[1]{}
\def\ol{\overline}
\def\rst#1{|_{#1}}
\def\der_#1#2{\frac{ \partial #2 }{ \partial #1 }}
\def\OO{\mathcal{O}}
\def\eps{\epsilon}
\begin{document}

\title{Existence of Lefschetz fibrations on Stein and Weinstein domains}

\author{Emmanuel Giroux\\John Pardon}

\date{17 July 2015; Revised 7 April 2016}

\maketitle

\begin{abstract}
We show that every Stein or Weinstein domain may be presented (up to
deformation) as a Lefschetz fibration over the disk. The proof is an application
of Donaldson's quantitative transversality techniques.
\end{abstract}


\section{Introduction}

In this paper, we prove the existence of \emph{Lefschetz fibrations} (certain 
singular fibrations with Morse-type singularities) on \emph{Stein domains} 
(from complex geometry) and on \emph{Weinstein domains} (from symplectic 
geometry).  These two results are linked (in fact, logically so) by 
the close relationship between Stein and Weinstein structures established 
in the book by Cieliebak--Eliashberg \cite{cieliebakeliashberg} building 
on earlier work of Eliashberg \cite{eliashbergstein}.  Nevertheless, 
they can be understood independently from either a purely complex geometric 
viewpoint or from a purely symplectic viewpoint.

\subsection{Lefschetz fibrations on Stein domains}

We begin by explaining our results for Stein domains.

\begin{definition}
A real-valued function $\phi$ on a complex manifold $V$ is called 
\emph{$J$-convex} (or \emph{strictly plurisubharmonic}) iff 
$(id'd''\phi)(v,Jv)>0$ 
for every nonzero (real) tangent vector $v$.
\end{definition}

\begin{definition}
A \emph{Stein manifold} is a complex manifold $V$ which admits a smooth
exhausting $J$-convex function $\phi: V \to \R$.
\end{definition}

\begin{definition} \label{steindomaindef}
A \emph{Stein domain} is a compact complex manifold with boundary 
$V$ which admits a smooth $J$-convex function $\phi: V \to \R$ with $\partial V 
= \{\phi = 0\}$ as a regular level set.

(For us, a complex manifold with boundary (or corners) shall mean one equipped 
with a germ of (codimension zero) embedding into an (open) complex manifold. A 
holomorphic function on a complex manifold with boundary (or corners) is one 
which extends holomorphically to an open neighborhood in the ambient (open) 
complex manifold.)
\end{definition}

For example, if $\ol V$ is a Stein manifold with smooth exhausting $J$-convex 
function $\phi: \ol V \to \R$ with $\{\phi = 0\}$ as a regular level set, then 
$V := \{\phi \le 0\}$ is a Stein domain. In fact, it is not hard to see that 
every Stein domain is of this form.

\begin{definition} \label{steinLF}
Let $D^2 \subseteq \C$ denote the closed unit disk. A \emph{Stein Lefschetz 
fibration} is a holomorphic map $\pi: V \to D^2$ where $V$ is a compact complex 
manifold with corners, such that:
\begin{itemize}
\item (Singular fibration)
The map $\pi$ is a (smooth) fibration with manifold with boundary fibers, except
for a finite number of critical points $\crit(\pi)$ in the interior of $V$.
\item (Non-degenerate critical points)
Near each critical point $p \in \crit(\pi)$, there are local holomorphic
coordinates in which $\pi$ is given by $(z_1, \ldots, z_n) \mapsto \pi(p) + 
\sum_{i=1}^nz_i^2$ (according to the complex Morse lemma, this holds iff the 
complex Hessian at $p$ is non-degenerate). Furthermore, all critical values 
are dinstinct.
\item (Stein fibers)
There exists a $J$-convex function $\phi: V \to \R$ with $\partial_h V = \{\phi
= 0\}$ as a regular level set, where $\partial_h V := \bigcup_{p \in D^2} 
\partial (\pi^{-1}(p))$ denotes the ``horizontal boundary'' of $V$.
\end{itemize}
Note that the boundary of $V$ is the union of the horizontal boundary 
$\partial_h V$ and the ``vertical boundary'' $\partial_v V := 
\pi^{-1}(\partial D^2)$, whose intersection $\partial_h V \cap \partial_v V$ 
is the corner locus.

The total space $V$ of any Stein Lefschetz fibration may be smoothed out to 
obtain a Stein domain $V^\sm$, unique up to deformation. Specifically, for any 
function $g: \R_{<0} \to \R$ satisfying $g'>0$, $g''>0$, and $\lim_{x \to 0^-} 
g(x) = \infty$, the function $\Phi_g := g(|\pi|^2-1) + g(\phi)$ is an exhausting
$J$-convex function on $V^\circ$. Moreover, the critical locus of $\Phi_g$ stays
away from $\partial V$ as $g$ varies in any compact family (this follows from
the obvious inclusion $\crit (\Phi_g) \subseteq \bigcup_{p \in D^2} \crit (\phi
\rst{ \pi^{-1}(p) })$ and the fact that the latter is a compact subset of $V
\setminus \partial_h V$). As a result, the sublevel set $\{\Phi_g \leq M\}$ is a
Stein domain which, up to deformation, is independent of the choice of $g$ and
the choice of $M$ larger than all critical values of $\Phi_g$. We denote this
(deformation class of) Stein domain by $V^\sm$, which, of course, depends not
only on $V$, but also on $\pi$.
\end{definition}

The simplest (and weakest) version of our existence result is the following.

\begin{theorem} \label{steinexistence}
Let $V$ be a Stein domain. There exists a (Stein) Lefschetz fibration $\pi: V'
\to D^2$ with $(V')^\sm$ deformation equivalent to $V$.
\end{theorem}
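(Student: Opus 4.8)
The theorem states: Let $V$ be a Stein domain. There exists a Stein Lefschetz fibration $\pi: V' \to D^2$ with $(V')^{\sm}$ deformation equivalent to $V$.

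The setup: We have a Stein domain $V$ — compact complex manifold with boundary, with $J$-convex function $\phi$, $\partial V = \{\phi = 0\}$ regular. We want to produce a holomorphic map to the disk that is a Lefschetz fibration, whose smoothing recovers $V$ up to deformation.

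This is a Donaldson-style argument. The classical Donaldson theorem (for closed symplectic manifolds) produces symplectic Lefschetz pencils using asymptotically holomorphic sections of high tensor powers of a prequantization line bundle. Here, since $V$ is Stein, we have genuine holomorphic functions available (not just asymptotically holomorphic sections) — this is the key simplification. The plan:

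Key steps in order:

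1. **Embed $V$ into affine space.** By standard Stein theory (Bishop, Narasimhan, Remmert), $V$ admits a proper holomorphic embedding into some $\mathbb{C}^N$. Actually one wants a bit more: extend $V$ slightly to a larger Stein manifold $\hat V$ (using that $V = \{\phi \le 0\}$ inside $\{\phi \le \epsilon\}$), embed $\hat V \hookrightarrow \mathbb{C}^N$ holomorphically and properly.

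2. **Find a good pair of functions $(f_0, f_1)$.** We want two holomorphic functions on $\hat V$ (restrictions of generic affine-linear or polynomial functions on $\mathbb{C}^N$, or more robustly, generic holomorphic functions) such that:
   - The map $f = [f_0 : f_1]: \hat V \dashrightarrow \mathbb{CP}^1$ (or just $f_1/f_0$ where defined) has only nondegenerate critical points with distinct critical values on a neighborhood of $V$;
   - The base locus $\{f_0 = f_1 = 0\}$ is cut out transversally (a smooth complex codimension-2 submanifold), so we can blow it up;
   - $\partial_h$ behavior is controlled: the restriction of $f$ to $\partial V = \{\phi = 0\}$ should be a submersion, so fibers meet the boundary transversally and the boundary is "horizontal."

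The existence of such generic $(f_0, f_1)$ is where **quantitative transversality à la Donaldson** enters — though in the Stein/holomorphic setting one can hope to use a Bertini-type / Sard-type argument since honest holomorphic functions (indeed polynomials after embedding) are available. The subtlety: we need transversality to hold *uniformly up to the boundary* $\partial V$, and we need *three* transversality conditions simultaneously (base locus smooth, Lefschetz-type critical points, boundary submersion). A naive Bertini argument gives genericity on the open manifold but controlling things at $\partial V$ — and ensuring the $J$-convex function survives — requires Donaldson's quantitative approach: one works with a parameter (analogous to $k \to \infty$), gets estimates with explicit constants, and perturbs within a bounded family.

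3. **Blow up the base locus.** Let $\tilde V \to V$ be the blowup along $B = \{f_0 = f_1 = 0\} \cap V$. The rational map $f$ lifts to an honest holomorphic map $\pi: \tilde V \to \mathbb{CP}^1$. Restrict to a disk $D^2 \subset \mathbb{CP}^1$ (after an affine change, arrange that all critical values and $[0:1], [1:0]$ lie in, say, the lower hemisphere, or better: arrange the fiber over $\infty$ contains the exceptional divisor, and take $D^2$ to be a large disk in the affine chart containing all critical values). Actually, the cleanest route: after choosing $(f_0, f_1)$ generically, arrange that $\infty$ is a regular value and $\{f_0 = 0\} \cap V$ avoids the critical points; take $D^2 = \{|f_1/f_0| \le R\}$ for suitable $R$. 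Then $\pi: V' := \pi^{-1}(D^2) \to D^2$ with $V'$ a compact complex manifold with corners. Check the Lefschetz fibration axioms: singular fibration with nondegenerate critical points and distinct critical values (from step 2), and Stein fibers — need to produce the required $J$-convex function $\Phi$ on $V'$ with $\partial_h V' = \{\Phi = 0\}$.

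4. **Construct the $J$-convex function on $V'$.** The horizontal boundary $\partial_h V'$ is (essentially) the preimage of $\partial V = \{\phi = 0\}$. The function $\phi$ pulls back to a $J$-convex function on the part of $\tilde V$ away from the exceptional divisor; near the exceptional divisor one patches using that blowups of Stein manifolds along complex submanifolds are again Stein (the exceptional divisor can be made $J$-convex-friendly using the tautological-bundle metric). So produce $\Phi: V' \to \R$ $J$-convex with $\partial_h V' = \{\Phi = 0\}$ a regular level set.

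5. **Identify the smoothing.** Finally, show $(V')^{\sm} \simeq V$ as Stein domains up to deformation. By the recipe in Definition 1.5, $(V')^{\sm}$ is a sublevel set of $\Phi_g = g(|\pi|^2 - 1) + g(\Phi)$. Argue that this Stein domain deformation-retracts onto / is Stein-deformation-equivalent to $V$: the blowup + disk-restriction, after smoothing, undoes itself. Concretely: the smoothing of the blowup of a Stein domain along a complex submanifold, glued with the disk-fibration structure, is deformation equivalent to the original Stein domain — one exhibits an explicit family of $J$-convex functions interpolating, using that blowing up and then "filling back in" near the exceptional divisor via the large-$g$ smoothing reverses the blowup up to Stein deformation.

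**Main obstacle.** The heart of the matter — and the reason Donaldson's techniques are invoked rather than plain Bertini — is step 2: obtaining a pair of holomorphic functions with simultaneous *quantitative*, *boundary-uniform* transversality (smooth base locus, Lefschetz singularities, distinct critical values, *and* submersive on $\partial V$), robustly enough that the $J$-convex function $\phi$ can be coupled along and survive the blowup. Plain dimension-counting / Sard works fiberwise and on the open manifold, but the interplay with the fixed boundary $\{\phi = 0\}$ and with preserving $J$-convexity after blowing up forces the quantitative, estimate-driven perturbation scheme. I expect this to occupy the bulk of the proof; steps 1, 3, 4, 5 are more standard Stein geometry (embeddings, blowups preserve Steinness, smoothing sublevel sets), modulo care with corners.
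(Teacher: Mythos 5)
Your proposal follows the template of Donaldson's construction of Lefschetz \emph{pencils} on closed symplectic manifolds: pick a pair of sections, control the base locus, blow it up, then restrict over a disk. The paper takes a genuinely different and considerably simpler route that exploits a feature special to the Stein setting: the prequantization line bundle $L^{\phi}$ is the trivial complex line bundle (with a twisted Hermitian metric), so the constant section ``$1$'' is a nowhere-vanishing holomorphic section. Dividing a Donaldson-type peak section $s$ of $L^{k\phi}$ by ``$1$'' yields a genuine holomorphic \emph{function} $f:\ol V \to \C$ (Theorem \ref{quantitativesteinexistence}), and the Lefschetz fibration is simply $\pi := \eta^{-1} f$ restricted to the subdomain $\pi^{-1}(D^2)\subseteq V$. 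There is no pencil, no base locus, and no blowup; your steps 3 and 4 are unnecessary, and your step 5 becomes elementary --- the paper just interpolates along $\{\pi^{-1}(D^2_r)\}_{1\le r<\infty}$, noting that $|\pi|$ is bounded on the compact $V$ so the family reaches $V$ itself.

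This has two concrete consequences for assessing your plan. First, the quantitative transversality you ask for in step 2 is overbuilt: there is no base locus to make smooth, and the Morse conditions on $\crit(\pi)$ with distinct critical values need not be quantitative at all --- the paper gets them afterwards by a soft generic perturbation of $\pi$ using that sections of $\OO_{\ol V}$ generate $\Omega^1_{\ol V}$. The only quantitative input required is the single boundary estimate $|f|+k^{-1/2}|df\rst\xi|>\eta$ on $\{\phi=0\}$, which forces the fibers to meet $\partial V$ transversally. Second, your step 5 is a genuine gap as written. Blowing up $B\subset V$ changes the topology, and it is not at all obvious (and you give no argument) that the Stein smoothing of $\pi^{-1}(D^2)\subset\widetilde V$ is Stein-deformation-equivalent to $V$; even in the simplest example $V=B^4$, $B=\{0\}$, verifying this requires a careful analysis of the Liouville/Stein structure near the exceptional divisor and near the fiber over $\infty$. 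The paper's no-blowup route sidesteps this entirely, which is precisely why the passage from Stein (rather than closed symplectic) hypotheses pays off. If you do want to push your route through, the first thing to fix is to arrange $f_0$ nonvanishing on the compact $V$ --- but then $B\cap V=\emptyset$, there is nothing to blow up, and you have reproduced the paper's single-function approach.
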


Deformation is meant in the sense of a real $1$-parameter family of Stein 
domains. The nature of the deformation required is made explicit by considering
the following stronger version of our existence result.

\begin{theorem} \label{refinedsteinexistence}
Let $V$ be a Stein domain. For every sufficiently large real number $k$, there 
exists a holomorphic function $\pi: V \to \C$ such that:
\begin{itemize}
\item
For $|\pi(p)| \ge 1$, we have $d\log\pi(p) = k \cdot d'\phi(p) + O(k^{1/2})$.
\item
For $|\pi(p)| \le 1$ and $p \in \partial V$, we have $d\pi(p) \rst \xi \ne 0$.
\end{itemize}
We may, in addition, require that $\pi^{-1}(D^2)$ contain any given compact 
subset of $V^\circ$.
\end{theorem}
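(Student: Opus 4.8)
The plan is to construct $\pi$ as an approximately holomorphic section of a high tensor power $L^{\otimes k}$ of a prequantization line bundle, following Donaldson's quantitative transversality program as adapted to the Stein/Weinstein setting. First I would fix an auxiliary Kähler structure: on a Stein domain the function $\phi$ provides a Kähler form $\omega = -d d^{\mathbb C}\phi$ (at least near $\partial V$, suitably modified in the interior), and a holomorphic line bundle $L$ with a Hermitian metric whose curvature is $-i\omega$; concretely one can take $L$ trivial with metric $e^{-k\phi}$ so that the relevant weighted $L^2$ and $C^0$ norms are governed by $\phi$. The target function $k\cdot d'\phi$ appearing in the first bullet is exactly $d\log$ of the natural "peak section" $e^{-k\phi}$-type object, so the estimate $d\log\pi = k\, d'\phi + O(k^{1/2})$ is the statement that $\pi$ is, on the region $|\pi|\ge 1$, a small perturbation of such a Gaussian peak concentrated near the maximum of $\phi$ — i.e. near $\partial V$.

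Second, I would set up the approximately holomorphic framework: build a large supply of sections of $L^{\otimes k}$ with Gaussian decay on the scale $k^{-1/2}$ in Donaldson-normalized coordinates, with uniform $C^m$ bounds after rescaling, and with $\bar\partial$-error of size $O(k^{-1/2})$ relative to the $C^0$ norm. The two conditions to achieve are (i) quantitative transversality of $\pi$ to $0$ away from where we want fibers to be smooth — more precisely we need the singular points of $\pi$ (zeros of $\partial\pi$ in appropriate trivializations) to be nondegenerate in the complex Morse sense, with distinct critical values — and (ii) transversality of $\pi|_{\partial V}$ along the contact distribution $\xi$, which is the content of the second bullet: $d\pi|_\xi\ne 0$ on $\{|\pi|\le 1\}\cap\partial V$ guarantees that the vertical boundary meets the horizontal boundary cleanly in a corner and that fibers have the desired boundary behavior. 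I would obtain (i) by the standard Donaldson globalization: triangulate, work chart by chart, and at each step perturb by a localized section to push a relevant $1$-jet (the derivative $\partial\pi$, viewed as a section of $L^{\otimes k}\otimes T^*V$) uniformly away from the "bad" stratified set, using the quantitative Sard/transversality lemma; nondegeneracy of critical points and distinctness of critical values are then generic codimension conditions handled by finitely many further perturbations.

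Third — and this is where most of the work lies — I would handle the \emph{boundary}. The region $|\pi|\ge 1$ is forced to look like a peak section near $\partial V$, so the construction must be arranged so that $\pi$ is genuinely large (in the $e^{-k\phi}$-weighted sense) only near $\partial V$ and decays into the interior, with $\pi^{-1}(D^2)$ therefore a neighborhood of any prescribed compact $K\subset V^\circ$; this requires choosing the peak location and the cutoffs so that the sublevel set $\{|\pi|\le 1\}$ swallows $K$, which is where one uses that $k$ is large. Simultaneously the contact transversality on $\partial V\cap\{|\pi|\le 1\}$ must be achieved without destroying the interior transversality — this is a quantitative transversality statement for the restriction to the hypersurface $\partial V$, and the technical heart is to run Donaldson's scheme relative to the boundary, ensuring the perturbations used near $\partial V$ are compatible with (indeed constrained by) the peak-section behavior dictated on $\{|\pi|\ge1\}$. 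I expect the main obstacle to be precisely this matching: reconciling the rigid asymptotic form $d\log\pi \approx k\,d'\phi$ on $|\pi|\ge 1$ with the flexibility needed to achieve quantitative transversality on $|\pi|\le 1$ and along $\xi$, i.e. controlling the transition region $|\pi|\approx 1$ where both regimes overlap. Once all estimates are in place for a fixed large $k$, the conclusions of the theorem read off directly, and Theorems~\ref{steinexistence} and the deformation statement follow by letting $k\to\infty$ and invoking the smoothing construction of Definition~\ref{steinLF}.
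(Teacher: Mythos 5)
Your proposal misidentifies the structure of the argument and, more importantly, misconstrues what the first bullet point of the theorem actually is. In the paper, Theorem~\ref{refinedsteinexistence} is a short corollary of Theorem~\ref{quantitativesteinexistence}: one takes the holomorphic $f$ produced there (with $|f|\le e^{\frac12 k\phi}$ on $\{\phi\le1\}$ and $|f|+k^{-1/2}|df\rst\xi|>\eta$ on $\partial V$) and simply sets $\pi:=\eta^{-1}f$. The entire Donaldson apparatus — peak sections, the coloring/globalization scheme, the quantitative Sard lemma — lives in the proof of Theorem~\ref{quantitativesteinexistence}, not here. Your proposal instead re-sketches that whole program inside the proof of this corollary, so it never reaches the one genuine step that is needed.

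That step is the observation you are missing, and it dissolves what you identify as the ``main obstacle.'' You treat the asymptotic $d\log\pi = k\,d'\phi + O(k^{1/2})$ on $\{|\pi|\ge1\}$ as a rigid constraint that must be \emph{achieved} and reconciled with the transversality requirement in the transition region $|\pi|\approx 1$. In fact it is \emph{automatic}: for any holomorphic $f$ with $|f|\le e^{\frac12 k\phi}$ on $\{\phi\le1\}$, one has $|df(p)-k\,f(p)\,d'\phi(p)|=O(k^{1/2}e^{\frac12 k\phi(p)})$ on $V$. (Proof: near $p$ pick a holomorphic $u$ with $\Re u=\phi+O(d(p,\cdot)^2)$; then $f\cdot e^{-\frac12 ku}=O(1)$ on a $k^{-1/2}$-ball around $p$, so by the Cauchy estimate $d(f e^{-\frac12 ku})(p)=O(k^{1/2})$, and expanding gives the claim since $du(p)=2d'\phi(p)$.) Dividing by $|f(p)|\ge\eta$ on $\{|\pi|\ge1\}$ gives the first bullet with no extra work, and there is no tension whatsoever with the transversality estimate on $\{|\pi|\le1\}\cap\partial V$, which comes directly from $|f|+k^{-1/2}|df\rst\xi|>\eta$. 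Likewise the extra clause about $\pi^{-1}(D^2)\supseteq K$ is free from the decay bound, since $\phi\le-\delta<0$ on $K$ forces $|f|\le e^{-\frac12 k\delta}<\eta$ once $k$ is large.

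Finally, note that nondegeneracy of critical points and distinctness of critical values are \emph{not} part of Theorem~\ref{refinedsteinexistence}; that generic perturbation is carried out in the passage from this theorem to Theorem~\ref{steinexistence}, using density of $d\OO_{\ol V}$-jets (Cartan's theorems). Folding it into the present proof, as you do, conflates two separate steps and adds complexity that is not needed here.
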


Theorem \ref{steinexistence} follows from Theorem 
\ref{refinedsteinexistence} by smoothing out the deformation of Stein domains $\{ \pi^{-1}
(D^2_r) \}_{1 \le r < \infty}$ (this argument is given in detail in 
\S\ref{steinLFsec}). Theorem \ref{refinedsteinexistence} is a corollary of the 
following, which is the main technical result of the paper.

\begin{theorem} \label{quantitativesteinexistence}
Let $\ol V$ be a Stein manifold, equipped with a smooth exhausting $J$-convex
function $\phi: \ol V \to \R$. For every sufficiently large real number $k$,
there exists a holomorphic function $f: \ol V \to \C$ such that:
\begin{itemize}
\item
$|f(p)| \le e^{\frac 12k\phi(p)}$ for $p \in \{\phi \le 1\}$.
\item
$|f(p)| + k^{-1/2} |df(p) \rst \xi| > \eta$ for $p \in \{\phi = 0\}$ ($df$
measured in the metric induced by $\phi$).
\end{itemize}
where $\xi$ denotes the Levi distribution on $\{\phi = 0\} \subseteq \ol V$, and
$\eta>0$ is a constant depending only on the dimension of $\ol V$.
\end{theorem}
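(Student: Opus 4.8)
The plan is to construct $f$ by Donaldson-style quantitative transversality applied to an explicit family of "peak sections" concentrated near points of $\{\phi = 0\}$. The underlying geometry is that of the trivial bundle over $\ol V$ with metric weighted by $e^{-k\phi}$: after rescaling lengths by $\sqrt k$ near any point $p_0$, a holomorphic function of the form $e^{k\langle z, \bar z_0\rangle + \cdots}$ (in $J$-convex Bochner coordinates at $p_0$ adapted to $\phi$) looks like a Gaussian peak section of the positive line bundle in Donaldson's theory, because the curvature form $id'd''\phi$ plays the role of the Kähler form. So first I would set up, for each $p_0 \in \ol V$, a holomorphic "peak function" $s_{p_0}$ with $|s_{p_0}|\le e^{\frac12 k\phi}$ everywhere on $\{\phi\le 1\}$, with $|s_{p_0}(p_0)|$ bounded below by a dimensional constant on $\{\phi=0\}$, and with Gaussian decay $|s_{p_0}(p)| \lesssim e^{\frac12 k\phi(p)} e^{-c k\,\mathrm{dist}(p,p_0)^2}$ once $\sqrt k\,\mathrm{dist}(p,p_0) \ge 1$. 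Such functions exist on a Stein manifold: one can take them to be suitable entire functions built from the $J$-convex coordinates, or obtain them by solving $\bar\partial$ with Hörmander's $L^2$ estimates (with weight $k\phi$ plus a logarithmic bump at $p_0$) to correct a compactly-supported Gaussian bump; the exhausting $J$-convexity of $\phi$ is exactly the hypothesis Hörmander's theorem needs. The derivative bound $|df_{p_0}(p_0)\restriction\xi|$ is automatically comparable to $\sqrt k$ by the same local model.

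Next I would run the globalization. Fix a $\sqrt k$-dense (in the rescaled metric) finite subset $\{p_j\}$ of a neighborhood of $\{\phi = 0\}$ inside $\{\phi\le 1\}$, with controlled overlaps, and look for $f = \sum_j w_j\, s_{p_j}$ with coefficients $w_j$ of modulus $\le 1$. For any such choice, the weighted bound $|f|\le C e^{\frac12 k\phi}$ on $\{\phi\le 1\}$ is immediate from the Gaussian decay and the bounded-overlap property (the constant $C$ is dimensional, independent of $k$); to get the sharp constant $1$ rather than $C$ one rescales: replace $f$ by $C^{-1} f$, which only costs a dimensional factor in the lower bound $\eta$, exactly as the statement allows. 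The real content is arranging
\[
|f(p)| + k^{-1/2}\,|df(p)\restriction\xi| > \eta \qquad\text{for all } p\in\{\phi=0\}.
\]
This is a quantitative-transversality statement for the section $f$ of the trivial line bundle, restricted to the contact manifold $\{\phi=0\}$: we want $f$ to be uniformly bounded away from $0$ in the $C^1$ sense along $\xi$. I would prove it by Donaldson's inductive refinement: starting from $f^{(0)} = s_{p_1}$ (or the zero section), one repeatedly adds a small multiple $\lambda\, s_{p_j}$ of a peak section to push the value at a given "bad" point $p_j$ (where $|f|+k^{-1/2}|df\restriction\xi|$ is small) up past a threshold, using that near $p_j$ the function $s_{p_j}$ genuinely dominates in $C^1$, while the Gaussian tails of the already-added terms contribute only lower-order perturbations at the other points. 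The bookkeeping — that each step spoils the estimate at other points by at most a geometrically decaying amount, so that finitely many steps suffice and the final $\eta$ is dimensional — is Donaldson's; the only thing to check is that his "quantitative Sard / transversality for one section" lemma applies here, which it does because the target is one-dimensional and the peak sections give the required concentration.

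The main obstacle, and where I would spend the most care, is the transition between the two regimes at $|\pi|=1$, i.e. making the single function $f$ simultaneously satisfy the weighted sup-bound on $\{\phi\le1\}$ and the transversality-along-$\xi$ bound on $\{\phi=0\}$, with constants uniform in $k$. Concretely: the peak sections must be supported (up to Gaussian error) near $\{\phi = 0\}$ so that they do not violate $|f|\le e^{\frac12 k\phi}$ deep inside (where $\phi$ is very negative and the bound $e^{\frac12 k\phi}$ is tiny), yet they must have enough $C^1$-mass tangent to $\xi$ on $\{\phi=0\}$ to achieve transversality there. The correct statement is that the Gaussian peak at $p_0\in\{\phi=0\}$, measured in the $e^{-k\phi}$-weighted norm, decays like $e^{-ck\,\mathrm{dist}^2}$ in \emph{all} directions, in particular into the region $\phi<0$; combined with $\mathrm{dist}(p,\{\phi=0\})^2 \gtrsim \phi(p)^2$ and $|\phi|\le 1$ this gives $|s_{p_0}(p)|\le e^{\frac12 k\phi(p)}$ once we renormalize, so the two conditions are in fact compatible. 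Verifying this decay — i.e. that the Hörmander correction term inherits the Gaussian estimate of the bump it corrects, uniformly in $p_0$ and $k$ — together with the uniform choice of $J$-convex coordinates along the (non-compact!) hypersurface $\{\phi=0\}$, is the technical heart; everything else is Donaldson's machinery run verbatim.
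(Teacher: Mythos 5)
Your proposal follows the paper's high-level strategy: peak sections via H\"ormander's $L^2$-estimates, then a Donaldson-style iterative globalization. The intuition you give for why Gaussian decay of a peak section at $p_0\in\{\phi=0\}$ is compatible with the weighted bound $|f|\le e^{\frac12 k\phi}$ is correct and is exactly what the paper arranges by choosing a local holomorphic $u$ with $\phi-\frac34 d^2\le\Re u\le\phi-\frac14 d^2$ (Lemma~\ref{localJconvex}) and taking $\sigma=e^{\frac12 ku}$. However, you defer the two real technical ingredients to ``Donaldson's machinery run verbatim,'' and in both cases this is a genuine gap rather than a safe deferral.

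First, the coloring. Your iteration---pick a bad point, add a peak section, note that the tails perturb other points only slightly---run literally over all $N=O_{(\ol V,\phi)}(k^{2n-1})$ centers loses a fixed multiplicative factor per step and would drive $\eta\to 0$ super-exponentially in $k$; ``finitely many steps suffice'' is not true in a $k$-uniform sense. Donaldson's fix, used in the paper, is to color the points so that same-colored points are $D$-separated in the rescaled metric and to process an entire color at once; then the induction has $M=O(D^{2n-1})$ steps with $D$ (hence $M$) $k$-independent, and the final $\eta$ depends only on the dimension. One must also calibrate the recursion $\eta_j=\eta_{j-1}|\log\eta_{j-1}|^{-\p}$ against the tail loss $e^{-cD^2}$, which is where the bound $M=O(D^{2n-1})$ enters. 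Second, the quantitative-transversality lemma is not routine ``because the target is one-dimensional.'' The relevant subtlety---credited to Mohsen in the paper---is that transversality is required only over the real hypersurface $\{\phi=0\}$, so one needs a quantitative Sard theorem for \emph{real} polynomial maps: the map $F$ sending $z$ to the unique $w$ with $|f_w(z)|+|df_w(z)\rst\xi|=0$, restricted to $B\cap\{\Re z_1=0\}$, is a real-analytic map from a $(2n-1)$-real-dimensional set into $\R^{2n}$, and one bounds the tubular neighborhood of its image via Wongkew's estimate (Lemma~\ref{wongkewtubularneighborhood}). The paper further uses Auroux's simplification: it perturbs not by a single constant (one peak section per center, the harder Donaldson/Mohsen route your ``$\lambda s_{p_j}$'' implicitly commits to) but by $w_0+w_2z_2+\cdots+w_nz_n$ using $n+1$ localized sections $\tilde\sigma,\tilde\sigma_2,\ldots,\tilde\sigma_n$, so that after polynomial approximation the image of $F$ lies in an algebraic hypersurface (Lemma~\ref{imageinhypersurface}) and the volume estimate applies directly. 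Neither of these points is addressed in your proposal, and neither is ``verbatim'' Donaldson.
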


To prove Theorem \ref{refinedsteinexistence} (for $V := \{\phi \le 0\}$) from 
Theorem \ref{quantitativesteinexistence}, we take $\pi$ to be (a small 
perturbation of) $\eta^{-1} \cdot f$, which works once $k$ is sufficiently large 
(the details of this argument are given in \S\ref{steinLFsec}). To prove Theorem 
\ref{quantitativesteinexistence}, we use methods introduced by Donaldson \cite
{donaldsonI} (this proof occupies \S\ref{complexgeosec}--\ref{qtranssec}). A
closely related result was obtained by Mohsen \cite{mohsenpreprint} also using
Donaldson's techniques.

\subsection{Lefschetz fibrations on Weinstein domains}

Next, we turn to our result for Weinstein domains.

\begin{definition}
A \emph{Weinstein domain} is a compact symplectic manifold with boundary $(W,
\omega)$ equipped with a $1$-form $\lambda$ satisfying $d\lambda = \omega$ and a
Morse function $\phi: W \to \R$ which has $\partial W = \{\phi = 0\}$ as a
regular level set and for which $X_\lambda$ (defined by $\omega(X_\lambda,\cdot)
= \lambda$) is gradient-like.
\end{definition}

\begin{definition}
An \emph{abstract Weinstein Lefschetz fibration} is a tuple
\begin{equation*}
W = (W_0; L_1, \ldots, L_m)
\end{equation*}
consisting of a Weinstein domain $W_0^{2n-2}$ (the ``central
fiber'') along with a finite sequence of exact parameterized\footnote
{Parameterized shall mean equipped with a diffeomorphism $S^{n-1} \xrightarrow
\sim L$ defined up to precomposition with elements of $O(n)$.}
Lagrangian spheres $L_1, \ldots, L_m \subseteq W_0$ (the ``vanishing cycles'').

From any abstract Weinstein Lefschetz fibration $W = (W_0; L_1, \ldots, L_m)$, 
we may construct a Weinstein domain $|W|$ (its ``total space'') by attaching 
critical Weinstein handles to the stabilization $W_0 \times D^2$ along
Legendrians $\Lambda_j \subseteq W_0 \times S^1 \subseteq \partial (W_0 \times 
D^2)$ near $2\pi j/m \in S^1$ obtained by lifting the exact Lagrangians $L_j$.  
We give this construction in detail in \S\ref{weinsteinLFsec}.
\end{definition}

We will prove the following existence result.

\begin{theorem} \label{weinsteinexistence}
Let $W$ be a Weinstein domain.  There exists an abstract Weinstein Lefschetz 
fibration $W' = (W_0; L_1, \ldots, L_m)$ whose total space $\left|W'\right|$ is 
deformation equivalent to $W$.
\end{theorem}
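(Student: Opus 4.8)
The plan is to reduce to the Stein case already handled by Theorem \ref{steinexistence}, and then to extract an abstract Weinstein Lefschetz fibration from a Stein Lefschetz fibration. First, by the equivalence between Weinstein and Stein structures established in \cite{cieliebakeliashberg} (building on \cite{eliashbergstein}), every Weinstein domain $W$ is deformation equivalent to the Weinstein domain underlying some Stein domain $V$ — that is, $V$ equipped with the Liouville form and gradient-like Morse function canonically associated to a suitable Morse $J$-convex function $\phi$. It therefore suffices to produce, for each Stein domain $V$, an abstract Weinstein Lefschetz fibration whose total space is deformation equivalent, as a Weinstein domain, to $V$.

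Second, apply Theorem \ref{steinexistence} to obtain a Stein Lefschetz fibration $\pi \colon V' \to D^2$ with $(V')^\sm$ deformation equivalent to $V$. I then claim that every Stein Lefschetz fibration $\pi \colon V' \to D^2$ determines (up to the auxiliary choices below) an abstract Weinstein Lefschetz fibration $W' = (W_0; L_1, \ldots, L_m)$ with $\left|W'\right|$ deformation equivalent to $(V')^\sm$. To build $W'$: the $J$-convex function $\phi$ of Definition \ref{steinLF} makes each smooth fiber a Stein, hence Weinstein, domain; take $W_0$ to be the fiber over the basepoint $1 \in \partial D^2$. Choose an ordering of the (distinct) critical values together with pairwise disjoint embedded paths in $D^2$ joining each critical value to $1$ and meeting $\partial D^2$ only at $1$; using symplectic parallel transport with respect to the Kähler form (rendered exact by the Liouville form), transport the Lagrangian vanishing sphere at each critical point along its path into $W_0$ to obtain exact parameterized Lagrangian spheres $L_1, \ldots, L_m$. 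Set $W' = (W_0; L_1, \ldots, L_m)$.

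Third, identify $\left|W'\right|$ with $(V')^\sm$ up to deformation. The decomposition of $D^2$ into a small disk around $1$ together with thin neighborhoods of the chosen paths induces a decomposition of a Weinstein model for $(V')^\sm$ into the stabilization $W_0 \times D^2$ and one region per critical point; over each such region the complex Morse normal form $(z_1, \ldots, z_n) \mapsto \pi(p) + \sum_{i=1}^n z_i^2$ exhibits the contribution to the Weinstein structure as that of a critical Weinstein handle attached along the Legendrian lift of $L_j$ to $W_0 \times S^1$. This is exactly the construction of $\left|W'\right|$ recalled in \S\ref{weinsteinLFsec}, and is the Weinstein refinement of the classical fact that a Lefschetz fibration is assembled from (fiber $\times$ disk) by attaching one $n$-handle per critical point along the vanishing cycle. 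One must verify that the exhausting $J$-convex function, the Liouville form, and the gradient-like Morse function can be chosen compatibly with this decomposition, and that the resulting identifications go through in $1$-parameter families, so as to yield deformation equivalence rather than mere diffeomorphism.

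I expect the technical heart to be this third step: producing on the total space of the Stein Lefschetz fibration a Weinstein structure that simultaneously restricts to the chosen Weinstein structure on $W_0$, contains the stabilization $W_0 \times D^2$ as a Weinstein subdomain, and near each critical point agrees — after a Weinstein homotopy — with the standard critical-handle model attached along the Legendrian lift of the parallel-transported vanishing cycle. Controlling the primitives so that the vanishing spheres are genuinely exact Lagrangians, keeping the Liouville field gradient-like throughout, and gluing the local handle models to the global $J$-convex model of $(V')^\sm$, all coherently in families, is where the real work lies; by contrast, the reduction to the Stein case and the extraction of the combinatorial data $(W_0; L_1, \ldots, L_m)$ are comparatively formal.
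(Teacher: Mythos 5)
Your overall reduction is exactly the paper's: (i) by Cieliebak--Eliashberg, replace $W$ by $\W(V)$ for a Stein domain $V$; (ii) apply Theorem \ref{steinexistence} to get a Stein Lefschetz fibration $\pi\colon V'\to D^2$ with $(V')^\sm$ deformation equivalent to $V$; (iii) show that a Stein Lefschetz fibration gives rise to an abstract Weinstein Lefschetz fibration with the right total space. You also correctly identify (iii) --- in the paper this is Proposition \ref{steinLFtoweinsteinLF} --- as where all the content lies. But you essentially stop there and name the difficulties rather than resolve them, and the route you sketch for (iii) is not the one the paper takes and has real gaps.

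First, you invoke symplectic parallel transport without addressing that the symplectic connection associated to $\omega_{g(\phi)}$ is not automatically complete on $V\setminus\partial_h V$: the horizontal lift of a path in $D^2$ may escape to the horizontal boundary before reaching its endpoint. The paper's Lemma \ref{makecomplete} is devoted precisely to producing a function $g$ (with $g'>0$, $g''>0$, $g\to\infty$) making the connection complete; your parallel-transport construction of the $L_j$ is not well-defined without this. Second, your plan is to cut $D^2$ into a small disk around a boundary basepoint plus thin tubular neighborhoods of paths, and to use the complex Morse normal form to recognize each critical region as a Weinstein handle. The paper instead runs a degeneration argument: it introduces the $J$-convex function $\eps g(\phi) + h(|\pi|/\delta)$, shows its sublevel set is deformation equivalent to $V^\sm$, analyzes the stable manifolds of its critical points and shows they converge to the Lefschetz thimbles as $\eps\to 0$, and then takes $\delta\to 0$ to contract to the product model. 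The virtue of that route is that it produces an honest $1$-parameter family of Weinstein domains directly (what you flag as the sticking point --- getting deformation equivalence rather than diffeomorphism --- is handled for free), whereas a direct decomposition-by-regions argument would force you to globally match Liouville forms and Lyapunov functions across region boundaries, which is exactly the delicate gluing you admit you have not done. Third, even granting the decomposition, you would still need to know that an elementary Weinstein cobordism with prescribed attaching Legendrians and framings is the same, up to contractible choice, as the model Weinstein handle attachment of Definition \ref{totalspacedef}; this is the content of Lemma \ref{elementarycobordismisweinsteinhandle}, a nontrivial and independently stated ingredient that your sketch does not surface. (Minor: the paper takes $W_0$ to be the fiber over the center $0\in D^2$ and transports along radial paths, rather than a boundary fiber over $1\in\partial D^2$ as you propose; this is a cosmetic difference, but the radial setup is what feeds cleanly into the degeneration $\delta\to 0$.)
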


Deformation is meant in the sense of a $1$-parameter family of Weinstein 
domains, but where the requirement that $\phi$ be Morse is relaxed to allow 
birth death critical points. Theorem \ref{weinsteinexistence} is deduced from 
Theorem \ref{steinexistence} using the existence theorem for Stein structures 
on Weinstein domains proved by Cieliebak--Eliashberg \cite[Theorem 1.1(a)]
{cieliebakeliashberg}. The main step is thus to show that a Stein Lefschetz
fibration $\pi: V \to D^2$ naturally gives rise to an abstract Weinstein
Lefschetz fibration whose total space is deformation equivalent to $V^\sm$ (the 
details of this argument are given in \S\ref{weinsteinLFsec}).

In current work in progress, we hope to apply Donaldson's techniques directly 
in the Weinstein setting to produce on any Weinstein domain $W$ an approximately
holomorphic function $f : W \to \C$ satisfying conditions similar to those in 
Theorem \ref{quantitativesteinexistence}, and thus give a proof of Theorem 
\ref{weinsteinexistence} which does not appeal to the existence of a compatible 
Stein structure.

Given Theorem \ref{weinsteinexistence}, it is natural to ask whether every 
deformation equivalence between the total spaces of two abstract Weinstein 
Lefschetz fibrations is induced by a finite sequence of moves of some simple 
type. Specifically, applying any of the following operations to an abstract 
Weinstein Lefschetz fibration preserves the total space up to canonical 
deformation equivalence, and it is natural to ask whether they are enough.
\begin{itemize}
\item(Deformation)
Simultaneous Weinstein deformation of $W_0$ and exact Lagrangian isotopy of
$(L_1, \ldots, L_m)$.
\item(Cyclic permutation)
Replace $(L_1, \ldots, L_m)$ with $(L_2, \ldots, L_m, L_1)$.
\item(Hurwitz moves)
Let $\tau_L$ denote the symplectic Dehn twist around $L$, and replace $(L_1,
\ldots, L_m)$ with either $(L_2, \tau_{L_2} L_1, L_3, \ldots, L_m)$ or
$(\tau_{L_1}^{-1} L_2, L_1, L_3, \ldots, L_m)$.
\item(Stabilization)
For a parameterized Lagrangian disk $D^{n-1} \hookrightarrow W_0$ with 
Legendrian boundary $S^{n-2} = \partial D^{n-1} \hookrightarrow \partial W_0$ 
such that $0 = [\lambda_0] \in H^1 (D^{n-1}, \partial D^{n-1})$, replace $W_0$ 
with $\tilde W_0$, obtained by attaching a Weinstein handle to $W_0$ along 
$\partial D^{n-1}$, and replace $(L_1, \ldots, L_m)$ with $(\tilde L, L_1, 
\ldots, L_m)$, where $\tilde L \subseteq \tilde W_0$ is obtained by gluing 
together $D^{n-1}$ and the core of the handle.
\end{itemize}
It would be very interesting if the methods of this paper could be brought to 
bear on this problem as well.

\begin{remark}
The reader is likely to be familiar with more geometric notions of symplectic 
Lefschetz fibrations (e.g., as in Seidel \cite[\S 15d]{seidel} or 
Bourgeois--Ekholm--Eliashberg \cite[\S 8.1]{bourgeoisekholmeliashberg} and the 
references therein), and may prefer these to the notion of an abstract 
Weinstein Lefschetz fibration used to state Theorem \ref{weinsteinexistence}. 
We believe, though, that the reader wishing to construct a symplectic Lefschetz 
fibration in their preferred setup with the same total space as a given 
abstract Weinstein Lefschetz fibration will have no trouble doing so (e.g., see 
Seidel \cite[\S 16e]{seidel}).
\end{remark}

Seidel \cite{seidel,seidelainftysubalg,seidelshhh,seidelainftystruct} has 
developed powerful methods for calculations in and of Fukaya categories coming 
from Lefschetz fibrations, in particular relating the Fukaya category of the 
total space to the vanishing cycles and the Fukaya category of the central 
fiber.  Our existence result shows that these methods are applicable to any 
Weinstein domain.  We should point out, however, that, while our proof of 
existence of Lefschetz fibrations is in principle effective, it does not 
immediately lead to any practical way of computing a Lefschetz presentation of 
a given Weinstein manifold.

\subsection{Remarks about the proof}

We outline briefly the proof of Theorem \ref{quantitativesteinexistence} (the 
main technical result of the paper), which occupies \S\ref{complexgeosec}--\ref
{qtranssec}. As mentioned earlier, the proof is an application of Donaldson's
quantitative transversality techniques, first used to construct symplectic
divisors inside closed symplectic manifolds \cite{donaldsonI} (somewhat similar 
ideas appeared earlier in Cheeger--Gromov \cite{cheegergromov}).

The $J$-convex function $\phi: \ol V \to\R$ determines a positive line bundle
$L$ on $\ol V$. We consider the high tensor powers $L^k$ of this positive line
bundle. Using $L^2$-methods of Hörmander \cite{hormander} and
Andreotti--Vesentini \cite{andreottivesentini}, one may construct ``peak
sections'' of $L^k$, that is, holomorphic sections $s: \ol V \to L^k$ which are 
``concentrated'' over the ball of radius $k^{-1/2}$ centered at any given point 
$p_0 \in V := \{\phi \le 0\}$ and have decay $|s(p)| = O (e^{-\eps \cdot k \cdot
d(p,p_0)^2})$ for $p \in \{\phi \le 1\}$.

Donaldson introduced a remarkable method to, given enough localized holomorphic 
sections, construct a linear combination $s: \ol V \to L^k$ which satisfies,
quantitatively, any given holomorphic transversality condition which is generic. 
The key technical ingredient for Donaldson's construction is a suitably 
quantitative version of Sard's theorem, and this step was simplified 
considerably by Auroux \cite{aurouxremark}. The function $f$ asserted to exist 
in Theorem \ref{quantitativesteinexistence} is simply the quotient of such a 
quantitatively transverse section $s: \ol V \to L^k$ by a certain tautological 
section $\text{``$1$''}: \ol V \to L^k$.

We take advantage of the fact that we are in the holomorphic category by working
with genuinely holomorphic functions, instead of the approximately holomorphic
functions which are the standard context of Donaldson's techniques. This allows 
us to use simplified arguments at various points in the proof, and this is the
reason for our passage from the Weinstein setting to the Stein setting. It is
not clear whether one should expect to be able to generalize our arguments to
apply directly to Weinstein manifolds.

Note that in most applications of quantitative transversality techniques in 
symplectic/contact geometry, the result in the integrable case requires 
only generic transversality, and the passage from integrable to non-integrable 
$J$ is what necessitates quantitative transversality.  Here, quantitative 
transversality is needed in both the integrable and non-integrable settings 
(although indeed, one would need more quantitative transversality in the 
non-integrable case).

Besides Donaldson's original paper \cite{donaldsonI}, which is the best place to
first learn the methods introduced there, let us mention a few other papers
where approximately holomorphic techniques have been used to obtain results
similar to Theorem \ref{weinsteinexistence}. In addition to constructing
symplectic divisors \cite{donaldsonI}, Donaldson also constructed Lefschetz
pencils on closed symplectic manifolds \cite{donaldsonII}. Auroux \cite{aurouxI,
aurouxII} further generalized and refined Donaldson's techniques to
$1$-parameter families of sections and to high twists $E \otimes L^k$ of a given
Hermitian vector bundle $E$. In particular, he showed that Donaldson's
symplectic divisors are all isotopic for fixed sufficiently large $k$, and that 
symplectic four-manifolds can be realized as branched coverings of $\C P^2$.
Ibort--Martínez-Torres--Presas \cite{ibortmartineztorrespresas} obtained 
analogues for contact manifolds of Donaldson's and Auroux's results, and these
were used in \cite{giroux} to construct open books on contact manifolds
in any dimension. 
Mohsen \cite{mohsenthesis,mohsenpreprint} extended the techniques of Donaldson 
and Auroux to construct sections whose restrictions to a given submanifold 
satisfy certain quantitative transversality conditions. He also showed that 
this result implies both the uniqueness theorem of Auroux on symplectic 
divisors and the contact theorem of Ibort--Martínez--Presas.  His main 
observation is that the quantitative Sard theorem applies to real (not just to 
complex) polynomials. This plays an important role in the present work; it
makes it possible to obtain quantitative transversality for the restriction of 
a holomorphic section to a real hypersurface.

\subsection{Acknowledgements}

We wish to thank Yasha Eliashberg, Jean-Paul Mohsen, and Paul Seidel for 
helpful discussions and encouragement.  We thank Sylvain Courte for pointing 
out an error in an earlier version of the proof of Lemma \ref
{elementarycobordismisweinsteinhandle}, and we thank the referee for their 
careful reading.

This collaboration started after E.\,G.\ visited Stanford University in
April 2014, and he is very grateful to the Department of Mathematics for its 
hospitality and financial support.
J.\,P.\ subsequently visited the École normale supérieure de Lyon in July 
2014, and he thanks the Unité de mathématiques pures et appliquées for its 
hospitality and financial support.  This work was supported by the LABEX 
MILYON (ANR--10--LABX--0070) of Université de Lyon, within the program 
``Investissements d'Avenir'' (ANR--11--IDEX--0007) operated by the French 
National Research Agency (ANR).  J.\,P.\ was partially supported by an NSF 
Graduate Research Fellowship under grant number DGE--1147470.

\section{Review of complex geometry} \label{complexgeosec}

We now provide for the reader a review of some classical results in complex 
geometry which we need.  Our specific target is the solution of the 
$d''$-operator on Stein manifolds via the $L^2$ methods of H\"ormander 
\cite{hormander} and Andreotti--Vesentini \cite{andreottivesentini}.  This will 
be used later to construct the localized ``peak sections'' necessary for 
Donaldson's construction.  The reader may refer to 
\cite[Proposition 34]{donaldsonI} for an analogous discussion in the case of 
compact Kähler manifolds.

\subsection{Kähler geometry}

For a complex vector bundle $E$ with connection $d$ over a complex manifold $M$,
we denote by $d': E \otimes \Omega^{p,q} \to E \otimes \Omega^{p+1,q}$ and $d'':
E \otimes \Omega^{p,q} \to E \otimes \Omega^{p,q+1}$ the complex linear and 
complex conjugate linear parts of the exterior derivative $d: E \otimes
\Omega^k \to E \otimes \Omega^{k+1}$. When $M$ is equipped with a Kähler metric 
and $E$ is equipped with a Hermitian metric, we let $d'^*$ and $d''^*$
denote the formal adjoints of $d'$ and $d''$ respectively, and we let $\Delta'
:= d'^* d' + d' d'^*$ and $\Delta'' := d''^* d'' + d'' d''^*$ denote
the corresponding Laplacians.

Recall that on any holomorphic vector bundle with a Hermitian metric, there
exists a unique connection compatible with the metric and the holomorphic
structure, called the Chern connection.

\begin{lemma}[Bochner--Kodaira--Nakano identity]
Let $E$ be a holomorphic Hermitian vector bundle over a Kähler manifold. Then we
have
\begin{equation} \label{BKN}
   \Delta''_E = \Delta'_E + [i\Theta(E), \Lambda]
\end{equation}
where $\Theta(E)$ is the curvature of $E$ and $\Lambda$ is the adjoint of $L :=
\cdot \wedge \omega$.
\end{lemma}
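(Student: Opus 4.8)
The plan is to reduce \eqref{BKN} to the \emph{Kähler identities} for the Chern connection $d = d' + d''$ on $E$, which on a Kähler manifold read
\[
  [d'', \Lambda] = i\, d'^*, \qquad [d', \Lambda] = -i\, d''^*
\]
(equivalently $d''^* = i\,[d', \Lambda]$ and $d'^* = -i\,[d'', \Lambda]$), where $[\cdot,\cdot]$ denotes the commutator of the algebraic operator $\Lambda$ with a first-order operator. Once these are in hand, \eqref{BKN} is purely formal, so the substance of the argument — and the place I expect the only real work to lie — is the proof of the Kähler identities themselves. This is classical; I would be recalling the standard argument rather than inventing one.

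To prove the Kähler identities I would argue pointwise: each is an identity of differential operators of order at most one on sections of $E \otimes \Omega^{p,q}$, and such an operator is determined at a point $p$ by the values at $p$ of its coefficients and of their first derivatives. So I would fix $p$, choose holomorphic coordinates centered at $p$ which are Kähler-normal — so that $\omega = \omega_{\std} + O(|z|^2)$ and hence $\Lambda = \Lambda_{\std} + O(|z|^2)$ — and a local holomorphic frame of $E$ in which the Chern connection form and the deviation of the Hermitian metric from the identity both vanish to first order at $p$. In these trivializations all the relevant data of $d', d'', d'^*, d''^*, \Lambda$ agree at $p$ with those of the trivial line bundle on $\C^n$ equipped with the standard Kähler form, so it suffices to verify the identities in that flat model. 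The flat case is the classical explicit computation with the generators $dz_j \wedge (\cdot)$, the contractions $\iota_{\partial/\partial z_j}$, and the operators $\partial/\partial z_j$, $\partial/\partial\bar z_j$ (one may also phrase it as the algebraic identity $[\Lambda, \bar\partial] = -i(\partial^* + \bar\tau)$, whose torsion term $\bar\tau$ vanishes because $\omega$ is constant there). This reduction together with the flat computation is the main obstacle; the chief pitfall is keeping the sign conventions consistent, since several coexist in the literature.

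Granting the Kähler identities, the derivation of \eqref{BKN} is short. For the Chern connection $(d')^2 = (d'')^2 = 0$, since its curvature $d^2$ is of type $(1,1)$; hence $d'd'' + d''d' = d^2 = \Theta(E)$. Using $d''^* = i[d',\Lambda]$ and expanding,
\[
  \Delta''_E = d''^* d'' + d'' d''^* = i\bigl( d'\Lambda d'' - \Lambda d' d'' + d'' d' \Lambda - d'' \Lambda d' \bigr),
\]
while $d'^* = -i[d'',\Lambda]$ gives
\[
  \Delta'_E = d'^* d' + d' d'^* = -i\bigl( d''\Lambda d' - \Lambda d'' d' + d' d'' \Lambda - d'\Lambda d'' \bigr).
\]
Subtracting, the terms $d'\Lambda d''$ and $d''\Lambda d'$ cancel and the rest regroups as
\[
  \Delta''_E - \Delta'_E = i\bigl( (d''d' + d'd'')\Lambda - \Lambda(d'd'' + d''d') \bigr) = i\,[\Theta(E), \Lambda] = [\,i\Theta(E),\, \Lambda\,],
\]
which is \eqref{BKN}. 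Equivalently one may organize this via the graded Jacobi identity: $\Delta''_E = i[d'', [d', \Lambda]] = i[[d'',d'],\Lambda] - i[d', [d'',\Lambda]] = i[\Theta(E),\Lambda] + \Delta'_E$, using $[d'',d'] = \Theta(E)$ and $[d'',\Lambda] = i\,d'^*$.
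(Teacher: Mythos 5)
The paper states this lemma without proof: it is a classical result attributed to Bochner--Kodaira--Nakano, and the paper simply invokes it (as one would cite, say, Demailly or Griffiths--Harris), so there is no in-paper proof to compare your argument against. Your sketch is a correct rendition of the standard textbook derivation: prove the Kähler identities $[d'',\Lambda]=i\,d'^{*}$, $[d',\Lambda]=-i\,d''^{*}$ by observing that each side is a first-order operator whose data at a point match those of the flat model after passing to Kähler-normal coordinates and a normal holomorphic frame of $E$, and then obtain \eqref{BKN} formally using $(d')^{2}=(d'')^{2}=0$ and $d'd''+d''d'=\Theta(E)$. Your sign conventions are internally consistent, your expansion-and-cancellation computation checks out, and the alternative packaging via the graded Jacobi identity $\Delta''_E = i[d'',[d',\Lambda]] = i[\Theta(E),\Lambda] + \Delta'_E$ is the cleanest way to see it.
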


For a holomorphic Hermitian vector bundle $E$ over a Kähler manifold, there is
an induced Hermitian metric on $E \otimes \Omega^{0,q}$.  The operator $d': E
\otimes \Omega^{0,q} \to E \otimes \Omega^{1,q} = E \otimes \Omega^{0,q} \otimes
\Omega^{1,0}$ further equips $E \otimes \Omega^{0,q}$ with an anti-holomorphic
structure. Together these induce a Chern connection on $E \otimes \Omega^{0,q}$.
We denote this connection by $\nabla = \nabla' + \nabla''$, where $\nabla' = 
d'$, and we denote the corresponding Laplacians by $\Box'$ and $\Box''$, where 
$\Box' = \Delta'$. Applying \eqref{BKN} to $E \otimes \Omega^{0,q}$ gives
\begin{equation} \label{BKNspecial}
   \Box''_{E \otimes \Omega^{0,q}} = \Box'_{E \otimes \Omega^{0,q}}
 + [i\Theta (E \otimes \Omega^{0,q}), \Lambda] .
\end{equation}
Now since $\Box'_{E \otimes \Omega^{0,q}} = \Delta'_E$ operating on $E \otimes
\Omega^{0,q}$, we may combine \eqref{BKN} and \eqref{BKNspecial} to produce the 
following Weitzenböck formula
\begin{equation} \label{weitzenbock}
   \Delta_E'' = \Box_{E \otimes \Omega^{0,q}}''
 + \Lambda i\Theta(E \otimes \Omega^{0,q}) - \Lambda i\Theta(E)
\end{equation}
operating on $E \otimes \Omega^{0,q}$. We remark, for clarity, that the first
composition is of maps $E \otimes \Omega^{0,q} \rightleftarrows E \otimes
\Omega^{0,q} \otimes \Omega^{1,1}$ and the second composition is of maps $E
\otimes \Omega^{0,q} \rightleftarrows E \otimes \Omega^{1,q+1}$. We have
followed Donaldson \cite[p36]{donaldsonYM} in the derivation of this identity.

\begin{lemma}[Morrey--Kohn--Hörmander formula]
Let $E$ be a holomorphic Hermitian vector bundle over a Kähler manifold $M$. For
any $u \in C^\infty_c (M, E \otimes \Omega^{0,q})$, we have
\begin{equation} \label{mkhidentity}
   \int |d''u|^2 + |d''^* u|^2 = \int |\nabla''u|^2
 + \int \langle u, \Lambda i\Theta (E \otimes \Omega^{0,q}) u \rangle
 - \langle u, \Lambda i\Theta(E) u \rangle .
\end{equation}
\end{lemma}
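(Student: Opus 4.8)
The plan is to establish the Morrey--Kohn--Hörmander formula \eqref{mkhidentity} by integrating the Weitzenböck formula \eqref{weitzenbock} against $u$ over $M$. First I would write, for $u \in C^\infty_c(M, E \otimes \Omega^{0,q})$,
\begin{equation*}
   \int \langle \Delta''_E u, u \rangle = \int |d''u|^2 + |d''^* u|^2,
\end{equation*}
which is simply the definition of $\Delta''_E = d''^*d'' + d''d''^*$ together with integration by parts; the compact support of $u$ guarantees there are no boundary terms, so the formal adjoints $d''^*$ behave like genuine adjoints here. Likewise, applying the same reasoning to $\Box''_{E \otimes \Omega^{0,q}} = \nabla''^* \nabla'' + \nabla'' \nabla''^*$ — but using the Bochner/Kähler identity $\Box'' = \nabla''^*\nabla''$ on $E \otimes \Omega^{0,q}$ in the relevant bidegree, or more carefully just pairing $\Box''$ against $u$ — gives $\int \langle \Box''_{E \otimes \Omega^{0,q}} u, u \rangle = \int |\nabla'' u|^2$, again with no boundary contributions. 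Pairing the two curvature terms of \eqref{weitzenbock} against $u$ directly produces the remaining two summands on the right-hand side of \eqref{mkhidentity}.

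The one point that needs a little care is why $\int \langle \Box''_{E \otimes \Omega^{0,q}} u, u \rangle$ equals $\int |\nabla'' u|^2$ rather than $\int |\nabla'' u|^2 + |\nabla''^* u|^2$: here one uses that on the anti-holomorphic bundle $E \otimes \Omega^{0,q}$ the relevant section $u$ has top antiholomorphic type in the fibre direction of the $\nabla''$ being considered — or, more robustly, one simply observes that the derivation of \eqref{weitzenbock} already packaged $\nabla''^*\nabla''$ as the operator called $\Box''$, and the $\nabla''\nabla''^*$ piece has been absorbed into the curvature correction terms via the Kähler identities. I would follow Donaldson \cite[p36]{donaldsonYM} and simply cite that \eqref{weitzenbock} is the identity whose diagonal matrix element against compactly supported $u$ gives \eqref{mkhidentity} after the two elementary integrations by parts above.

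I expect this lemma to be essentially a bookkeeping exercise once \eqref{weitzenbock} is in hand: the genuine analytic and geometric content — the Bochner--Kodaira--Nakano identity and the Kähler commutation relations — has already been invoked. The only mild obstacle is being careful about which Laplacian ($\Delta''$, $\Box'$, $\Box''$) acts on which bundle ($E$ versus $E \otimes \Omega^{0,q}$) and in which bidegree, since the notation collapses several identifications; but with the conventions fixed as in the preceding paragraphs of the excerpt, the pairing of \eqref{weitzenbock} with $u$ and integration by parts is immediate.
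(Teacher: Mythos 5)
Your approach coincides with the paper's: pair the Weitzenböck identity \eqref{weitzenbock} against $u$, integrate, and use that $\int\langle u,\Delta''u\rangle=\int|d''u|^2+|d''^*u|^2$ and $\int\langle u,\Box''u\rangle=\int|\nabla''u|^2$ by integration by parts (no boundary terms since $u\in C^\infty_c$).

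However, the single subtle point you correctly flag --- why there is no $|\nabla''^*u|^2$ term --- is justified incorrectly in both of your proposed explanations. The first, that ``$u$ has top antiholomorphic type in the fibre direction of the $\nabla''$ being considered,'' has the degree backwards: viewed in the $\nabla$-complex (that is, as a form with coefficients in the bundle $E\otimes\Omega^{0,q}$), the section $u$ has \emph{bottom} degree $(0,0)$. Since $\nabla''^*$ lowers the second form degree, $\nabla''^*u$ would be a $(0,-1)$-form, and that space is zero; this is the entire reason $\nabla''^*u=0$. Your ``more robust'' alternative is also wrong: $\Box''$ is \emph{defined} as $\nabla''^*\nabla''+\nabla''\nabla''^*$ (the same Laplacian construction used for $\Delta''$, just with $\nabla$ in place of the Chern connection $d$ on $E$), so the $\nabla''\nabla''^*$ term has not been ``absorbed into the curvature correction terms via the K\"ahler identities'' --- it is genuinely present in $\Box''$ and simply annihilates $(0,0)$-forms. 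Once the reason is stated correctly, the rest of your argument, which mirrors the paper's two integrations by parts followed by substitution of \eqref{weitzenbock}, is fine.
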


\begin{proof}
By the definition of the adjoint, integrating by parts gives
\begin{equation}
   \int |d''u|^2 + |d''^* u|^2
 = \int \langle u, \Delta''u \rangle .
\end{equation}
The same integration by parts with $\nabla$ in place of $d$ gives
\begin{equation}
   \int |\nabla''u|^2
 = \int \langle u, \Box''u \rangle .
\end{equation}
Now we take the difference of these two identities and use \eqref{weitzenbock}
to obtain \eqref{mkhidentity}.
\end{proof}

\subsection{\texorpdfstring{$L^2$}{L\textasciicircum 2} theory of the \texorpdfstring{$d''$}{d''}-operator}

The $L^2$ theory that we review here is due to Hörmander \cite{hormander} and
Andreotti--Vesentini \cite{andreottivesentini}.

\begin{lemma} \label{dapproxsmooth}
Let $E$ be a holomorphic Hermitian vector bundle over a complete Kähler manifold
$M$. We consider sections $u$ of $E \otimes \Omega^{p,q}$.
\begin{itemize}
\item
If $u, d''u \in L^2$ (in the sense of distributions), then there exists a
sequence $u_i \in C^\infty_c$ such that $(u_i, d''u_i) \to (u, d''u)$ in $L^2$.
\item
If $u, d''u, d''^* u \in L^2$ (in the sense of distributions), then there
exists a sequence $u_i \in C^\infty_c$ such that $(u_i, d''u_i, d''^* u_i)
\to (u, d''u, d''^* u)$ in $L^2$.
\end{itemize}
\end{lemma}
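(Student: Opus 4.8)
The statement to prove is Lemma~\ref{dapproxsmooth}: on a complete Kähler manifold, $L^2$ sections with $d''u\in L^2$ (resp.\ with $d''u,d''^*u\in L^2$) can be approximated in graph norm by smooth compactly supported sections. This is the classical density lemma underlying the Hörmander--Andreotti--Vesentini $L^2$ method.

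The plan is to carry out the approximation in two stages: first cut off to compact support, then mollify. For the cutoff, I would use completeness of the Kähler metric to produce a sequence of smooth cutoff functions $\chi_j: M\to[0,1]$ with $\chi_j=1$ on an exhausting family of compact sets, $\mathrm{supp}\,\chi_j$ compact, and crucially $|d\chi_j|\le 2^{-j}$ (uniformly); such functions exist because on a complete manifold the distance function to a fixed point is proper and $1$-Lipschitz, so one composes it with suitable plateau functions and smooths. Then $\chi_j u\to u$ in $L^2$ by dominated convergence, and $d''(\chi_j u)=\chi_j\,d''u+(d''\chi_j)\wedge u$; the first term converges to $d''u$ in $L^2$ and the second is bounded in $L^2$-norm by $2^{-j}\|u\|_{L^2}\to 0$. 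This reduces the first bullet to the case of $u$ with compact support (but only distributionally differentiable). The second bullet is identical except one also checks $d''^*(\chi_j u)$: since $d''^*$ is (up to sign) a contraction against $d''\chi_j$ plus $\chi_j d''^* u$, the same $2^{-j}$ bound on $|d\chi_j|$ makes the extra term go to zero in $L^2$.

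For the second stage — approximating a compactly supported $u$ with $d''u\in L^2$ (resp.\ also $d''^*u\in L^2$) by smooth compactly supported sections — I would use a standard Friedrichs-type mollification in local coordinates together with a partition of unity subordinate to a finite cover of the (compact) support by coordinate charts trivializing $E$. In each chart, convolving with a smooth approximate identity $\rho_\eps$ gives smooth sections $u\ast\rho_\eps\to u$ in $L^2$, and the key point is the Friedrichs commutator lemma: $d''(u\ast\rho_\eps)-(d''u)\ast\rho_\eps\to 0$ in $L^2$, and likewise for $d''^*$, because $d''$ and $d''^*$ are first-order operators with smooth coefficients so the commutator $[d'',\rho_\eps\ast\cdot\,]$ is bounded on $L^2$ uniformly in $\eps$ and tends to zero on smooth sections, hence on all $L^2$ sections by density. (The zeroth-order terms coming from the connection coefficients and the metric just contribute another $L^2$-convergent piece and cause no trouble.) Reassembling via the partition of unity — noting that multiplication by a fixed smooth compactly supported function commutes with $d''$ up to a bounded zeroth-order term — yields the desired smooth compactly supported approximants.

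The main obstacle, and the only place completeness is genuinely used, is the construction of the cutoff functions with gradient bound tending to zero: without the $|d\chi_j|\to 0$ control the error term $(d''\chi_j)\wedge u$ need not vanish, and indeed the lemma is false on incomplete manifolds. Everything else is routine: dominated convergence for the $L^2$ convergence of the cutoffs, and the Friedrichs mollification/commutator estimate, which is entirely local and standard (see e.g.\ Hörmander \cite{hormander} or Demailly). I would state the cutoff construction as the first step, verify the three convergences $(\chi_j u, d''(\chi_j u))\to(u,d''u)$ (and the $d''^*$ analogue), then invoke Friedrichs mollification locally and patch — keeping in mind throughout that all operators involved are $d''$-type and thus only see the anti-holomorphic directions, which does not affect any of the estimates.
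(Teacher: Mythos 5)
Your proposal is correct and follows essentially the same two-stage argument as the paper: cut off to compact support using completeness to build Lipschitz cutoffs with small gradient (the paper uses $f_\eps = \max(1-\eps\,d(\cdot,p_0),0)$, you use a $2^{-j}$ bound — the same idea), then localize with a partition of unity and invoke the Friedrichs mollification/commutator lemma in coordinates. The only difference is that you spell out the $d''^*$ commutator term and the role of completeness a bit more explicitly than the paper does, which is a feature rather than a divergence.
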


\begin{proof}
This is essentially a special case of Friedrichs' result \cite{friedrichs} which
applies more generally to any first order differential operator. We outline the 
argument, which is also given in Hörmander \cite[Proposition 2.1.1]{hormander}
and Andreotti--Vesentini \cite[Lemma 4, Proposition 5]{andreottivesentini}.

We prove the first statement only, as the proof of the second is identical. Let 
$u$ be given. Composing the distance function to a specified point in $M$ with
the cutoff function $x \mapsto \max (1 - \eps x, 0)$, we get a function $f_\eps:
M \to \R$ with $\sup |f_\eps| \le 1$ and $\sup |df_\eps| \le \eps$, 
so that $f_\eps \to 1$ uniformly on compact subsets of $M$ as $\eps \to 0$.
Using these properties, it follows that $f_\eps u \to u$ in $L^2$ and that $d''
(f_\eps u) \to d''u$ in $L^2$. Since $M$ is complete, $f_\eps$ is compactly
supported. Hence we may assume without loss of generality that $u$ is compact
supported.

Since $u$ is compactly supported, we may use a partition of unity argument to
reduce to the case when $u$ is supported in a given small coordinate chart of
$M$. Now in a small coordinate chart, choosing trivializations of the bundles in
question, the operator $d''$ is a first order differential operator $D$ with
smooth coefficients. It can now be checked (and this is the key point) that
$\|D (u* \varphi_\eps) - Du* \varphi_\eps\|_2 \to 0$, where $\varphi_\eps
:= \eps^{-n} \varphi(x/\eps)$ is a smooth compactly supported approximation to
the identity. It follows that the convolutions $u* \varphi_\eps$ give the
desired approximation of $u$ by smooth functions of compact support.
\end{proof}

\begin{proposition} \label{dbarexistence}
Let $E$ be a holomorphic Hermitian vector bundle over a complete Kähler manifold
$M$. Fix $q$, and suppose that for all $u \in C^\infty_c (M, E \otimes 
\Omega^{0,q})$, we have
\begin{equation}\label{coerciveestimate}
   \int |u|^2 \le A \int |d''u|^2 + |d''^* u|^2 .
\end{equation}
Then for any $u \in L^2 (M, E \otimes \Omega^{0,q})$ satisfying $d''u=0$, there 
exists $\xi \in L^2 (M, E \otimes \Omega^{0,q-1})$ satisfying $d''\xi = u$ and
\begin{equation}
   \int |\xi|^2 \le A \int |u|^2
\end{equation}
($d''$ is taken in the sense of distributions).
\end{proposition}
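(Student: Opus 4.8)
The plan is to use the standard Hilbert space duality argument (the Riesz representation / Hahn–Banach approach of Hörmander). Work in the Hilbert space $H := L^2(M, E \otimes \Omega^{0,q})$ with the operator $d'': H \dashrightarrow L^2(M, E \otimes \Omega^{0,q+1})$, densely defined in the sense of distributions, and its Hilbert-space adjoint $(d'')^*$. The first thing I would establish is that this Hilbert-space adjoint agrees with the formal adjoint on $C^\infty_c$ in the appropriate sense; this is exactly where Lemma \ref{dapproxsmooth} enters, since completeness of $M$ lets us approximate elements of the domains of $d''$ and $(d'')^*$ by compactly supported smooth sections and thereby reduce the coercive estimate \eqref{coerciveestimate} from $C^\infty_c$ to the full operator domains. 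Concretely, I would show that for any $v \in \operatorname{dom}(d'') \cap \operatorname{dom}((d'')^*) \subseteq H$ one still has $\int |v|^2 \le A(\int |d'' v|^2 + |(d'')^* v|^2)$.

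Next, given $u \in H$ with $d'' u = 0$, I want to produce $\xi$ with $d'' \xi = u$. Consider the linear functional defined on the range of $(d'')^*$ inside $L^2(M, E \otimes \Omega^{0,q-1})$ by $(d'')^* w \mapsto \langle u, w\rangle$. To see this is well-defined and bounded, decompose an arbitrary $w \in \operatorname{dom}((d'')^*)$ as $w = w_1 + w_2$ with $w_1 \in \overline{\operatorname{range}(d'')}$ and $w_2 \perp \operatorname{range}(d'')$; then $w_2 \in \ker((d'')^*)$, and since $d'' u = 0$ means $u \in \overline{\operatorname{range}(d'')}$... actually more simply: $\langle u, w\rangle = \langle u, w_1\rangle$ because $u \in \ker(d'')$ and $w_2$ — wait, I should instead note $\langle u, w \rangle = \langle u, w_1\rangle$ since $w_2$ is orthogonal to $\overline{\operatorname{range}(d'')} \ni u$. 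For $w_1$ one has $(d'')^* w_1 = (d'')^* w$, and applying the extended coercive estimate to $w_1$ (after checking $w_1 \in \operatorname{dom}((d'')^*)$ with $d'' w_1 = 0$, hence $w_1$ lies in the domain where the estimate applies with the $|d'' w_1|^2$ term vanishing) gives $\|w_1\|^2 \le A \|(d'')^* w_1\|^2 = A\|(d'')^* w\|^2$. Therefore $|\langle u, w\rangle| = |\langle u, w_1\rangle| \le \|u\| \cdot \|w_1\| \le \sqrt{A}\,\|u\| \cdot \|(d'')^* w\|$, so the functional $(d'')^* w \mapsto \langle u, w\rangle$ on $\operatorname{range}((d'')^*)$ is bounded with norm at most $\sqrt{A}\,\|u\|$.

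By Hahn–Banach and Riesz representation, extend this functional to all of $L^2(M, E \otimes \Omega^{0,q-1})$ and represent it by an element $\xi$ with $\|\xi\|^2 \le A \int |u|^2$, satisfying $\langle \xi, (d'')^* w\rangle = \langle u, w\rangle$ for all $w \in \operatorname{dom}((d'')^*)$. This says precisely that $\xi \in \operatorname{dom}((d''{}^*)^*) = \operatorname{dom}(d'')$ (as $d''$ is closed, being defined distributionally) with $d'' \xi = u$ in the sense of distributions, which is the claim. The step I expect to be the main obstacle — or at least the one requiring the most care — is the density/approximation argument bridging \eqref{coerciveestimate} (stated only for $C^\infty_c$) and the Hilbert-space operators: one must verify that $\operatorname{dom}(d'') \cap \operatorname{dom}((d'')^*)$ is contained in the closure of $C^\infty_c$ in the graph norm of $d'' \oplus (d'')^*$, which is exactly the content of the second bullet of Lemma \ref{dapproxsmooth} and relies essentially on completeness of the Kähler metric; without it the estimate could fail to propagate and the duality argument would collapse. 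Everything else is soft functional analysis.
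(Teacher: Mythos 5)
Your overall strategy---the Hörmander duality argument via Hahn--Banach and Riesz representation, with Lemma~\ref{dapproxsmooth} used to propagate the coercive estimate from $C^\infty_c$ to the relevant operator domains---is exactly the approach the paper takes. However, there is a genuine gap at precisely the point where you visibly waver. You decompose a test form $w$ orthogonally as $w = w_1 + w_2$ with $w_1 \in \overline{\operatorname{range}(d'')}$ and $w_2 \perp \operatorname{range}(d'')$, and you then need $\langle u, w_2\rangle = 0$, which you justify by asserting $u \in \overline{\operatorname{range}(d'')}$. But the hypothesis only gives $u \in \ker d''$, and $\overline{\operatorname{range}(d'')} \subseteq \ker d''$ may in general be a proper inclusion; the assertion that $u$ lies in the closure of the range is essentially the conclusion you are trying to reach. (It \emph{is} true here, but only because the coercive estimate, once extended to $\operatorname{dom}(d'') \cap \operatorname{dom}(d''^*)$, kills $\ker d'' \cap \ker d''^*$ and hence forces $\ker d'' = \overline{\operatorname{range}(d'')}$---an observation you would need to spell out, and one you never make.)

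The paper avoids this by decomposing against the \emph{other} closed subspace: it writes $\varphi = \varphi_1 + \varphi_2$ with $\varphi_1 \in \ker d''$ and $\varphi_2 \in (\ker d'')^\perp$. Then $\langle \varphi, u\rangle = \langle \varphi_1, u\rangle$ is immediate from $u \in \ker d''$ alone, with no claim about where $u$ sits relative to the range; meanwhile $\varphi_2 \perp \ker d'' \supseteq \operatorname{range}(d'')$ still gives $\varphi_2 \in \ker d''^*$, hence $d''^*\varphi_1 = d''^*\varphi$ and $d''\varphi_1 = 0$, and the (extended) coercive estimate applied to $\varphi_1$ closes the argument just as yours would. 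So either replace $\overline{\operatorname{range}(d'')}$ by $\ker d''$ in your decomposition, or insert the short argument that the coercive estimate forces these two subspaces to coincide; the remainder of your proof is sound.
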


\begin{proof}
We follow an argument from notes by Demailly \cite[p33, (8.4) Theorem]{demailly}.

We wish to find $\xi$ such that $d''\xi = u$, or, equivalently, 
$\int \langle d''^*\varphi, \xi \rangle = \int \langle \varphi, u \rangle$ for 
all $\varphi \in C^\infty_c (M, E \otimes \Omega^{0,q})$. We claim that the
existence of such a $\xi$ with $\int |\xi|^2 \leq B$ is equivalent to the 
estimate
\begin{equation} \label{keyestimate}
   \left | \int \langle \varphi, u \rangle \right |^2 \leq B 
   \int |d''^*\varphi|^2
\end{equation}
for all $\varphi \in C^\infty_c (M, E \otimes \Omega^{0,q})$. Indeed, given
\eqref{keyestimate}, the map $d''^*\varphi \mapsto \int \langle\varphi,u\rangle$
on $d''^* (C^\infty_c (M, E \otimes \Omega^{0,q}))$ is well defined and $L^2$ 
bounded, and thus it is of the form $\int \langle d''^*\varphi, \xi \rangle$ for
a unique $\xi$ in the closure of $d''^* (C^\infty_c (M,E\otimes\Omega^{0,q}))
\subseteq L^2 (M, E \otimes \Omega^{0,q-1})$ satisfying $\int |\xi|^2 \leq B$. 
Thus we are reduced to showing \eqref{keyestimate} for $B = A \int|u|^2$.

To prove \eqref{keyestimate}, argue as follows. Since $L^2$ convergence implies 
distributional convergence, the kernel (in the sense of distributions) $\ker d''
\subseteq L^2 (M, E \otimes \Omega^{0,q})$ is a closed subspace. Hence for any
$\varphi \in C^\infty_c (M, E \otimes \Omega^{0,q})$, we may write $\varphi =
\varphi_1 + \varphi_2$ where $\varphi_1 \in \ker d''$ and $\varphi_2 \in
(\ker d'')^\perp$. Now since $u \in \ker d''$, we have
\begin{equation}\label{firstineq}
   \left | \int \langle \varphi, u \rangle \right |^2
 = \left | \int \langle \varphi_1, u \rangle \right |^2 \leq
   \int |u|^2 \cdot \int|\varphi_1|^2 .
\end{equation}
Since $\varphi_2 \perp \ker d'' \supseteq \im d''$, it follows that $\varphi_2
\in \ker d''^*$ (in the sense of distributions). Hence
\begin{equation} \label{secondineq}
   \int |d''\varphi_1|^2 + |d''^*\varphi_1|^2 = \int |d''^*\varphi|^2 .
\end{equation}
Combining \eqref{firstineq} and \eqref{secondineq}, we see that to prove \eqref
{keyestimate} with $B = A \int |u|^2$, it suffices to show that
\begin{equation}
   \int |\varphi_1|^2 \leq A \int |d''\varphi_1|^2 + |d''^*\varphi_1|^2 .
\end{equation}
This is true by hypothesis \eqref{coerciveestimate} for $\varphi_1 \in 
C^\infty_c (M, E \otimes \Omega^{0,q})$, and hence by Lemma \ref{dapproxsmooth} 
it holds given just that $\varphi_1, d''\varphi_1,  d''^*\varphi_1 \in L^2$.
\end{proof}

\subsection{Stein manifolds and solving the \texorpdfstring{$d''$}{d''}-operator}

Let $V$ be a Stein manifold or a Stein domain. A smooth $J$-convex function
$\phi: V \to \R$ induces a symplectic form $\omega_\phi := id'd''\phi$ and a
Riemannian metric $g_\phi(X,Y) := \omega_\phi(X,JY)$ (so $h_\phi := g_\phi -
i\omega_\phi$ is a Hermitian metric) whose distance function we denote by 
$d_\phi (\cdot,\cdot)$. The function $\phi$ also gives rise to a holomorphic
Hermitian line bundle $L^\phi$ over $V$, namely the trivial complex line
bundle $\C$ equipped with its standard holomorphic structure $d_\C''$ and the 
Hermitian metric $\left|\cdot\right|_{L^\phi} := e^{-\frac 12\phi} 
\left|\cdot\right|_\C$.  The resulting Chern connection on $L^\phi$ is given by
\begin{equation}
   d_{L^\phi} = d_\C - d'\phi
\end{equation}
with curvature $\Theta(L^\phi) = d'd''\phi = -i\omega_\phi$.  (Equivalently, 
$L^\phi$ is the trivial complex line bundle equipped with its standard 
Hermitian metric and the holomorphic structure $d''_\C + \frac 12 d''\phi$, 
with resulting Chern connection $d_\C + \frac 12 i J^* d\phi$.  This is 
equivalent to the first definition via multiplication by $e^{\frac 12\phi}$.)

The following result (due to Hörmander \cite{hormander} and Andreotti--Vesentini
\cite{andreottivesentini}) allows us to produce many holomorphic sections of 
$L^\phi$ for sufficiently $J$-convex $\phi$.

\begin{proposition} \label{steinsolved}
For every Stein manifold $V$ with complete Kähler metric $g$, there exists a
continuous function $c: V \to \R_{>0}$ with the following property. Let $\phi: V
\to \R$ be $J$-convex and satisfy $g_\phi \ge c \cdot g$ (pointwise inequality 
of quadratic forms). Then for any $u \in L^2 (V, L^\phi \otimes \Omega^{0,q})$ 
($q>0$) satisfying $d''u = 0$, there exists $\xi \in L^2 (V, L^\phi)$ satisfying 
$d''\xi = u$ and
\begin{equation} \label{dbarexistencesteinbound}
   \int |\xi|^2 \le \int |u|^2 .
\end{equation}
\end{proposition}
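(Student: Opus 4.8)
The plan is to deduce Proposition \ref{steinsolved} from Proposition \ref{dbarexistence} by verifying the coercive estimate \eqref{coerciveestimate} for $E = L^\phi$ on $(0,q)$-forms with $q > 0$, using the Morrey--Kohn--Hörmander formula \eqref{mkhidentity} together with positivity of the curvature $\Theta(L^\phi) = -i\omega_\phi$. First I would recall that by \eqref{mkhidentity}, for $u \in C^\infty_c(V, L^\phi \otimes \Omega^{0,q})$,
\begin{equation*}
   \int |d''u|^2 + |d''^* u|^2 = \int |\nabla'' u|^2 + \int \langle u, \Lambda i\Theta(L^\phi \otimes \Omega^{0,q}) u\rangle - \langle u, \Lambda i\Theta(L^\phi) u\rangle.
\end{equation*}
The term $\int |\nabla'' u|^2 \ge 0$ is discarded. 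The curvature $\Theta(L^\phi \otimes \Omega^{0,q}) = \Theta(L^\phi) \otimes \id + \id \otimes \Theta(\Omega^{0,q})$ splits, and the key algebraic input is the standard pointwise identity that, when the Kähler form is $\omega_\phi$ (equivalently, when the metric $g$ is replaced by $g_\phi$), the operator $[\Lambda i\Theta(L^\phi), \cdot] = [\Lambda \omega_\phi, \cdot]$ acts on $(0,q)$-forms as multiplication by $q \ge 1$; this is the classical curvature computation for the trivial line bundle with curvature equal to (a multiple of) the Kähler form. After cancelling the $\Omega^{0,q}$-curvature contributions (which appear on both sides of the relevant Nakano-type manipulation, or can be bounded) one is left with a bound $\int |u|^2 \le \int |d''u|^2 + |d''^* u|^2$, i.e. \eqref{coerciveestimate} with $A = 1$, provided the Kähler metric used in the $L^2$ pairings is $g_\phi$ itself.

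The subtlety — and the main obstacle — is that the metric in the statement is a fixed complete Kähler metric $g$, not $g_\phi$, whereas the clean curvature identity ``$[\Lambda\omega_\phi,\cdot] = q$'' holds with respect to $g_\phi$. I would handle this by the standard trick: apply the Bochner--Kodaira argument with respect to the metric $g_\phi$ (for which the curvature form $-i\omega_\phi$ is exactly the Kähler form, so the commutator term is precisely $q \cdot \id \ge \id$ on $(0,q)$-forms with $q>0$), obtaining the estimate $\int_{g_\phi}|u|^2 \le \int_{g_\phi}|d''u|^2 + |d''^*u|^2$ with constant $1$. One then checks that this $L^2$ estimate is actually \emph{conformally invariant} in the appropriate bidegree — this is the familiar fact that the $\bar\partial$-Neumann $L^2$ estimate for functions (and more generally the relevant norms of $(0,q)$-forms and their images under $d''$, $d''^*$) transforms correctly under a change of Kähler metric, so the estimate with respect to $g_\phi$ implies the same estimate with respect to $g$ after absorbing the comparison $g_\phi \ge c \cdot g$; the role of the hypothesis $g_\phi \ge c\cdot g$ is precisely to ensure $g_\phi$ is also complete (Lemma \ref{dapproxsmooth} and Proposition \ref{dbarexistence} require completeness) and to let us pass between the two metrics. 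Actually the cleanest route is: the whole argument only ever needs one complete Kähler metric, so run Propositions \ref{dbarexistence} with $g_\phi$ in place of $g$; completeness of $g_\phi$ follows from $g_\phi \ge c\cdot g$ and completeness of $g$, and the function $c: V \to \R_{>0}$ is chosen (using a exhaustion of $V$ by compacts) so that this comparison forces completeness. Then \eqref{dbarexistencesteinbound} is exactly the conclusion of Proposition \ref{dbarexistence} with $A = 1$.

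So the key steps, in order, are: (1) replace the reference metric by $g_\phi$ and verify it is complete, using $g_\phi \ge c\cdot g$ with $c$ chosen appropriately; (2) write down \eqref{mkhidentity} for $E = L^\phi$ on $(0,q)$-forms with respect to $g_\phi$, drop $\int|\nabla''u|^2 \ge 0$; (3) compute the curvature terms: $\Theta(L^\phi) = -i\omega_\phi$ gives $[\Lambda i\Theta(L^\phi),\cdot] = q\cdot\id$ on $(0,q)$-forms, while the $\Omega^{0,q}$-curvature contributions cancel between $\Lambda i\Theta(L^\phi\otimes\Omega^{0,q})$ and the separate handling, leaving the net lower bound $\ge q \ge 1$; (4) conclude \eqref{coerciveestimate} with $A = 1$ and invoke Proposition \ref{dbarexistence} to get $\xi$ with $\int|\xi|^2 \le \int|u|^2$. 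The main obstacle is step (3) together with the metric bookkeeping in step (1): one must be careful that the positivity ``$q$'' one extracts really does dominate all the $\Omega^{0,q}$-curvature terms, which is exactly where choosing the Kähler form to be $\omega_\phi$ (so that the line bundle curvature equals minus the Kähler form) makes everything align so that the constant is a clean $1$ rather than something depending on $\phi$.
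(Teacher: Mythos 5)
Your skeleton is right — reduce to the coercive estimate \eqref{coerciveestimate} via \eqref{mkhidentity} and then invoke Proposition \ref{dbarexistence} — but step (3) has a genuine gap, and the ``conformal invariance'' escape hatch does not close it. After expanding $\Theta(L^\phi\otimes\Omega^{0,q}) = \Theta(L^\phi)\otimes\id + \id\otimes\Theta(\Omega^{0,q})$, the curvature part of \eqref{mkhidentity} splits into the line-bundle difference $\langle u, \Lambda i(\Theta(L^\phi)\otimes\id)u\rangle - \langle u, \Lambda i\Theta(L^\phi)u\rangle$ (the good positive term) and the cross term $\langle u, \Lambda i\Theta(\Omega^{0,q})u\rangle$. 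These do not cancel, and there is no ``separate handling'' anywhere in the identity that cancels the latter. If you switch the Kähler metric to $g_\phi$, the good term does become exactly $q\,\id$ (the paper's $\alpha_i$ all equal $1$), but you have no control on $\Theta_{g_\phi}(\Omega^{0,q})$: that is the curvature of the anti-holomorphic cotangent bundle in the metric induced by $g_\phi$, it depends on third- and fourth-order derivatives of $\phi$, and it can be arbitrarily negative. Since it depends on $\phi$, no function $c$ depending only on $(V,g)$ --- as the statement requires --- can dominate it. The $L^2$ estimate is \emph{not} conformally invariant on $(0,q)$-forms with $0<q<n$, and even if it were, that would not make $\Theta_{g_\phi}(\Omega^{0,q})$ bounded. (A further issue: the conclusion \eqref{dbarexistencesteinbound} is an $L^2$ bound with respect to the fixed metric $g$ --- this is what the application in Lemma \ref{peaksections}, namely ``in the fixed metric $g$,'' uses --- whereas your route produces $g_\phi$-norm bounds.)

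The paper avoids all of this by never changing the Kähler metric: it stays with $g$ throughout, so that $\Theta_g(\Omega^{0,q})$ is a fixed smooth tensor depending only on $(V,g)$, hence $1 + |\Lambda|\,|\Theta(\Omega^{0,q})|$ is a legitimate choice of $c$. The positivity of $L^\phi$ is then measured against $g$ by simultaneously diagonalizing $g_\phi$ against $g$ with scaling factors $\alpha_i$: the good term is bounded below by $q\min_i\alpha_i$, and $g_\phi \geq c\cdot g$ forces $\min_i\alpha_i \geq c$, giving the needed domination. Note also that the paper applies Lemma \ref{dapproxsmooth} and Proposition \ref{dbarexistence} with the metric $g$, so completeness of $g$ is what is used; your observation that $g_\phi \geq c\cdot g$ would transfer completeness to $g_\phi$ is correct but not needed.
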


\begin{proof}
By Proposition \ref{dbarexistence}, it suffices to show the estimate
\begin{equation}
   \int |d''u|^2 + |d''^* u|^2 \geq \int |u|^2
\end{equation}
for all $u \in C^\infty_c (V, L \otimes \Omega^{0,q})$. Applying the
Morrey--Kohn--Hörmander identity \eqref{mkhidentity} to the left hand side, it
suffices to show the following pointwise curvature estimate
\begin{equation}
   \langle u, \Lambda i\Theta (L \otimes \Omega^{0,q}) u \rangle
 - \langle u, \Lambda i\Theta(L) u \rangle \geq |u|^2 .
\end{equation}
Expanding $\Theta(L \otimes \Omega^{0,q}) = \Theta(L) \otimes \id_{\Omega^{0,q}}
+ {\id_L} \otimes \Theta(\Omega^{0,q})$, it suffices to show that
\begin{equation} \label{secondcurvest}
   \langle u, \Lambda i(\Theta(L) \otimes \id) u \rangle
 - \langle u, \Lambda i\Theta(L) u \rangle \geq 
   (1 + |\Lambda| \, |\Theta (\Omega^{0,q})|) |u|^2 .
\end{equation}
We remark for clarity that the first composition is of maps $L \otimes
\Omega^{0,q} \rightleftarrows L \otimes \Omega^{0,q} \otimes \Omega^{1,1}$ and
the second composition is of maps $L \otimes \Omega^{0,q} \rightleftarrows
L \otimes \Omega^{1,q+1}$. Let $\alpha_1, \ldots, \alpha_n$ denote the scaling
factors associated to a simultaneous diagonalization of $g$ and $g_\phi$,
meaning that $|v_i|^2_{g_\phi} = \alpha_i |v_i|^2_g$ for a simultaneous
orthogonal basis $v_1, \ldots, v_n$. We may now calculate (see Voisin \cite
[Lemma 6.19]{voisin})
\begin{equation}
   \Lambda i(\Theta(L) \otimes \id) u = \biggl(\sum_{i=1}^n \alpha_i\biggr) u .
\end{equation}
The operator $\Lambda i\Theta(L)$ has an orthonormal basis of eigenvectors with
eigenvalues $\sum_{i \in I} \alpha_i$ for all $I \subseteq \{1, \ldots, n\}$ 
with $|I| = n-q$. Thus to ensure \eqref{secondcurvest}, it suffices to have
$q \min \alpha_i \geq 1 + |\Lambda| \, |\Theta(\Omega^{0,q})|$, which can be
achieved by choosing $c = 1 + |\Lambda| \, |\Theta(\Omega^{0,q})|$ since $q>0$.
\end{proof}

\begin{lemma} \label{localJconvex}
Let $\phi: (\C^n, 0) \to \R$ be a germ of a smooth $J$-convex function. For all
$\eps>0$, there exists a germ of a holomorphic function $u: (\C^n,0) \to \C$
satisfying
\begin{equation}
   \left|\Re u(z) - \Bigl[\phi(z) - \frac12 d_\phi(z,0)^2\Bigr]\right| \le
   \eps \cdot d_\phi (z,0)^2
\end{equation}
in a neighborhood of zero.
\end{lemma}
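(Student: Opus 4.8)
The plan is to reduce the statement to a Taylor-expansion computation around the origin, using the fact that the function $\phi(z)-\frac12 d_\phi(z,0)^2$ has a vanishing $2$-jet's ``holomorphic obstruction'' in a precise sense. First I would normalize coordinates: after a complex-linear change of variables we may assume $d'd''\phi(0)$ agrees with the standard Hermitian form, so that the metric $g_\phi$ at the origin is the Euclidean metric and $d_\phi(z,0)^2 = |z|^2 + O(|z|^3)$. Writing the real-analytic-looking (but merely smooth) function $\phi$ via its second-order Taylor polynomial at $0$, we have
\begin{equation}
  \phi(z) = \phi(0) + \Re \ell(z) + Q(z) + o(|z|^2),
\end{equation}
where $\ell$ is $\C$-linear and $Q$ is the real quadratic form given by the full real Hessian of $\phi$ at $0$. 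The Hermitian part of $Q$ is $|z|^2$ by our normalization; the remaining part of $Q$ is $\Re q(z)$ for a $\C$-bilinear quadratic form $q$ (the ``holomorphic Hessian'').

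The key observation is then that $\phi(z) - \frac12 d_\phi(z,0)^2 = \phi(0) + \Re\ell(z) + \Re q(z) + o(|z|^2)$: the purely Hermitian term $|z|^2$ in $\phi$ is exactly cancelled, to second order, by $-\frac12 d_\phi(z,0)^2 = -\frac12|z|^2 + o(|z|^2)$ together with a factor-of-two bookkeeping that I'd check carefully (this is where the constant $\frac12$ in the statement is forced). So the candidate holomorphic germ is simply
\begin{equation}
  u(z) := \phi(0) + \ell(z) + q(z),
\end{equation}
a holomorphic polynomial of degree $2$, whose real part matches $\phi(z)-\frac12 d_\phi(z,0)^2$ up to an error that is $o(|z|^2)$. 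To upgrade ``$o(|z|^2)$'' to ``$\le \eps\cdot d_\phi(z,0)^2$ in a neighborhood'': since $d_\phi(z,0)^2$ is comparable to $|z|^2$ near $0$, an $o(|z|^2)$ error is $\le \eps|z|^2 \le \eps' d_\phi(z,0)^2$ on a sufficiently small ball, which is exactly the desired estimate after relabeling $\eps$.

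The main obstacle is bookkeeping rather than depth: one must be careful about (i) the precise relation between the real Hessian of $\phi$, its Hermitian part $d'd''\phi$, and the ``holomorphic Hessian'' $q$, and (ii) the second-order behavior of the Riemannian distance $d_\phi(\cdot,0)$ associated to the metric $g_\phi(X,Y)=\omega_\phi(X,JY)$ — in particular that $d_\phi(z,0)^2$ has no linear term and its quadratic term is exactly the $g_\phi(0)$-norm squared, with all metric-variation corrections pushed into $o(|z|^2)$. Both are standard: (i) is the decomposition of a real quadratic form on $\C^n$ into its $\C$-Hermitian and $\C$-bilinear pieces, and (ii) follows from the fact that in geodesic normal coordinates the metric is Euclidean to first order. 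Once these are in hand, the matching of $2$-jets is immediate and the neighborhood estimate follows by comparing quadratic forms. I would also note the degenerate-looking but harmless point that the error term is genuinely only $o(|z|^2)$ and not $O(|z|^3)$, since $\phi$ is merely smooth, not real-analytic — but this is all that is claimed.
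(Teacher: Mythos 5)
Your argument is essentially the paper's: reduce to the $2$-jet, decompose the real quadratic term of $\phi$ into its Hermitian and pluriharmonic ($\C$-bilinear plus conjugate) parts, and take $u$ to be the holomorphic polynomial whose real part is the pluriharmonic part (the paper reaches the same point by a chain of invariance reductions terminating at $\phi(z)=|z|^2$, $u\equiv 0$). On the factor-of-two you flagged: with the paper's conventions $\omega_\phi = id'd''\phi$ and $g_\phi(X,Y)=\omega_\phi(X,JY)$, the model $\phi(z)=|z|^2$ gives $g_\phi = 2g_{\mathrm{std}}$, so if you normalize $g_\phi(0)=g_{\mathrm{std}}$ then the Hermitian part of $Q$ is $\tfrac12|z|^2$ (not $|z|^2$), which then exactly cancels $-\tfrac12 d_\phi(z,0)^2 = -\tfrac12|z|^2 + o(|z|^2)$.
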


\begin{proof}
The statement depends only on $\phi$ up to second order, so we may assume without loss of generality that $\phi$ is a real degree two polynomial on $\C^n$.  Any real polynomial on $\C^n$ may be expressed uniquely as a polynomial in $z_i$ and $\bar z_i$ with coefficients $c_{i_1,\ldots,i_k,\bar i_1,\ldots,\bar i_\ell}\in\C$ satisfying $c_{i_1,\ldots,i_k,\bar i_1,\ldots,\bar i_\ell}=\ol{c_{i_1,\ldots,i_\ell,\bar i_1,\ldots,\bar i_k,}}$.  In the case of degree two, we thus have
\begin{equation}
   \phi(z) = a + \sum_i \Re a_i z_i + \sum_{i,j} \Re a_{ij} z_i z_j
 + \sum_{i,j} b_{ij} z_i \bar z_j
\end{equation}
where $a \in \R$, $a_i, a_{ij}, b_{ij} \in \C$, and $b_{ij} = \ol{b_{ji}}$. 
The statement is also unaffected by adding the real part of a holomorphic 
function to $\phi$, so we may assume that $a = a_i = a_{ij} = 0$. Finally, the 
statement is unaffected by precomposing $\phi$ with a germ of biholomorphism of 
$\C^n$ near zero, so we may apply an element of $\operatorname{GL}_n(\C)$ so 
that the positive definite Hermitian matrix $(b_{ij})$ becomes the identity 
matrix. Hence we have without loss of generality that $\phi(z) = |z|^2$, for 
which we may take $u \equiv 0$.
\end{proof}

\section{Donaldson's construction} \label{globalconstruction}

We now prove Theorem \ref{quantitativesteinexistence}.

Let us begin by fixing some notation/terminology. We fix a Stein manifold $\ol
V$ and a smooth exhausting $J$-convex function $\phi: \ol V \to \R$. We let $V
:= \{\phi \le 0\}$, so $\partial V = \{\phi = 0\}$. We denote by $g := g_\phi$ 
the induced metric on $\ol V$, with associated distance function $d := d_\phi$. 
We denote by $L := L^\phi$ the associated line bundle.  For any positive real
number $k$, we let $g_k := g_{k\phi} = kg$, $d_k := d_{k\phi} = k^{1/2}d$, and 
$L^k := L^{k\phi}$.

In what follows, we treat $k$ as a fixed real parameter, and most statements 
(in particular, the notations $O(\cdot)$ and $o(\cdot)$) are meant in the limit 
$k \to \infty$ (i.e.\ for $k$ sufficiently large).  Most implied constants are 
independent of $(\ol V, \phi)$ (unless stated otherwise), however how large 
$k$ must be may (and almost always will) depend on $(\ol V, \phi)$.

Near any point $p_0 \in \ol V$, there exists a holomorphic coordinate chart
$\Psi: (U, 0) \to (\ol V, p_0)$, where $U \subseteq \C^n$ is an open subset
containing zero, and a holomorphic function $u: \Psi(U) \to \C$, satisfying:
\begin{itemize}
\item
$B(r) \subseteq U$ for $r^{-1} = O(1)$.
\item
$\Psi^* \phi = a\Re z_1 + O(|z|^2)$ if $p_0 \in \partial V$, 
where $a=\left|d\phi(p_0)\right|$.
\item
$\Psi^* g = g_{\C^n} + O(|z|)$.
\item
$\phi(p) - \frac 34 d(p,p_0)^2 \le \Re u(p) \le \phi(p) - \frac 14 d(p,p_0)^2$.
\end{itemize}
(for the existence of $u$, we appeal to Lemma \ref{localJconvex}). There exists 
such a triple $(U, \Psi, u)$ for which the implied constants above are bounded
as $p_0$ varies over any compact subset of $\ol V$. It is convenient to also
have at our disposal the rescaled coordinates $\Psi_k: (B(2),0) \to (\ol V,p_0)$
defined by $\Psi_k(\cdot) = \Psi(k^{-1/2}\cdot)$ and the rescaled function $ku$ 
(for sufficiently large $k$), which satisfy:
\begin{itemize}
\item
$\Psi_k^* \phi = ak^{-1/2} \Re z_1 + O(k^{-1} |z|^2)$ if $p_0 \in \partial V$, 
where $a = \left| d\phi(p_0) \right|$.
\item
$\Psi_k^* g_k = g_{\C^n} + O(k^{-1/2} |z|)$.
\item
$k\phi(p) - \frac34 d_k(p,p_0)^2 \le \Re ku(p) \le k\phi(p) - \frac 14
d_k(p,p_0)^2$.
\end{itemize}
Now the section $\sigma := e^{\frac12 ku}$ of $L^k$ satisfies
\begin{equation} \label{sigmaisbounded}
   e^{-\frac38 d_k(p,p_0)^2} \le |\sigma(p)| \le
   e^{-\frac18 d_k(p,p_0)^2}
\end{equation}
over its domain of definition $\Psi(U)$. This ``reference section'' provides a
convenient local holomorphic trivialization of $L^k$ over $\Psi_k (B(2))$. We 
also need holomorphic sections of $L^k$ defined on all of $\ol V$ which satisfy 
a decay bound similar to \eqref{sigmaisbounded} over $\{\phi \leq 1\}$ and 
which approximate $\sigma$ over $\Psi_k(B(2))$. That such sections exist is 
the content of the following lemma.

\begin{lemma} \label{peaksections}
Let $(\ol V, \phi)$ be as above. Fix $p_0 \in \{\phi = 0\}$ and consider the 
associated coordinates $\Psi$ and reference section $\sigma$ as above. There 
are holomorphic sections $\tilde\sigma, \tilde\sigma_1, \ldots, \tilde\sigma_n: 
\ol V \to L^k$ satisfying:
\begin{itemize}
\item
$|\tilde\sigma(p)| \leq e^{-\frac19 d_k(p,p_0)^2} + e^{-\eps k}$ over $\{\phi 
\le 1\}$.
\item
$|\tilde\sigma_r(p)| \leq e^{-\frac19 d_k(p,p_0)^2} + e^{-\eps k}$ over $\{\phi 
\le 1\}$ for $r = 1, \ldots, n$.
\item
$\left|\frac{ \tilde\sigma } \sigma \circ \Psi_k - 1\right| \leq e^{-\eps k}$ 
over $B(2)$.
\item
$\left|\frac{ \tilde\sigma_r } \sigma \circ \Psi_k - z_r\right| \leq 
e^{-\eps k}$ over $B(2)$ for $r = 1, \ldots, n$.
\end{itemize}
for some $\eps>0$ depending on $(\ol V,\phi)$ and sufficiently large $k$.
\end{lemma}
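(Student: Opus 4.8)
The plan is to build the sections $\tilde\sigma,\tilde\sigma_1,\dots,\tilde\sigma_n$ by correcting the naive local candidates $\sigma$, $z_1\sigma$, \dots, $z_n\sigma$ (cut off to a ball) using the $d''$-solving machinery of Proposition~\ref{steinsolved}. Concretely, fix a smooth cutoff function $\chi$ on $\C^n$ equal to $1$ on $B(1)$ and supported in $B(\frac 32)$, and transport it to $\ol V$ via $\Psi_k$ to get a function $\chi_k$ supported in $\Psi_k(B(\frac 32))$. Set $s_0 := \chi_k\sigma$ and $s_r := \chi_k z_r\sigma$ (here $z_r$ means $z_r\circ\Psi_k^{-1}$ on the support of $\chi_k$); these are global smooth sections of $L^k$, holomorphic wherever $\chi_k$ is locally constant. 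The errors $u_0 := d'' s_0 = (d''\chi_k)\sigma$ and $u_r := d'' s_r = (d''\chi_k)(z_r\sigma)$ are $(0,1)$-forms supported in the annular region $\Psi_k(B(\frac32)\setminus B(1))$, where $d_k(\cdot,p_0)\ge 1$, so by the estimate \eqref{sigmaisbounded} together with $|d''\chi_k| = O(1)$ (the metric $g_k$ is uniformly comparable to $g_{\C^n}$ on $B(2)$ by the third rescaled bullet), one gets $\int|u_j|^2 = O(e^{-\frac14})$ — more precisely $\int|u_j|^2 \le C e^{-c}$ for absolute constants, which I will package as $\le e^{-2\eps k}$ for a suitable $\eps = \eps(\ol V,\phi)>0$ and large $k$ (note the exponent here is only $O(1)$, not growing with $k$; the $e^{-\eps k}$ gain actually comes in at the next stage, see below — so in fact I want to be more careful and instead note that the annulus can be taken at radius $\sim k^{1/2}\delta$ for a small fixed $\delta$, giving genuine $e^{-ck}$ decay, but the clean way is: enlarge the cutoff to be $1$ on $B(\delta k^{1/2})$ and supported in $B(2\delta k^{1/2})$ in the rescaled picture, i.e.\ $1$ on the $\Psi$-ball of radius $\delta$; then the annulus lives where $d_k(\cdot,p_0)^2 \gtrsim k\delta^2$, yielding $\int|u_j|^2 \le e^{-\eps k}$).

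Next I would apply Proposition~\ref{steinsolved} with $q=1$: since $\phi$ is exhausting and $J$-convex, after possibly rescaling I may assume $g_\phi = g$ is complete and $g_{k\phi}\ge c\cdot g$ for $k$ large (the hypothesis $g_\phi\ge c\cdot g$ of the Proposition is met by $k\phi$ once $k$ exceeds the relevant continuous function), and $d''u_j = 0$ since $u_j = d''s_j$. This produces $\xi_j \in L^2(\ol V, L^k)$ with $d''\xi_j = u_j$ and $\int|\xi_j|^2 \le \int|u_j|^2 \le e^{-\eps k}$. Then $\tilde\sigma := s_0 - \xi_0$ and $\tilde\sigma_r := s_r - \xi_r$ are honest holomorphic sections of $L^k$ over $\ol V$. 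To convert the $L^2$ bound on $\xi_j$ into the pointwise bounds in the statement, I use elliptic regularity / a sub-mean-value inequality: $\xi_j$ is holomorphic (as $d''\xi_j = u_j$ and $u_j \equiv 0$ outside the support of $d''\chi_k$, so $\xi_j$ is holomorphic there; inside, $|u_j|$ is still controlled), so $|\xi_j(p)|^2$ is bounded by a constant times the $L^2$-average of $|\xi_j|^2$ over a $g_k$-unit ball around $p$, which is $\le C e^{-\eps k}$; adjusting $\eps$ absorbs the constant. This gives $|\tilde\sigma(p)| \le |\chi_k\sigma(p)| + e^{-\eps k} \le e^{-\frac 18 d_k(p,p_0)^2} + e^{-\eps k}$, and since $\frac 18 > \frac 19$ I can be generous and the stated $e^{-\frac19 d_k(\cdot,p_0)^2}+e^{-\eps k}$ bound holds over $\{\phi\le 1\}$ (the constant $\frac 19$ versus $\frac 18$ leaves room for the elliptic-estimate averaging to not spoil the exponent, since moving the basepoint by a bounded $g_k$-distance changes $d_k(\cdot,p_0)^2$ by $O(d_k(\cdot,p_0))$, absorbable into the exponent gap); similarly for $\tilde\sigma_r$, using $|z_r|\le 2\delta k^{1/2}$ on the support which is a subexponential factor. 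Finally, the comparison bounds over $B(2)$: on $\Psi_k(B(1))$ we have $\chi_k\equiv 1$, so $\tilde\sigma/\sigma\circ\Psi_k - 1 = -\xi_0/\sigma\circ\Psi_k$, and by \eqref{sigmaisbounded}, $|\sigma\circ\Psi_k|\ge e^{-\frac 38\cdot 4} = e^{-3/2}$ on $B(2)$, so $|\xi_0/\sigma\circ\Psi_k|\le e^{3/2}\cdot C e^{-\eps k} \le e^{-\eps' k}$ after shrinking $\eps$; likewise $\tilde\sigma_r/\sigma\circ\Psi_k - z_r = -\xi_r/\sigma\circ\Psi_k$ is $\le e^{-\eps k}$. (One subtlety: the stated comparison is over $B(2)$, but $\chi_k$ is only $1$ on $B(\delta k^{1/2})\supseteq B(2)$ for large $k$ — fine, since $\delta$ is a fixed positive constant and $k\to\infty$.)

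The main obstacle, and the step requiring genuine care, is the elliptic-regularity passage from the $L^2$ bound $\int|\xi_j|^2\le e^{-\eps k}$ to the pointwise bound $|\xi_j(p)|\le e^{-\eps' k}$ \emph{with the loss absorbed uniformly in $k$ and in the basepoint $p_0$ over compact sets}. Since $\xi_j$ is not globally holomorphic (it is holomorphic only off the thin support of $d''\chi_k$, where $u_j$ is supported), one needs the interior elliptic estimate for $d''$ — schematically $\|\xi_j\|_{C^0(B_{g_k}(p,\frac12))} \le C(\|\xi_j\|_{L^2(B_{g_k}(p,1))} + \|u_j\|_{C^k(B_{g_k}(p,1))})$ — and must check that the Sobolev/elliptic constant $C$ is uniform once distances are measured in $g_k$ (equivalently, after rescaling by $k^{1/2}$ the metric is $C^\infty_{loc}$-close to $g_{\C^n}$, so the constant is essentially the Euclidean one). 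The point $p$ where one wants the bound may be at $g_k$-distance $\gg 1$ from the support of $u_j$, in which case $\xi_j$ is genuinely holomorphic near $p$ and the mean-value inequality suffices; when $p$ is near that support, $u_j$ itself already has the exponential smallness, so both contributions are controlled. I would organize this as a short standalone lemma on uniform elliptic estimates in the rescaled picture and then the rest is bookkeeping with the constants $\frac19 < \frac18 < \frac38$.
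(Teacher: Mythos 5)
Your overall strategy --- cut off a local holomorphic model, correct it to a global holomorphic section via the $\bar\partial$-solvability of Proposition~\ref{steinsolved}, and upgrade the resulting $L^2$ bound to a pointwise bound --- is the same as the paper's, and your self-correction about the scale of the cutoff (a ball of fixed $d$-radius $\delta$, i.e.\ $d_k$-radius $\asymp k^{1/2}$) is exactly right.

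However, the way you invoke Proposition~\ref{steinsolved} has a genuine gap. You apply it to the weight $k\phi$ with background metric $g=g_\phi$, claiming the hypothesis is satisfied ``once $k$ exceeds the relevant continuous function $c$''. But $g_{k\phi}\ge c\cdot g$ reads $k\ge c(p)$ at every $p\in\ol V$, and $c$ is a continuous function on the \emph{noncompact} manifold $\ol V$ which is in general unbounded (the proof of Proposition~\ref{steinsolved} takes $c=1+|\Lambda|\,|\Theta(\Omega^{0,q})|$, which involves the curvature of $g$ and need not be bounded). No finite $k$ can dominate an unbounded function, and multiplying $\phi$ by a constant scales $g_\phi$ by the same constant, so ``rescaling'' does not help. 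This is precisely why the paper introduces an auxiliary exhausting $J$-convex $\phi_1$ agreeing with $\phi$ on $\{\phi\le 2\}$ but growing much faster outside, so that $g_{\phi_1}$ eventually dominates $c\cdot g_\phi$; Proposition~\ref{steinsolved} is applied with weight $k\phi_1$ (so $g_{k\phi_1}\ge c\cdot g$ holds for all large $k$), and the resulting weighted bound $\|e^{\frac12 k(\phi-\phi_1)}\xi\|_2\le e^{-\eps k}$ reduces to the desired unweighted bound exactly on $\{\phi\le 2\}$, which is where every subsequent pointwise estimate is taken. Without the $\phi_1$ modification you cannot legitimately invoke Proposition~\ref{steinsolved}, so this is not a technicality but a missing ingredient.

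A secondary point: you propose a full interior elliptic estimate for $\bar\partial$ (with $C^\ell$ control of $u_j$) to bound $\xi_j$ pointwise, since $\xi_j$ is not holomorphic on the support of $d''\chi_k$. The paper avoids this entirely. For the comparison over $B(2)$ it uses that $\xi$ \emph{is} holomorphic wherever $\beta\equiv 1$ (hence on $\Psi_k(B(3))$ for large $k$), so the sub-mean-value property of $\xi/\sigma\circ\Psi_k$ gives the $C^0$ bound directly from $L^2$. For the decay over $\{\phi\le 1\}$ it bounds the \emph{globally} holomorphic $\tilde\sigma=\beta\sigma+\xi$ in a local reference trivialization $\sigma'$ centered at each point $p$, again using only the mean-value inequality for holomorphic functions. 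This sidesteps the non-holomorphic region of $\xi$ and removes any need for higher-order estimates on $u_j$; you may want to restructure your final step accordingly.
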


\begin{proof}
Fix a smooth cutoff function $\beta: \ol V \to [0,1]$ supported inside $\Psi(U)$
which equals $1$ in a neighborhood of $p_0$. Now $\|d'' (\beta  \sigma)\|_2 \le
e^{-\eps k}$ in the fixed metric $g$ for sufficiently large $k$ and some $\eps
> 0$ depending on $(\ol V, \phi)$.

Fix a smooth exhausting $J$-convex function $\phi_1: \ol V \to \R$ which
coincides with $\phi$ over $\{\phi \le 2\}$ and for which $g_{k\phi_1} \ge c
\cdot g$ for sufficiently large $k$ (for $c$ as in Proposition \ref
{steinsolved}). We apply Proposition \ref{steinsolved} to $(\ol V, g, k\phi_1)$ 
and conclude that there exists a section $\xi$ of $L^k$ for which $\beta \sigma 
+ \xi$ is holomorphic and $\|e^{\frac12  k \cdot (\phi-\phi_1)}\xi\|_2 \le
\|d'' (\beta \sigma)\|_2 \leq e^{-\eps k}$.

Let us now show that $\tilde\sigma := \beta \sigma + \xi$ satisfies the desired 
properties.  Over the set where $\beta=1$, the section $\xi$ is holomorphic. In
particular, the function $\frac \xi \sigma \circ \Psi_k$ is holomorphic over 
$B(3)$ (for sufficiently large $k$).  We have $\|\frac \xi \sigma \circ \Psi_k\|
_{ B(3), 2 } \leq e^{-\eps k}$, from which it follows that $|\frac \xi \sigma
\circ \Psi_k| \leq e^{-\eps k}$ over $B(2)$ (for a possibly smaller $\eps>0$ and
larger $k$) since $\frac \xi \sigma \circ \Psi_k$ is holomorphic.  Thus we have 
$\left|\frac{ \tilde\sigma } \sigma \circ \Psi_k - 1\right| \leq e^{-\eps k}$
over $B(2)$.

Now let $p \in \{\phi \leq 1\}$ and consider the associated coordinates $\Psi'$ 
and reference section $\sigma'$ as above. We have $\|\frac{ \tilde\sigma }
{ \sigma' } \circ \Psi_k'\|_{ B(3),2 } = O(e^{-\frac18 d_k(p,p_0)^2} +
e^{-\eps k})$, from which it follows that $|\tilde\sigma(p)| = O (e^{-\frac18
d_k(p,p_0)^2} + e^{-\eps k})$ (since $\frac{ \tilde\sigma }{ \sigma' } \circ
\Psi_k'$ is holomorphic), which gives the desired decay bound on $\tilde\sigma$.

The argument for $\{\tilde\sigma_r\}_{1 \leq r \leq n}$ is identical, with 
$(z_r \circ \Psi_k^{-1}) \cdot \sigma$ in place of $\sigma$.
\end{proof}

It is helpful to rephrase Theorem \ref{quantitativesteinexistence} as follows in
terms of the line bundle $L^k$ and the rescaled metric $g_k$ on $\ol V$.

\begin{theorem} \label{quantitativesteinexistencerescaled}
Let $\ol V$ be a Stein manifold, equipped with a smooth exhausting $J$-convex
function $\phi: \ol V \to \R$. For every sufficiently large real number $k$,
there exists a holomorphic section $s: \ol V \to L^k$ such that:
\begin{itemize}
\item
$|s(p)| \le 1$ for $p \in \{\phi \le 1\}$.
\item
$|s(p)| + |ds(p) \rst \xi| > \eta$ for $p \in \{\phi = 0\}$ ($ds$ measured in 
the metric induced by $k\phi$).
\end{itemize}
where $\xi$ denotes the Levi distribution on $\{\phi = 0\} \subseteq \ol V$, and 
$\eta>0$ is a constant depending only on the dimension of $\ol V$.
\end{theorem}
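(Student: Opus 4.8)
The plan is to reformulate the two equivalent statements (Theorem~\ref{quantitativesteinexistence} and Theorem~\ref{quantitativesteinexistencerescaled}) in the rescaled geometry of $(\ol V, g_k, L^k)$ and then run Donaldson's quantitative transversality argument to produce a section of $L^k$ that stays bounded over $\{\phi\le 1\}$ while being quantitatively far from failing transversality over the real hypersurface $\{\phi=0\}$. The equivalence of the two theorem statements is routine once one notes that a holomorphic section $s$ of $L^k$ is the same data as a holomorphic function $f:=s/\text{``}1\text{''}$, that $|s|=|f|\cdot e^{-\frac12 k\phi}$ (since the Hermitian metric on $L^k$ is $e^{-\frac12 k\phi}|\cdot|_\C$), and that over $\{\phi=0\}$ the transversality condition $|s|+|ds\rst\xi|>\eta$ translates directly into $|f|+k^{-1/2}|df\rst\xi|>\eta$ once one carefully tracks the Chern connection $d_{L^k}=d_\C-kd'\phi$, whose correction term $kd'\phi$ vanishes on $\xi$ along $\{\phi=0\}$ up to the relevant order. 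So I will henceforth work only with Theorem~\ref{quantitativesteinexistencerescaled}.

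The construction proceeds by the standard Donaldson globalization scheme. First I would cover a neighborhood of $\{\phi=0\}$ (more precisely, the relevant compact piece of $\ol V$) by $O(k^n)$ balls of fixed $g_k$-radius centered at points $p_\alpha$, with controlled overlap, chosen so that their centers form a $k$-independent ``net'' in $\{\phi=0\}$ after rescaling. At each center $p_0=p_\alpha$ I invoke Lemma~\ref{peaksections} to obtain the peak sections $\tilde\sigma,\tilde\sigma_1,\dots,\tilde\sigma_n$, which in the rescaled chart $\Psi_k$ approximate $1,z_1,\dots,z_n$ to exponential accuracy and decay like $e^{-\frac19 d_k(\cdot,p_0)^2}$ globally over $\{\phi\le1\}$. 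These give, locally, enough ``coordinates'' on the space of sections to realize any generic quantitative-transversality condition. The target transversality condition here is: $(s,\nabla s\rst\xi)$ should not be too close to zero, i.e.\ I want $|s|+|\nabla s\rst\xi|$ bounded below over $\{\phi=0\}$. Working in the local trivialization by $\sigma$, writing $s=\sigma\cdot\hat s$ with $\hat s$ holomorphic on $B(2)$, this amounts to a quantitative transversality-to-zero statement for the map $z\mapsto(\hat s(z),\partial_\xi\hat s(z))$ restricted to the real hypersurface $\{\phi=0\}\cap B(2)$, which in the rescaled chart is (up to $O(k^{-1/2})$ errors) the affine hyperplane $\{a k^{-1/2}\Re z_1 = \text{const}\}$ — but after absorbing the $k^{-1/2}$ this is just a fixed real hyperplane. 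This is exactly the setting where one needs the quantitative Sard theorem for \emph{real} polynomials (Mohsen's observation): the relevant jet is a holomorphic polynomial, but it is being made transverse along a real hypersurface.

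The key step — and the main obstacle — is the inductive globalization: one processes the balls $p_\alpha$ one at a time (or in $O(1)$ color classes), and at each stage perturbs the current section $s$ by a small multiple $w_\alpha\tilde\sigma_\alpha + \sum_r w_{\alpha,r}\tilde\sigma_{\alpha,r}$ to achieve the transversality estimate near $p_\alpha$, while the exponential decay of the peak sections ensures that this perturbation only mildly degrades the estimates already achieved near earlier centers. The bookkeeping here is the heart of Donaldson's method: one must choose a slowly-decaying sequence of thresholds $\eta_j$ and show that the accumulated damage from all later perturbations, controlled by $\sum_\alpha (\text{perturbation size})\cdot e^{-\frac19 d_k(p_\alpha,\cdot)^2}$, is dominated by the gaps between successive thresholds; this is where one uses that the net has polynomial-in-$k$ cardinality with a fixed $g_k$-geometry, so the relevant sums are $O(1)$. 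At the quantitative-transversality step for a single ball one applies the Auroux–Mohsen version of the quantitative Sard lemma to the restriction to the real hyperplane. I would also need to check that the bound $|s|\le1$ over $\{\phi\le1\}$ is maintained throughout: this follows because each peak section is $\le1$ there (up to $e^{-\eps k}$) and the total perturbation has $\ell^1$-small coefficients, and because away from $\{\phi\le1\}$ there is simply no constraint. The whole argument is carried out in \S\ref{globalconstruction} through \S\ref{qtranssec}; the plan above is the skeleton, and the genuinely delicate part is matching the decay rate $\frac19$ of the peak sections against the perturbation scale so that the induction closes.
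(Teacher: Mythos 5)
Your proposal is correct and follows essentially the same Donaldson-style globalization strategy as the paper's proof in \S\ref{globalconstruction}--\S\ref{qtranssec}: a net of peak sections over $\partial V$, a coloring with a $k$-independent number of colors, an inductive perturbation scheme with slowly-decaying thresholds $\eta_j$, and the quantitative Sard lemma (Proposition~\ref{qtrans}, following Auroux and Mohsen) applied to the restriction to the real hypersurface $\{\Re z_1=0\}$. The only cosmetic discrepancies are minor: the paper's net consists of points on $\partial V$ with cardinality bounded crudely by $O(k^{2n-1})$ rather than your $O(k^n)$ (either bound suffices since only polynomial growth in $k$ is needed), and the final bound $|s|\le 1$ is achieved by dividing the sum $\sum_i s_i$ by an $n$-dependent constant rather than by any smallness of the coefficients.
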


\begin{proof}
The proof follows Donaldson \cite[\S 3]{donaldsonI}, as simplified by Auroux
\cite{aurouxremark}.

\textbf{Part I.}
Fix a maximal collection of points $p_1, \ldots, p_N \in \partial V$ whose 
pairwise $d_k$-distances are $\geq 1$. Since this collection is maximal, the
unit $d_k$-balls $B_i$ centered at the $p_i$'s cover $\partial V$. The
$d_k$-balls of radius $1/2$ centered at the $p_i$'s are disjoint, so by volume
considerations, the total number of points satisfies $N = O_{(\ol V,\phi)}
(k^{2n-1})$, where $n$ is the complex dimension of $\ol V$.

We now specify the form of the section $s: \ol V \to L^k$ we will construct.
For each $p_i$, we will define a holomorphic section $s_i: \ol V \to L^k$
satisfying the bound
\begin{equation} \label{boundonsi}
   |s_i(p)| \leq e^{-\frac19 d_k(p,p_i)^2} + e^{-\eps k} \quad \text{for }
   p \in \{\phi \le 1\}
\end{equation}
(for some $\eps>0$ depending on $(\ol V, \phi)$) and we will let
\begin{equation}
   s := \sum_{i=1}^N s_i .
\end{equation}
Let us observe immediately that this bound on $|s_i|$ implies that
\begin{equation*}
   |s(p)| \leq \sum_{i=1}^N e^{-\frac19 d_k(p,p_i)^2} + e^{-\eps k} \approx
   \int_{\partial V} e^{-\frac19 d_k(p,p_0)^2} dg_k(p_0)
 + O_{(\ol V,\phi)} (k^{2n-1} e^{-\eps k}) = O(1)
\end{equation*}
for $p \in \{\phi \le 1\}$. In particular, this ensures the first condition
$|s(p)| \le 1$ (after dividing by a constant factor depending only on $n = \dim
\ol V$).

\begin{remark}[$C^0$-bounds imply $C^\infty$-bounds for holomorphic functions]
For a holomorphic function $f$ defined on $B(1+\epsilon) \subseteq \C^n$, we 
have
\begin{equation}\label{ellipticestimate}
   \|f\|_{C^\ell(B(1))} \leq c_{n,\ell}
   \Bigl(1 + \frac 1 {\epsilon^\ell}\Bigr) \|f\|_{C^0(B(1+\epsilon))}.
\end{equation}
(Indeed, we have $|D^\ell f(0)| \leq c_{n,\ell} \sup_{B(1)}f$ by the Cauchy
integral formula, and applying this to balls of radius $\epsilon>0$ along with
the maximum principle yields the above estimate).

For simplicity of notation, we have stated the upper bounds in
\eqref{sigmaisbounded}, Lemma \ref{peaksections}, \eqref{boundonsi}, and
\eqref{etaboundforsi} below only in the $C^0$-norm, though of course we will
often need to use the resulting bounds on higher derivatives implied by
\eqref{ellipticestimate}.  If we were working in the approximately holomorphic
setting, we would need to explicitly bound the higher derivatives up to some 
appropriate fixed finite order.
\end{remark}

\begin{definition}
A section $s: V \to L^k$ will be called \emph{$\eta$-transverse at $p \in 
\partial V$} iff $|s(p)| + |ds(p) \rst \xi| > \eta$.  The property of being 
$\eta$-transverse is obviously stable under $C^1$-perturbation, and for 
holomorphic sections it is in fact stable under $C^0$-perturbation by 
\eqref{ellipticestimate} with $\ell=1$ as long as the perturbation is defined in a 
fixed neighborhood of $p$.
\end{definition}

\begin{remark}
This particular quantitative transversality condition was first considered by 
Mohsen \cite{mohsenpreprint}, and is closely related to those used by Donaldson 
and Auroux.  Donaldson \cite{donaldsonI} called a section $s: V \to L^k$ 
$\eta$-transverse at $p$ iff either $|s(p)| \ge \eta$ or $|ds(p)| \ge \eta$ 
(this is equivalent, up to a constant, to requring $|s(p)| + |ds(p)| \ge \eta$).
Mohsen \cite{mohsenpreprint} generalized this notion to quantitative 
transversality relative to a given submanifold $Y$.  Specifically, he called a 
section $\eta$-transverse relative to $Y$ iff either $|s(p)| \ge \eta$ or $ds(p)
\rst {TY}$ has a right inverse of norm $\le \eta^{-1}$.  In the case of the 
submanifold $\partial V \subseteq \ol V$ and an (approximately) holomorphic 
section $s$, this condition is equivalent, up to a constant, to our formulation 
$|s(p)| + |ds(p) \rst \xi| > \eta$ (see \cite[\S2]{mohsenpreprint}).  Thus, 
Theorem \ref{quantitativesteinexistencerescaled} can be regarded as a 
holomorphic version of Mohsen's transversality theorem for hypersurfaces.
\end{remark}

\textbf{Part II.}
Our goal is to construct sections $s_i$ satisfying the decay bound 
\eqref{boundonsi} so that $s$ is $\eta$-transverse over $\partial V$ for some 
$\eta>0$ depending only on $n$.

We will define the sections $s_i$ in a series of steps, at each step achieving
(quantitative) transversality over some new part of $\partial V$, while
maintaining (quantitative) transversality over the part of $\partial V$ already
dealt with. The most naive version of this procedure, choosing $s_i$ to achieve 
transversality over $B_i$ while maintaining transversality over $B_1, \ldots,
B_{i-1}$, runs into trouble, essentially due to the rather large number of
steps. Instead, we first construct a suitable coloring of the $p_i$'s, and then 
in the inductive procedure we choose the $s_i$'s for the $p_i$'s of a particular
color simultaneously (so there is one step per color). For this to work, we must
ensure that points of the same color are sufficiently far apart.

Let $D < \infty$ be a (large) positive real number, to be fixed (depending only 
on $n$) at the end of the proof. We color the $p_i$'s so that the $d_k$-distance
between any pair of points of the same color is at least $D$. More precisely, we
construct such a coloring by iteratively choosing a maximal collection of yet
uncolored points $p_i$ with pairwise distances $\geq D$ and then coloring this
collection with a new color. Because each color was chosen from a maximal
collection of yet uncolored points, it follows that the ball of radius $D$
centered at any point colored with the final color contains points of every
other color. Hence by volume considerations, it follows that the total number of
colors $M$ is $O(D^{2n-1})$. Let us denote the coloring function by $c: \{1,
\ldots, N\} \to \{1, \ldots, M\}$.

\textbf{Part III.}
Let $\p < \infty$ and $A < \infty$ be (large) positive real numbers, to be fixed 
(depending only on $n$) later in the proof. To be precise, we must first choose 
$A$ (depending on $n$), then choose $\p$ (depending on $n$ and $A$), and finally 
choose $D$ (depending on $n$, $A$, and $\p$).

It suffices to construct sections $s_i$ so that:
\begin{itemize}
\item
For all $j \in \{1, \ldots, M\}$ and $c(i) = j$, we have
\begin{equation} \label{etaboundforsi}
   |s_i(p)| \leq \frac1A \eta_{j-1}
    \left[e^{-\frac19 d_k(p,p_i)^2} + e^{-\eps k}\right] \quad 
   \text{for }p \in \{\phi \le 1\} .
\end{equation}
\item
For all $j \in \{1, \ldots, M\}$, we have
\begin{equation}
   s^j := \sum_{\begin{smallmatrix} i=1 \cr c(i) \leq j\end{smallmatrix}}^N s_i
   \quad \text{is $\eta_j$-transverse over} \quad
   X_j := \bigcup_{\begin{smallmatrix} i=1 \cr c(i) \leq j \end{smallmatrix}}^N
   B_i .
\end{equation}
\end{itemize}
Here $\frac14 = \eta_0 > \eta_1 > \cdots > \eta_M > 0$ are defined by $\eta_j 
= \eta_{j-1} \left|\log\eta_{j-1}\right|^{-\p}$ (the reason for this particular 
choice will become apparent later).

We construct such sections $s_i$ by induction on $j$. More precisely, it
suffices to suppose that sections $s_i$ are given for $c(i) \le j-1$ (satisfying
the above in the range $\{1, \ldots, j-1\}$) and to construct sections $s_i$ for
$c(i) = j$ (satisfying the above in the range $\{1, \ldots, j\}$).

\textbf{Part IV.}
As a first step, let us fix an index $i$ with $c(i) = j$, and construct a
section $s_i$ satisfying \eqref{etaboundforsi} so that $s^{j-1} + s_i$ is
$\eta_{j-1} \left|\log\eta_{j-1}\right|^{-\p}$-transverse over $B_i$ (for some 
$\p < \infty$ depending on $n$ and $A$).

Fix a triple $(U, \Psi, u)$ based at $p_i \in \partial V$ (as discussed at the 
beginning of this section), with rescaling $\Psi_k$ and reference section 
$\sigma = e^{\frac12 ku}$. We will use the local coordinates $\Psi_k$ and the
reference section $\sigma$ to measure the transversality of $s^{j-1} + s_i$ over
$B_i$. Precisely, we claim that it suffices to construct $s_i$ satisfying \eqref
{etaboundforsi} so that
\begin{equation} \label{dividedsection}
   \frac{ s^{j-1} + s_i } \sigma \circ \Psi_k
\end{equation}
is $\eta_{j-1} \left|\log\eta_{j-1}\right|^{-\p}$-transverse over $B(3/2) \cap 
\Psi_k^{-1}(\partial V)$. Indeed, $\sigma$ is bounded above and below by \eqref
{sigmaisbounded}, so using \eqref{ellipticestimate} with $\ell=1$ this implies 
that $s^{j-1} + s_i$ is $\frac1C \eta_{j-1}
\left|\log\eta_{j-1}\right|^{-\p}$-transverse over $B_i$ for some constant 
$C < \infty$ depending only on $n$ (which we can absorb into the last factor 
by increasing $\p$).

Now as $k \to \infty$, the real hypersurface $B(3/2) \cap \Psi_k^{-1} (\partial
V)$ approaches $B(3/2) \cap \{\Re z_1 = 0\}$ in $C^\infty$, uniformly over the
choice of $p_i \in \partial V$. Since \eqref{dividedsection} is bounded
uniformly over $B(2)$, using \eqref{ellipticestimate} with $\ell=2$ we see that 
$\eta$-transversality over $B(3/2) \cap \{\Re z_1 = 0\}$
implies $(\eta-o(1))$-transversality over $B(3/2) \cap \Psi_k^{-1}(\partial V)$ 
(of course, the condition of $\eta$-transversality over a real hypersurface 
is with respect to its own Levi distribution).  
Since the number of colors $M$ is bounded independently of $k$, it follows that
$\eta_{j-1}$ is bounded away from zero as $k \to \infty$. Hence it suffices to
show that \eqref{dividedsection} is 
$\eta_{j-1}\left|\log\eta_{j-1}\right|^{-\p}$-transverse
over $B(3/2) \cap \{\Re z_1 = 0\}$ (we again lose a constant on the
transversality estimate, but as before it can be absorbed into the exponent
$\p$).

For any vector $w = (w_0, w_2, \ldots, w_n) \in \C^n$, we consider the 
holomorphic function on $B(2)$ given by
\begin{equation} \label{newtransverselocal}
   \frac{ s^{j-1} } \sigma \circ \Psi_k + w_0 + w_2z_2 + \cdots + w_nz_n .
\end{equation}
A quantitative transversality theorem, Proposition \ref{qtrans} (whose proof
we defer to later) says that for $\frac 13 > \eta > 0$, there exists a vector
$w = (w_0, w_2, \ldots, w_n) \in \C^n$ with $|w| \leq \eta$ so that \eqref
{newtransverselocal} is $\eta \left|\log\eta\right|^{-\p}$-transverse over 
$B(3/2) \cap \{\Re z_1 = 0\}$ (for some $\p < \infty$ depending only on $n$).
This fact that with a perturbation of size $\eta$ we can achieve 
$\eta \left|\log\eta\right|^{-\p}$-transversality is what forces the choice 
of recursion $\eta_j = \eta_{j-1} \left|\log\eta_{j-1}\right|^{-\p}$ 
declared above.

Let $\tilde\sigma$ and $\{\tilde\sigma_r\}_{1\leq r\leq n}$ denote the ``peak 
sections'' based at $p_0=p_i$ from Lemma \ref{peaksections}.  We define 
$s_i := w_0\tilde\sigma + w_2\tilde\sigma_2 + \ldots + w_n \tilde\sigma_n$ 
(for $w$ to be determined), so now \eqref{dividedsection} equals
\begin{equation} \label{newdividedsection}
   \frac{ s^{j-1} } \sigma \circ \Psi_k + w_0 \frac{\tilde\sigma}\sigma 
   \circ \Psi_k + w_2 \frac{\tilde\sigma_2}\sigma \circ \Psi_k + \cdots + 
   w_n \frac{\tilde\sigma_n}\sigma \circ \Psi_k .
\end{equation}
There is a constant $C < \infty$ (depending only on $n$) such that for 
$|w| \leq \frac 1 {A \cdot C}
\eta_{j-1}$, the section $s_i$ satisfies the decay bound \eqref{etaboundforsi}. 
By Proposition \ref{qtrans}, there exists $|w| \leq \frac 1 {A \cdot C}
\eta_{j-1}$ for which \eqref{newtransverselocal} is $\eta_{j-1}
\left|\log\eta_{j-1}\right|^{-\p}$-transverse over $B(3/2) \cap \{\Re z_1 = 0\}$ 
(absorbing constants into $\p$). It follows that \eqref{newdividedsection} (and 
hence \eqref{dividedsection}) is $(\eta_{j-1}
\left|\log\eta_{j-1}\right|^{-\p} - O(e^{-\eps k}))$-transverse over $B(3/2) 
\cap \{\Re z_1 = 0\}$, which is enough.

\textbf{Part V.}
We have constructed sections $s_i$ for $c(i) = j$ with the property that
$s^{j-1} + s_i$ is $\eta_{j-1} \left|\log\eta_{j-1}\right|^{-\p}$-transverse over
$B_i$ (for some $\p < \infty$ depending on $n$ and $A$). Now let us argue that
with this choice of sections, $s^j$ is $\eta_j$-transverse over $X_j$ (for some 
possibly different $\p < \infty$ depending on $n$ and $A$).

We know that $s^j$ differs from $s^{j-1}$ over $X_{j-1}$ by $O (\frac1A
\eta_{j-1})$ and that $s^{j-1}$ is $\eta_{j-1}$-transverse over $X_{j-1}$. It
follows that $s^j$ is $(1 - O (\frac1A)) \eta_{j-1}$-transverse over $X_{j-1}$, 
which gives $\eta_j$-transversality over $X_{j-1}$ once $A$ and $\p$ are large.

We know that $s^j$ differs from $s^{j-1} + s_i$ over $B_i$ by $O (\eta_{j-1}
e^{-\frac19 D^2})$ and that $s^{j-1} + s_i$ is $\eta_{j-1} 
\left|\log\eta_{j-1}\right|^{-\p}$-transverse over $B_i$. It follows that $s^j$ 
is $(\eta_{j-1} \left|\log\eta_{j-1}\right|^{-\p} - 
O (\eta_{j-1} e^{-\frac19 D^2}))$-transverse over $B_i$.
This gives $\eta_j$-transversality over $B_i$ (increasing $\p$ to make up for
the lost constant factor) as long as we have
\begin{equation} \label{keyinequality}
   e^{-\frac19 D^2} \le \frac1B \left|\log\eta_{j-1}\right|^{-\p}
\end{equation}
for some constant $B < \infty$ depending only on $n$.

Hence we conclude that the entire construction succeeds as long as \eqref
{keyinequality} holds for $j = 1, \ldots, M$. It is elementary to observe that 
the recursive definition of $\eta_j$ yields rough asymptotics $\eta_j \approx 
e^{-c \cdot j \log j}$ ($c$ depending on $\p$). Thus it suffices to ensure that
\begin{equation}
   e^{-\frac19 D^2} \le \frac1 {B'} (M \log M)^{-\p}
\end{equation}
for some $B' < \infty$ depending on $n$ and $\p$. We observed earlier that $M =
O(D^{2n-1})$, so this inequality is satisfied once $D$ is sufficiently large.
\end{proof}

\begin{remark}
A common theme in $h$-principle arguments à la Gromov, in which we want to
construct some structure globally on a given manifold $X$, is to extend the
desired structure to larger and larger subsets $\cdots \subseteq X_{j-1} 
\subseteq X_j \subseteq \cdots$ in a series of steps. This reduces the desired 
result to an extension problem from $X_{j-1}$ to $X_j$ (see for example 
Eliashberg--Mishachev \cite{hprinciple}). For example, $X_j$ is usually taken 
to be (an open neighborhood of) the $j$-skeleton of $X$ (under a fixed 
triangulation), the point being that now the \emph{topology} governing the 
extension from $X_{j-1}$ to $X_j$ is easy to understand. Donaldson's method, 
used in the proof above, employs a similar inductive procedure, but where one 
instead controls the \emph{geometry} governing the extension from $X_{j-1}$ 
to $X_j$ (the key point being that we can do local modifications 
\emph{independently} at any collection of points which are sufficiently far away
from each other).
\end{remark}

\section{Quantitative transversality theorem}\label{qtranssec}

We now prove the quantitative transversality theorem (Proposition \ref{qtrans}) 
which was the key technical ingredient in Donaldson's construction as used in 
the proof of Theorem \ref{quantitativesteinexistencerescaled}. The statement and
proof are similar to Auroux \cite[\S 2.3]{aurouxII}; see also 
\cite{aurouxremark}. A key ingredient is an upper bound on the volume of tubular
neighborhoods of real algebraic sets (Lemma \ref{wongkewtubularneighborhood}) 
due to Wongkew \cite{wongkew}.

\begin{proposition} \label{qtrans}
Let $B(1) \subseteq B(1+\eps) \subseteq \C^n$ be the balls centered at zero. 
Fix a holomorphic function $f: B(1+\eps) \to \C$ with $|f| \le 1$. For a vector
$w = (w_0, w_2, \ldots, w_n) \in \C^n$, we define
\begin{equation}
   f_w := f + w_0 + w_2 z_2 + \cdots + w_n z_n.
\end{equation}
For all $1/3 > \eta > 0$, there exists a vector $w \in \C^n$ satisfying $|w| \le
\eta \left|\log\eta\right|^\p$ such that:
\begin{itemize}
\item
$|f_w(z)| + |df_w(z) \rst \xi| > \eta$ for $z \in B(1)$ with $\Re z_1 = 0$.
\end{itemize}
where $\xi$ denotes the Levi distribution of $\{z \in B(1): \Re z_1 = 0\}$, and 
$\p < \infty$ depends only on the dimension $n$ and $\eps>0$.
\end{proposition}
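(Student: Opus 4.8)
The plan is to prove Proposition~\ref{qtrans} by reducing the transversality condition on the real hypersurface $\{\Re z_1 = 0\} \cap B(1)$ to a quantitative Sard-type statement for an auxiliary map, and then applying the real algebraic version of the quantitative Sard theorem (in the spirit of Auroux and Mohsen, using Wongkew's volume bound on tubular neighborhoods of real algebraic sets). First I would fix coordinates adapted to the hypersurface $H := \{\Re z_1 = 0\}$: parameterize $H \cap B(1)$ by $(\Im z_1, z_2, \dots, z_n) \in \R \times \C^{n-1}$, and identify the Levi distribution $\xi$ along $H$ with the complex span of $\partial_{z_2}, \dots, \partial_{z_n}$ (up to an $O(\eps)$-small correction that is harmless since $H$ here is genuinely linear). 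The transversality requirement $|f_w(z)| + |df_w(z)\rst\xi| > \eta$ then says precisely that the holomorphic map $G_w := (f_w, \partial_{z_2} f_w, \dots, \partial_{z_n} f_w) : B(1+\eps) \to \C^n$, restricted to $H$, stays at distance $> \eta$ from the origin in $\C^n$ (up to constants absorbing the norm equivalence). So the goal becomes: find $w$ with $|w| \le \eta|\log\eta|^\p$ such that $0 \notin G_w(H \cap B(1)) + \eta \cdot (\text{unit ball})$, equivalently $-w' \notin G_0(H\cap B(1)) + \eta B$ where $w' = (w_0, w_2, \dots, w_n)$ is the natural shift (note $\partial_{z_r}(w_0 + \sum w_j z_j) = w_r$ is constant, so adding the linear perturbation shifts $G_0$ by the constant vector $w'$).

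The main step is thus to show that the $\eta$-neighborhood of the image $G_0(H \cap B(1))$ does not cover the ball $\{|w'| \le \eta|\log\eta|^\p\}$ in $\C^n = \R^{2n}$. The difficulty is that $G_0$ is only holomorphic, not polynomial, so one cannot apply the algebraic Sard theorem directly. The standard remedy (Donaldson, Auroux) is polynomial approximation: since $f$ is holomorphic with $|f| \le 1$ on $B(1+\eps)$, by Cauchy estimates its Taylor polynomial $P$ of degree $d = O(\log(1/\eta))$ approximates $f$ to within $\eta/10$ in $C^1$ on $B(1)$ (the tail is geometric). Replacing $f$ by $P$ changes $G_0$ by at most $\eta/10$ in sup norm, so it suffices to prove the covering-failure statement for $G_0^P := (P, \partial_{z_2}P, \dots, \partial_{z_n}P)$ with $\eta$ replaced by a slightly larger constant multiple of $\eta$. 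Now $G_0^P$ restricted to $H$ is a \emph{polynomial} map from a $(2n-1)$-dimensional domain to $\R^{2n}$ of degree $O(d)$. The key geometric input: the set of points in $\R^{2n}$ whose fiber under $G_0^P$ meets $H \cap B(1)$ in a set that is ``$\eta$-degenerate'' is contained in an $\eta$-neighborhood of a real algebraic set whose degree is polynomially bounded in $d$, and by Lemma~\ref{wongkewtubularneighborhood} (Wongkew) such a neighborhood has volume $O((\deg)^{O(1)} \cdot \eta^{\text{codim}})$ — here, since the domain has dimension $2n-1 < 2n$, even the full image $G_0^P(H \cap B(1))$ is contained in (a neighborhood of) a hypersurface-type set, and its $\eta$-neighborhood has volume $O(\text{poly}(d) \cdot \eta)$.

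Putting this together: the $\eta$-neighborhood of $G_0(H\cap B(1))$ has $\R^{2n}$-volume at most $O(\text{poly}(\log(1/\eta)) \cdot \eta)$, whereas the ball $\{|w'| \le \eta|\log\eta|^\p\}$ has volume $\sim (\eta|\log\eta|^\p)^{2n} = \eta^{2n}|\log\eta|^{2n\p}$. For $\p$ chosen large enough (depending only on $n$, to beat the polynomial-in-$\log$ factor coming from $d = O(\log(1/\eta))$ and from Wongkew's degree bound) and $\eta$ small — but note the statement only needs $\eta < 1/3$, and for $\eta$ bounded below the claim is trivial by taking $w$ generic of size $\sim \eta$, adjusting constants — we get that the ball has strictly larger volume than the bad set, so a valid $w'$ exists, hence a valid $w$. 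The main obstacle I expect is bookkeeping the degree bounds carefully enough: one must track how the degree of the algebraic set controlling degeneracy grows with $d$ (it is polynomial, with exponent depending on $n$), feed this into Wongkew's estimate, and verify that a single exponent $\p = \p(n)$ suffices uniformly; handling the transition from the linear model hypersurface $\{\Re z_1 = 0\}$ back to the curved $\Psi_k^{-1}(\partial V)$ is already done in the calling argument (Part~IV), so here it is clean.
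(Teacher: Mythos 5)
Your overall strategy matches the paper's: reduce to avoiding the $\eta$-neighborhood of the image, over the real hypersurface $\{\Re z_1 = 0\}$, of a holomorphic map into $\C^n$; approximate that map by its Taylor polynomial of degree $O(|\log\eta|)$; trap the image of the polynomial map inside a real algebraic hypersurface of polynomially-bounded degree (Lemma~\ref{imageinhypersurface}); and finish with Wongkew's volume estimate (Lemma~\ref{wongkewtubularneighborhood}). However, there is a genuine error in your reduction.

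You write that ``adding the linear perturbation shifts $G_0$ by the constant vector $w'$,'' and conclude that the condition $|G_w(z)|>\eta$ for all $z$ is equivalent to $-w'\notin G_0(H\cap B(1))+\eta B$. This is false: only the derivative components $\partial_{z_r}f_w = \partial_{z_r}f + w_r$ are constant shifts, while the first component $f_w = f + w_0 + w_2 z_2 + \cdots + w_n z_n$ is shifted by the \emph{$z$-dependent} quantity $w_0 + \sum_{r\geq 2}w_r z_r$. In other words $G_w(z) = G_0(z) + A_z(w')$ where $A_z$ is the unipotent matrix with first row $(1, z_2, \ldots, z_n)$, not the identity. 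Since $A_z$ differs from the identity by $O(|z|)$, and you are choosing $|w'|$ as large as $\eta|\log\eta|^{\mathbf p}$, the discrepancy $|A_z(w')-w'|$ can be much larger than $\eta$, so avoiding the $\eta$-neighborhood of $-G_0(H\cap B(1))$ does not imply $|G_w(z)|>\eta$.

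The correct object to avoid, as in the paper, is the image of the map $F(z) = -A_z^{-1}G_0(z) = \bigl(-f + \sum_{r\geq 2} z_r \partial_{z_r}f,\; -\partial_{z_2}f,\; \ldots,\; -\partial_{z_n}f\bigr)$, the unique $w$ making $|f_w(z)|+|df_w(z)\rst\xi|$ vanish; since $A_z$ and $A_z^{-1}$ are uniformly bounded on $B(1+\eps)$, one does have $|G_w(z)|\asymp |w-F(z)|$, and from there your polynomial-approximation / pigeonhole / Wongkew chain goes through verbatim (applied to $F$ rather than $G_0$). So the gap is a single misidentification, but a material one, and the fix is exactly the explicit map $F$ the paper writes down.
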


\begin{remark}
A stronger version of Proposition \ref{qtrans} (a true quantitative Sard
theorem where we only perturb $f$ by a constant, i.e.\ $w_2 = \cdots = w_n=0$ 
above) is due to Donaldson \cite{donaldsonI, donaldsonII} and Mohsen
\cite{mohsenpreprint} with a rather more difficult proof.  Mohsen's result could
be used in \S\ref{globalconstruction} in place of Proposition \ref{qtrans},
resulting in a simpler definition $s_i := w_0 \tilde\sigma$, eliminating the
need for the remaining $\tilde\sigma_2, \ldots, \tilde\sigma_n$.  We have chosen
instead to present the argument following Auroux's observation that the weaker
Proposition \ref{qtrans}, whose proof is more elementary, is sufficient for the
argument in \S\ref{globalconstruction}.
\end{remark}

\begin{proof}
For a given $z \in B(1)$ with $\Re z_1 = 0$, the quantity $|f_w(z)| + |df_w(z)
\rst \xi|$ vanishes for exactly one value of $w$. The function $F: \{z \in B(1):
\Re z_1 = 0\} \to \C^n$ which associates to a given $z$ this unique $w$ is the 
restriction of a holomorphic function $F: B(1+\eps) \to \C^n$. Explicitly
\begin{equation}
   F(z) = \biggl(-f + z_2 \der_{z_2} f + \cdots + z_n \der_{z_n} f,
   -\der_{z_2} f, \ldots, -\der_{z_n} f\biggr) .
\end{equation}
In fact, the quantity $|f_w(z)| + |df_w(z) \rst \xi|$ is bounded below by (a
constant depending only on $n$, times) the distance from $w$ to $F(z)$. Hence it
suffices to show that $B(\delta) \setminus N_\eta (F (\{z \in B(1): \Re z_1 =
0\}))$ is nonempty for $\delta = \eta \left|\log\eta\right|^{O(1)}$.

We may approximate $F$ to within error $\le \eta$ on $B(1)$ by a polynomial
$\tilde F$ of degree $O(\left|\log\eta\right|)$.  Indeed, the error in the
degree $m$ Taylor approximation of $F$ is exponentially small in $m$, uniformly
over $B(1)$, since $F$ is holomorphic and bounded effectively on
$B(1+\frac\epsilon 2)$ by \eqref{ellipticestimate} with $\ell=1$.  To see this,
observe that (by the $U(n)$ symmetry) it is enough to prove an effective
exponential upper bound on the error over $B(1)\cap(\C\times\{0\}^{n-1})$, and
this is just the well-known single-variable case (proved using the Cauchy
integral formula).

It thus suffices to show that $B(\delta) \setminus N_{2\eta} 
(\tilde F (\{z \in B(1): \Re z_1 = 0\}))$ is nonempty for $\delta = \eta 
\left|\log\eta\right|^{O(1)}$.  Since $\tilde F$ is a polynomial of degree $O
(\left|\log\eta\right|)$, a pidgeonhole principle argument (Lemma 
\ref{imageinhypersurface} below) implies that its image is contained in a real 
algebraic hypersurface $X \subseteq \C^n$ of degree $\le 
\left|\log\eta\right|^{O(1)}$. Hence it suffices to show that $B(\delta) 
\setminus N_{2\eta}(X)$ is nonempty for $\delta = \eta 
\left|\log\eta\right|^{O(1)}$ and any real hypersurface 
$X \subseteq \C^n$ of degree $\le \left|\log\eta\right|^{O(1)}$.

Wongkew's estimate \cite{wongkew} (Lemma \ref{wongkewtubularneighborhood} below)
on the volume of a tubular neighborhood of a real algebraic variety gives
\begin{equation}
   \vol_{2n} (N_{2\eta} (X) \cap B(\delta))
 = \delta^{2n} \cdot O \left(\frac \eta \delta \left|\log\eta\right|^{O(1)}\right) .
\end{equation}
For $\delta = \eta\left|\log\eta\right|^{O(1)}$, this is less than the total 
volume of $B(\delta)$, which is enough.
\end{proof}

\begin{lemma}[{Auroux \cite[p565]{aurouxII}}] \label{imageinhypersurface}
Let $F: \R^n \to \R^m$ be a real polynomial map of degree $\le d$ where $n < m$.
Then the image of $F$ is contained in a real algebraic hypersurface of degree
$D \le \bigl\lceil (\frac{ m! }{ n! } d^n)^{ 1 / (m-n) } \bigr\rceil$.
\end{lemma}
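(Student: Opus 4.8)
The plan is to produce, by pure linear algebra, a nonzero polynomial $P$ on $\R^m$ of degree $\le D$ vanishing identically on the image of $F$; then $\im F \subseteq \{P = 0\}$, a real algebraic hypersurface of degree $\le D$. (Here $P$ cannot be a nonzero constant, since $P \circ F$ would then be a nonzero constant, contradicting $P \circ F \equiv 0$; so $\{P = 0\}$ is a genuine hypersurface. We may assume $d \ge 1$, the case $d = 0$ being trivial.)

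First I would set up the relevant linear map. Since $F$ has degree $\le d$, precomposition $P \mapsto P \circ F$ carries polynomials on $\R^m$ of degree $\le D$ to polynomials on $\R^n$ of degree $\le dD$. The source space has dimension $\binom{D+m}{m}$ and the target space has dimension $\binom{dD+n}{n}$, so it suffices to choose $D$ so that
\[
   \binom{D+m}{m} > \binom{dD+n}{n},
\]
for then the precomposition map has nontrivial kernel and any nonzero $P$ in it does the job. (Note $P \circ F$ may have degree strictly below $dD$, but this only shrinks the target and so only helps.)

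The heart of the matter is verifying that this inequality holds once $D \ge \bigl(\tfrac{m!}{n!} d^n\bigr)^{1/(m-n)}$ (which is why $n < m$ is needed). Writing $\binom{D+m}{m} = \tfrac1{m!}\prod_{i=1}^m (D+i)$ and $\binom{dD+n}{n} = \tfrac1{n!}\prod_{i=1}^n (dD+i)$ and separating the first $n$ factors of the numerator, one gets
\[
   \frac{\binom{D+m}{m}}{\binom{dD+n}{n}}
 = \frac{n!}{m!}\left(\prod_{i=1}^n \frac{D+i}{dD+i}\right)\prod_{i=n+1}^m (D+i).
\]
Since $\frac{D+i}{dD+i} \ge \frac1d$ for all $i \ge 1$ (equivalent to $d \ge 1$), the first product is $\ge d^{-n}$; and the second product has $m - n$ factors each exceeding $D$, so it is $> D^{m-n}$. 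Hence the ratio exceeds $\frac{n!}{m! d^n} D^{m-n}$, which is $\ge 1$ precisely when $D^{m-n} \ge \frac{m!}{n!} d^n$. Taking $D = \bigl\lceil (\tfrac{m!}{n!} d^n)^{1/(m-n)} \bigr\rceil$ then completes the argument.

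The only genuinely delicate point is making the dimension count close with exactly the stated constant: the crude estimates $\binom{D+m}{m} \ge D^m/m!$ and $\binom{dD+n}{n} \le (dD+n)^n/n!$ lose a spurious factor like $2^n$, and the factor-by-factor comparison above — in particular the elementary inequality $\frac{D+i}{dD+i} \ge \frac1d$ — is what is needed to avoid it. Everything else (well-definedness of precomposition, that a nonzero kernel element is non-constant, the degenerate cases) is routine.
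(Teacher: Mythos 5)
Your proof is correct and follows the paper's argument exactly: the same dimension count via the precomposition map, the same inequality $\binom{D+m}{m} > \binom{dD+n}{n}$, and the same factor-by-factor comparison that the paper compresses into the unexplained claim that the ratio is ``bounded below by $D^m/(dD)^n$.'' You simply spell out the pairing $\frac{D+i}{dD+i} \ge \frac1d$ and the routine non-constancy check that the paper leaves implicit.
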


\begin{proof}
The space of real polynomials $G$ of degree $\le D$ on $\R^m$ has dimension
$\binom{ m+D } m$. The composition $G \circ F$ has degree $\le dD$. Hence there
exists a nonzero $G$ for which the composition is zero provided $\binom{ m+D } m
> \binom{ n+dD } n$, or equivalently $\frac{ (D+1) \cdots (D+m) }{ (dD+1) \cdots
(dD+n) } > \frac{ m! }{ n! }$. The left hand side is bounded below by $\frac
{ D^m }{ (dD)^n }$, and so there exists a suitable $G$ as long as $D^{m-n} \ge
\frac{ m! }{ n! } d^n$.
\end{proof}

\begin{lemma}[Wongkew \cite{wongkew}] \label{wongkewtubularneighborhood}
Let $X \subseteq \R^n$ be a real algebraic variety of codimension $m$ defined by 
polynomials of degree $\le d$.  Then we have the following estimate
\begin{equation} \label{tubularneighborhoodestimate}
   \vol_n (N_\eps(X) \cap [0,1]^n) = O((\eps d)^m)
\end{equation}
where the implied constant depends only on $n$.
\end{lemma}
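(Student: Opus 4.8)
The plan is to reduce the bound on the volume of the tubular neighborhood $N_\eps(X)$ to a counting statement about how many $\eps$-cubes it can meet, and then to control that count using the fact that a degree-$d$ variety cannot intersect a generic affine line too many times. More precisely, I would partition $[0,1]^n$ into a grid of roughly $\eps^{-n}$ axis-parallel cubes of side $\eps$, and observe that $\vol_n(N_\eps(X)\cap[0,1]^n)$ is $O(\eps^n)$ times the number of grid cubes $Q$ for which $N_\eps(X)\cap Q\ne\emptyset$, equivalently (after enlarging $\eps$ by a bounded factor) the number of grid cubes meeting $X$ itself. So the whole problem becomes: bound the number of $\eps$-cubes in the grid that $X$ passes through by $O(\eps^{-(n-m)}d^m)$.

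For that combinatorial bound I would use an integral-geometric / slicing argument. Fix one of the $n$ coordinate directions and slice $[0,1]^n$ by the family of affine lines parallel to that axis passing through grid points of the complementary $(n-1)$-dimensional $\eps$-grid; there are $O(\eps^{-(n-1)})$ such lines. Along each such line $\ell$, the intersection $\ell\cap X$ is the zero set of the restrictions of the defining polynomials, so (unless $\ell\subseteq X$) it has at most $O(d)$ points, hence meets at most $O(d)$ of the $\eps$-segments into which the grid subdivides $\ell$; and a grid cube $Q$ met by $X$ is met by one of these lines through a grid-face of $Q$ unless $X\cap Q$ is contained in a lower-dimensional coordinate slab, a degenerate case one handles by induction on $n$. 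This gives roughly $\eps^{-(n-1)}\cdot d$ cubes, i.e.\ the bound with exponent $m=1$. To get the general codimension-$m$ bound one iterates: slicing in $m$ independent coordinate directions and using that $X$ has codimension $m$, a generic $(n-m)$-dimensional coordinate slice through a grid point meets $X$ in a $0$-dimensional set of size $O(d^m)$ by a Bézout-type bound, so the number of $\eps$-cubes met is $O(\eps^{-(n-m)})\cdot O(d^m)$, and multiplying by $\eps^n$ yields \eqref{tubularneighborhoodestimate}.

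The main obstacle is making the slicing argument honest in the presence of degeneracies: a real algebraic variety of codimension $m$ can fail to behave like a smooth codimension-$m$ object along a lower-dimensional bad locus (components of the wrong dimension, tangencies to all the grid directions, pieces lying inside coordinate hyperplanes), and the naive line-counting overcounts there. The clean way around this, which is essentially Wongkew's, is to avoid smoothness entirely: bound the count of $\eps$-cubes met by $X$ purely via the Bézout-type estimate on the number of points in which $X$ meets a generic affine $(n-m)$-plane from a bounded-size family of coordinate $(n-m)$-planes, together with a pigeonhole argument that every cube met by $X$ is ``seen'' by at least one plane in the family. Since only the final volume estimate \eqref{tubularneighborhoodestimate} is needed downstream (in the proof of Proposition \ref{qtrans}), and the implied constant is allowed to depend on $n$, one does not need the sharp dependence; it therefore suffices to cite Wongkew \cite{wongkew} for the precise combinatorial count and simply assemble the pieces above, which is what I would do here.
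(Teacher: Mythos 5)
Your high-level strategy (grid the cube, reduce the volume bound to counting $\eps$-cubes met by $X$, and control the count by slicing with coordinate lines/planes and a Bézout-type bound) is in the same spirit as the paper's argument, but the crucial step is wrong and the fix you gesture at does not work. The assertion that ``every cube met by $X$ is `seen' by at least one plane in the family'' is false: $X$ can have a compact connected component (e.g.\ a tiny sphere, or for a hypersurface a point where a definite quadratic touches zero) lying strictly in the interior of a single $\eps$-cube, meeting none of the grid hyperplanes. Such a component is not ``contained in a lower-dimensional coordinate slab,'' so your proposed induction on $n$ does not cover it either. This is exactly the degeneracy that makes the lemma nontrivial.

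The missing ingredient is a bound on the number of such ``exceptional'' cubes, and the paper's proof supplies it via \emph{Milnor's theorem}: the number of connected components of a real algebraic set in $\R^n$ defined by polynomials of degree $\le d$ is $O(d^n)$. The paper's induction is therefore structured differently from yours: it slices with $O(\eps^{-1})$ coordinate hyperplanes $H$ chosen generically (a Bertini-type shift ensures $X\cap H$ still has codimension $m$ in $H$), shows that $N_\eps(X)\cap[0,1]^n$ is covered by slightly fattened tubes around $X\cap H$ inside each hyperplane \emph{plus} the $\eps$-neighborhoods of the exceptional cubes, applies the inductive hypothesis (in $\R^{n-1}$, same codimension $m$) to the first part, and applies Milnor's bound to the second. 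This yields $O((\eps d)^m+(\eps d)^n)$, which is $O((\eps d)^m)$ after handling $\eps d>1$ trivially. In contrast, your sketch tries to get $O(d^m)$ intersection points per coordinate $(n-m)$-plane directly from Bézout, but for a real variety that is not a complete intersection the natural bound in $\R^{n-m}$ is $O(d^{n-m})$, not $O(d^m)$; the paper's induction on ambient dimension is precisely what avoids having to make this claim. Finally, you ultimately punt to citing Wongkew for the hard combinatorial count, whereas the paper reproduces the argument in full; if you want a self-contained proof, the exceptional-cube bookkeeping and Milnor's bound are the pieces you need to add.
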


It can be seen via simple examples that this bound is sharp, up to the implied
constant. For completeness, we reproduce Wongkew's argument below.

\begin{proof}
We proceed by induction on $n$, the case $n=0$ being clear.  All implied
constants depend only on $n$. We assume for convenience that $\eps \leq 1$
(otherwise the desired estimate is clear).

Let $\mathbf H$ be the collection of hyperplanes $H \subseteq \R^n$ given by 
constraining any one of the coordinates to lie in $[-2\eps, 1+2\eps] \cap 
(\eps \Z + \delta)$, where $\delta$ is chosen so that $X$ intersects each $H \in
\mathbf H$ properly (i.e.\ $X \cap H$ has codimension $m$ inside $H$). Such a
$\delta$ exists by Bertini's theorem.  Clearly $\# \mathbf H = O(\eps^{-1})$.  
This set of hyperplanes partitions $\R^n$ into some unbounded components and
some cubes of side length $\eps$. We denote the set of such cubes by $\mathbf
C$.

We call a cube $C\in\mathbf C$ \emph{exceptional} iff $X$ intersects the
interior of $C$ but not its boundary. The number of exceptional cubes is clearly
bounded by $\dim H_0(X)$, which by a result of Milnor \cite{milnor} is bounded
by $d(2d-1)^{n-1} = O(d^n)$.

It is straightforward to check that
\begin{equation} \label{coverbyhyperplaneneighborhoods}
   N_\eps (X) \cap [0,1]^n \subseteq \Bigl[N_{ (1+\sqrt n)\eps }
   \Bigl(X \cap \bigcup_{H \in \mathbf H} H\Bigr) \cap [0,1]^n\Bigr] \cup
   \Bigl[\bigcup_{\begin{smallmatrix} C \in \mathbf C\cr C \text{ exceptional}
   \end{smallmatrix}} N_\eps(C)\Bigr] .
\end{equation}
Indeed, suppose $p \in [0,1]^n$ and $d(p,X) \leq \eps$. There exists $x \in X$ 
with $d(p,x) \leq \eps$, and $x \in C$ for some (closed) cube $C \in \mathbf C$.
If $C$ is exceptional, then $p$ lies in the second term above. If $C$ is not
exceptional, then $X \cap \partial C$ is nonempty. It thus follows that $d (p,
\partial C \cap X) \le \eps + d (x, \partial C \cap X) \le \eps + \eps\sqrt n$,
and so $p$ lies in the first term above.

Now the inclusion \eqref{coverbyhyperplaneneighborhoods} implies the following
inequality on volumes
\begin{multline*}
   \vol_n (N_\eps (X) \cap [0,1]^n) \leq
   \sum_{H \in \mathbf H} 2 (1+\sqrt n) \eps 
   \vol_{n-1} (N_{ (1+\sqrt n)\eps } (X \cap H) \cap H \cap [0,1]^n) \\
 + \sum_{\begin{smallmatrix} C \in \mathbf C \cr C \text{ exceptional}
   \end{smallmatrix}} (3\eps)^n .
\end{multline*}
If $m = n$, then the first term vanishes (each $X \cap H$ is empty by 
assumption), and Milnor's bound on the second term gives the desired result. 
If $m \le n-1$, then we apply the induction hypothesis to the first term and 
Milnor's result to the second term. The result is:
\begin{equation}
   \vol_n (N_\eps (X) \cap [0,1]^n) = O((\eps d)^m + (\eps d)^n) .
\end{equation}
This implies the desired estimate for $\eps d \le 1$, and for $\eps d \ge 1$ the
desired estimate is trivial.
\end{proof}

\section{Lefschetz fibrations on Stein domains}\label{steinLFsec}

We now show how the function $f$ guaranteed to exist by Theorem \ref
{quantitativesteinexistence} gives rise to a Lefschetz fibration.  To be 
precise, we will show that Theorem \ref{quantitativesteinexistence} implies 
Theorem \ref{refinedsteinexistence} and that Theorem \ref{refinedsteinexistence}
implies Theorem \ref{steinexistence}.

\begin{proof}[Proof of Theorem \ref{refinedsteinexistence} from Theorem \ref
{quantitativesteinexistence}]
Fix an embedding $V \hookrightarrow \ol V$ of the Stein domain $V$ into a 
Stein manifold $\ol V$ of the same dimension, and fix an exhausting 
$J$-convex function $\phi: \ol V\to \R$ with $V = \{\phi \le 0\}$.

By Theorem \ref{quantitativesteinexistence}, there exists (for sufficiently
large $k$) a holomorphic function $f: \ol V \to \C$ such that: 
\begin{itemize}
\item
$|f(p)| + k^{-1/2} |df(p) \rst \xi| > \eta$ for $p \in \partial V$.
\item 
$|f(p)| \le e^{\frac 12 k\phi(p)}$ for $p \in \{\phi \le 1\}$.
\end{itemize}
We claim that the bound $|f(p)| \le e^{\frac 12 k\phi(p)}$ implies:
\begin{itemize}
\item
$|df(p) - k \cdot f(p) \cdot d'\phi(p)| = O(k^{1/2} e^{\frac 12 k\phi(p)})$ for 
$p \in V$.
\end{itemize}
To see this, argue as follows. Fix a point $p \in V$ and choose a holomorphic
function $u$ defined in a neighborhood of $p$ such that $\Re u(q) = \phi(q) +
O (d(p,q)^2)$. It follows that $f(q) \cdot e^{-\frac 12 ku(q)} = O(1)$ for 
$d(p,q) = O(k^{-1/2})$, and hence it follows that $d (f \cdot e^{-\frac 12ku})
(p) = O(k^{1/2})$. Expanding the left hand side and using the fact that $du(p)
= 2d'\Re u(p) = 2d'\phi(p)$, the claim follows.

Now we take $\pi := \eta^{-1} \cdot f$, which satisfies the desired properties.
\end{proof}

\begin{proof}[Proof of Theorem \ref{steinexistence} from Theorem \ref
{refinedsteinexistence}]
By Theorem \ref{refinedsteinexistence}, there exists (for sufficiently large 
$k$), a holomorphic function $\pi: V \to \C$ such that:
\begin{itemize}
\item
For $|\pi(p)| \ge 1$, we have $d\log\pi(p) = k \cdot d'\phi(p) + O(k^{1/2})$.
\item
For $|\pi(p)| \le 1$ and $p \in \partial V$, we have $d\pi(p) \rst \xi \ne 0$.
\end{itemize}
Note that these conditions together imply that the critical locus of $\pi$ is 
contained in the interior of $\pi^{-1}(D^2)$.  Both conditions are preserved 
under small perturbations of $\pi$, hence we may perturb $\pi$ so that:
\begin{itemize}
\item
All critical points of $\pi$ on $V$ are nondegenerate and have distinct critical
values.
\end{itemize}
Indeed, the existence of such a perturbation follows from the standard fact that
global holomorphic functions on any Stein manifold $\ol V$ generate 
$\OO_{\ol V}$ and $\Omega^1_{\ol V}$ at every point (this follows from Cartan's 
Theorems A and B, or by properly embedding $\ol V$ in $\C^N$).

Now $\pi: \pi^{-1}(D^2) \to D^2$ is a Stein Lefschetz fibration, so it suffices 
to construct a deformation of Stein domains from $V$ to $\pi^{-1}(D^2)^\sm$.  
Let $g: \R_{<0} \to \R$ satisfy $g'>0$, $g''>0$, and $\lim_{x \to 0^-} g(x) = 
\infty$.  Consider the family $\{ \pi^{-1}(D^2_r) \}_{1 \le r < \infty}$, and 
consider its smoothing $\{r^{-3} g(|\pi|^2-r^2) + g(\phi) \leq M\}_{1 \le r < 
\infty}$ for some large $M < \infty$.  Since $\pi^{-1}(D^2_r)$ is cut out by 
the inequalities $\phi \le 0$ and $\Re\log\pi \le \log r$, this smoothing gives
the desired deformation as long as for every point $p\in V$ with $|\pi(p)| 
\ge 1$, the differentials $d\phi(p)$ and $\Re d\log\pi(p)$ are either linearly 
independent or positively proportional.  Since $d\log\pi(p) = k \cdot d'\phi(p) 
+ O(k^{1/2})$, this condition is clearly satisfied for sufficiently large $k$.
\end{proof}

\section{Lefschetz fibrations on Weinstein domains} \label{weinsteinLFsec}

We now show how the existence of Lefschetz fibrations on Stein domains (Theorem
\ref{steinexistence}) may be used to deduce the same for Weinstein domains
(Theorem \ref{weinsteinexistence}). For this implication, we use the 
result of Cieliebak--Eliashberg \cite[Theorem 1.1(a)]{cieliebakeliashberg} that 
every Weinstein domain may be deformed to carry a compatible Stein structure.  
The main step (Proposition \ref{steinLFtoweinsteinLF}) is thus to show that for 
any Stein Lefschetz fibration $\pi: V \to D^2$, there exists an abstract 
Weinstein Lefschetz fibration whose total space is deformation equivalent to 
$V^\sm$.

\subsection{From Stein structures to Weinstein structures}

We give a very brief review of the relationship between Stein and Weinstein
structures (for a complete treatment, the reader may consult \cite[\S 1]
{cieliebakeliashberg}). Let $(V, \phi)$ be a pair consisting of a Stein domain
$V$ and a smooth $J$-convex function $\phi: V \to \R$ with $\partial V = \{\phi 
= 0\}$ as a regular level set. If $\phi$ is Morse (which can be achieved by
small perturbation), then it induces the structure of a Weinstein domain on $V$,
namely taking the $1$-form $\lambda_\phi := -J^*d\phi$ and the function $\phi$
itself. This Weinstein domain is denoted $\W (V,\phi)$. For any deformation of
Stein domains $(V_t, \phi_t)_{t \in [0,1]}$ where every $\phi_t$ is generalized
Morse (any $\{\phi_t\}_{t\in[0,1]}$ may be perturbed to satisfy this condition),
the associated family $\W(V_t,\phi_t)_{t\in[0,1]}$ is a deformation of Weinstein
domains. In particular, the deformation class of $\W (V,\phi)$ is independent of
$\phi$, so we may denote it by $\W(V)$. Now a decisive result is the following
(we state a simplified version which is sufficient for our purpose).

\begin{theorem}[{Cieliebak--Eliashberg \cite[Theorem 1.1(a)]
{cieliebakeliashberg}}] \label{steinweinstein}
Every deformation class of Weinstein domain is of the form $\W(V)$ for a Stein 
domain $V$.
\end{theorem}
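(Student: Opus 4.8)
This is precisely \cite[Theorem 1.1(a)]{cieliebakeliashberg}; we only indicate the strategy of its proof. The plan is to build a Stein structure on a deformation of a given Weinstein domain $W$ one handle at a time, following a Weinstein handle decomposition of $W$.

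First I would recall that, after a small perturbation making $\phi$ Morse, the Liouville flow of $X_\lambda$ exhibits $W$ as obtained from a ball by successively attaching Weinstein handles along isotropic spheres in the contact-type regular level sets of $\phi$; since $X_\lambda$ is gradient-like and expands $\omega$, all these handles have index $\le n := \tfrac12\dim W$. The goal is to carry out a parallel induction producing a Stein domain $V$ with a $J$-convex Morse function whose associated Weinstein domain $\W(V)$ agrees, up to Weinstein homotopy, with the piece of $W$ built so far.

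The inductive step attaches a handle of index $k$ along a $(k-1)$-sphere $\Lambda$ in the contact boundary of the piece already constructed. For a subcritical handle ($k<n$) the sphere $\Lambda$ is isotropic of dimension $<n-1$, and the relevant $h$-principle for such isotropic embeddings is flexible: one $C^0$-perturbs $\Lambda$, within its isotopy class and carrying along the formal data supplied by the Weinstein structure, to an isotropic sphere along which a \emph{Stein} handle may be glued (Eliashberg). For a critical handle ($k=n$) the sphere $\Lambda$ is an $(n-1)$-sphere in a $(2n-1)$-dimensional contact manifold; here I would invoke Gromov's $h$-principle for Legendrian immersions together with a general-position argument to deform $\Lambda$, within the correct formal and isotopy class, to a genuine Legendrian embedding, and then attach the corresponding critical Stein handle. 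Throughout, one checks that each elementary modification used (isotopy of an attaching sphere, handle slides, births and deaths of cancelling pairs) is realized by a Weinstein homotopy, so that concatenating the steps yields a deformation of Weinstein domains from $W$ to $\W(V)$; this is the reason the notion of deformation in the theorem must allow $\phi$ to pass through birth--death (generalized Morse) critical points.

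The main obstacle is the critical-handle step in real dimension four ($n=2$): the $h$-principle for Legendrian knots in contact $3$-manifolds fails, so one cannot simply $C^0$-approximate $\Lambda$ by a Legendrian, and instead must arrange that the Thurston--Bennequin framing of the attaching knot is sufficiently negative and appeal to Eliashberg's classification of Stein surfaces. A further difficulty, present in all dimensions, is that the theorem asserts a genuine \emph{deformation}, not mere existence, so the entire construction must be run in one-parameter families, with attendant bookkeeping of the formal data and of the births and deaths of critical points.
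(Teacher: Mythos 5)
The paper does not give a proof of this statement: it is quoted verbatim as \cite[Theorem 1.1(a)]{cieliebakeliashberg} and used as a black box in the derivation of Theorem~\ref{weinsteinexistence} from Theorem~\ref{steinexistence}. Your proposal treats it the same way --- citing the theorem and supplying a sketch of Cieliebak--Eliashberg's own argument --- so there is nothing in the paper to compare it against; what you have written is supplementary.

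Your sketch captures the broad strategy of Cieliebak--Eliashberg (induction over a Weinstein handle decomposition, $h$-principle input at each handle attachment, the genuine difficulties in real dimension four, and the need for one-parameter versions of everything to obtain a homotopy rather than just an isomorphism). One point worth sharpening: in the Weinstein-to-Stein direction the attaching spheres supplied by the Weinstein structure are \emph{already} isotropic (resp.\ Legendrian for critical index) with respect to the Weinstein contact structure on the level set, so the $h$-principle is not needed to make them isotropic. It is invoked rather to arrange compatibility with the complex/CR structure being built inductively (e.g.\ totally real embeddings of the subcritical attaching data and the matching of the Stein-induced contact structure with the Weinstein one). Relatedly, in dimension four the attaching circle of a Weinstein $2$-handle already carries the $tb-1$ framing, so the Thurston--Bennequin obstruction you flag is really an issue for realizing a \emph{smooth} handle decomposition by a Stein structure (CE's separate ``existence theorem for Stein structures''); the delicacy in the Weinstein-to-Stein case in dimension four is somewhat different in character, having more to do with the failure of parametric flexibility for Legendrian knots. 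These are refinements of a cited theorem's proof, not gaps in your argument, and they do not affect how the statement is used in this paper.
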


\subsection{From Stein Lefschetz fibrations to abstract Weinstein Lefschetz
fibrations}

Theorem \ref{weinsteinexistence} follows from Theorem \ref{steinexistence}, 
Theorem \ref{steinweinstein}, and the following proposition.

\begin{proposition} \label{steinLFtoweinsteinLF}
Let $\pi: V \to D^2$ be a Stein Lefschetz fibration. There exists an abstract
Weinstein Lefschetz fibration $W = (W_0;  L_1, \ldots, L_m)$ whose total space 
$|W|$ is deformation equivalent to $\W(V^\sm)$.
\end{proposition}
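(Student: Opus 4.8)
The plan is to read off an abstract Weinstein Lefschetz fibration directly from the geometry of $\pi$, and then to identify its total space with $\W(V^\sm)$ by exhibiting a Weinstein handle decomposition of $V^\sm$ adapted to $\pi$. Concretely, fix a regular value $*$ of $\pi$ together with a distinguished basis of disjoint embedded arcs $\gamma_1,\dots,\gamma_m$ from $*$ to the critical values $v_1,\dots,v_m$ (so $m=\#\crit(\pi)$), indexed in the cyclic order in which they leave $*$. Take $W_0:=\W\bigl(\pi^{-1}(*),\phi\rst{\pi^{-1}(*)}\bigr)$; this is a Weinstein domain of dimension $2n-2$ because the restriction of a $J$-convex function to a complex submanifold is again $J$-convex, so $\pi^{-1}(*)$ is a Stein domain of complex dimension $n-1$. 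Let $L_j\subseteq W_0$ be the Lagrangian vanishing sphere of the Lefschetz critical point $p_j$, obtained by symplectic parallel transport along $\gamma_j$ for the connection determined by $\omega_\phi$ (and, near the horizontal boundary, by the smoothed $J$-convex function $\Phi_g$ of Definition \ref{steinLF}). I would check that $(W_0;L_1,\dots,L_m)$ is an abstract Weinstein Lefschetz fibration: each $L_j$ is an embedded Lagrangian sphere carrying a parameterization canonical up to $O(n)$, both facts coming from the complex Morse normal form $\pi=v_j+\sum_{i=1}^n z_i^2$ whose quadratic form is preserved precisely by $O(n,\C)$; and each $L_j$ is exact, which is automatic unless $n=2$, and for $n=2$ holds because the Lagrangian thimble over $\gamma_j$ exhibits $L_j$ as the boundary of an embedded Lagrangian disc, forcing $\int_{L_j}\lambda=0$.

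\textit{Normalization.} Next I would deform the Stein Lefschetz fibration --- through a family whose smoothings $V^\sm$ are all mutually deformation equivalent, so that the target of the statement is unaffected --- into a convenient form. Using the complex Morse lemma together with the Cieliebak--Eliashberg theory of $J$-convex functions \cite{cieliebakeliashberg}, arrange that near each $p_j$ there are holomorphic coordinates in which $\pi=v_j+\sum_{i=1}^n z_i^2$ and $\phi=\sum_{i=1}^n|z_i|^2-c_j$, and choose the auxiliary function $g$ in the definition of $\Phi_g$ so that $\Phi_g$ has $p_j$ as a critical point of (necessarily maximal) index $n$. Then, shrinking the base to $D^2_\rho$ with $\rho<1$ --- which is a deformation of Stein domains, exactly as in the proof of Theorem \ref{steinexistence} from Theorem \ref{refinedsteinexistence}, since $d\pi\ne 0$ over the critical-value-free annulus --- arrange in addition that over $D^2_\rho$ minus small disks about the $v_j$ the fibration is trivialized compatibly with the Weinstein structure, i.e.\ identified with a product of $W_0$ with the base. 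This last point uses that the symplectic connection has trivial holonomy over a simply connected base, together with a Weinstein homotopy matching the Liouville forms; $\phi$ itself need not be product-like to begin with, so some such homotopy (or a further deformation of $\phi$) is needed here.

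\textit{Handle decomposition and conclusion.} Take the critical moduli $|v_1|<\dots<|v_m|$ distinct and consider the family of Stein domains $V_\rho:=\pi^{-1}(D^2_\rho)^\sm$ for $0<\rho\le 1$; by the previous paragraph $V_1$ is deformation equivalent to $V^\sm$, and for $\rho<|v_1|$ the domain $V_\rho$ is Weinstein deformation equivalent to the stabilization $W_0\times D^2$. As $\rho$ increases past $|v_j|$, the elementary cobordism $V_{|v_j|+\eps}\setminus V_{|v_j|-\eps}$ carries exactly one critical point of $\Phi_g$, namely $p_j$, of index $n$. The key step is a structural lemma: such an elementary Weinstein cobordism is the trace of attaching a single critical Weinstein handle along the Legendrian in $\partial V_{|v_j|-\eps}$ cut out by the stable manifold of $p_j$. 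In the local model this stable manifold is the real locus $\{y=0\}$, meeting the incoming level set in an $S^{n-1}$ which, viewed inside the fiber, is exactly the vanishing cycle $L_j$, and which sits over the point where $\gamma_j$ enters $\partial D^2_{|v_j|-\eps}$; hence the attaching Legendrian is the Legendrian lift of $L_j$. Assembling over $j=1,\dots,m$ in cyclic order, $V^\sm$ is deformation equivalent to $W_0\times D^2$ with critical Weinstein handles attached along the Legendrian lifts of $L_1,\dots,L_m$ placed cyclically around $W_0\times S^1$, which is precisely $|W'|$.

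\textit{Main obstacle.} I expect the hardest point to be the structural lemma identifying an elementary Weinstein cobordism with a single index-$n$ critical point as the attachment of one critical Weinstein handle, together with the identification of the attaching Legendrian with the Legendrian lift of the vanishing cycle (including getting the framing right); this is where the Liouville and contact geometry near the critical level must be brought into the standard handle model, in the spirit of the Cieliebak--Eliashberg handle theory \cite{cieliebakeliashberg}. The normalizations of $\phi$ near the $p_j$ and the verification that shrinking the base and smoothing the corners really do produce deformations of Stein (hence Weinstein) domains are the other places requiring genuine care, though these lean on the deformation technology already in play.
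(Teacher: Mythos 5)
Your overall plan matches the paper's: extract $W_0$ as a fiber, define $L_j$ via parallel transport of the Lefschetz critical points, and then exhibit $V^\sm$ as $W_0\times D^2$ with critical handles attached along Legendrian lifts of the $L_j$, with the identification of an elementary Weinstein cobordism with a Weinstein handle attachment (Lemma \ref{elementarycobordismisweinsteinhandle} in the paper) as the key structural ingredient. However, the concrete implementation you propose contains some genuine errors, precisely at the places where the paper expends its real effort.

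First, the claimed trivialization ``over $D^2_\rho$ minus small disks about the $v_j$ \ldots identified with a product of $W_0$ with the base'' cannot hold: that region is not simply connected once it excludes critical values, and the holonomy of the symplectic connection around each $v_j$ is a (nontrivial) Dehn twist. The paper never asserts a global trivialization of the regular locus; it only compares with a product \emph{after} taking the rescaled limit $\delta\to 0$, where the annulus $D^2\setminus D^2_\delta$ has shrunk away, and even then only at the level of the total spaces via the degeneration \eqref{twototalspaces}, not of the fibration. Second, the normalization $\phi=\sum|z_i|^2-c_j$ near $p_j$ is not achievable. Once you have fixed holomorphic coordinates putting $\pi$ in the form $v_j+\sum z_i^2$, the residual freedom is $O(n,\C)$, which does not act transitively on positive definite Hermitian forms, and in any case $p_j$ is generically not a critical point of $\phi$ (it is a critical point of the fiberwise restriction of $\phi$ only because the fiber is singular there). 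Forcing $\phi$ into this shape would require a substantial and unjustified deformation. The paper sidesteps this entirely: it never normalizes $\phi$, but instead works with the auxiliary function $\eps g(\phi)+h(|\pi|/\delta)$ and studies its critical points directly, using only that they are confined to the fiberwise critical locus of $\phi$ and that $h$ is harmonic over the annulus. Third, your definition of the vanishing cycles uses parallel transport for $\omega_\phi$; this connection is in general not complete (parallel transport can exit through $\partial_h V$). Your parenthetical mention of $\Phi_g$ shows you sense the issue, but the needed statement is that for $g$ satisfying the growth condition \eqref{completnesscondition} the connection for $\omega_{g(\phi)}$ is complete; this is precisely Lemma \ref{makecomplete}, and it is what makes the Lefschetz thimbles and vanishing cycles well-defined globally. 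Finally, ordering the handle attachments by the moduli $|v_j|$ and the Legendrians by the cyclic order of the arcs $\gamma_j$ are two different orderings, and it is the angular one that must match the positions $2\pi j/m\in S^1$ in Definition \ref{totalspacedef}; the paper's normalization is therefore that the critical values have distinct \emph{arguments}, not distinct moduli, and the radial parallel transport/radial sublevel sets are then compatible with that.

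To summarize: you have the right architecture, and you correctly flag the elementary-cobordism-equals-handle-attachment lemma as the hard technical point, but the two ``normalization'' claims are false as stated, and the completeness of the connection is a needed input you treat as automatic. The paper's route --- the interpolation function $\eps g(\phi)+h(|\pi|/\delta)$, the completeness lemma for $\omega_{g(\phi)}$, and the double limit $(\eps,\delta)\to(0,0)$ --- is designed exactly to avoid those two false normalizations while still producing the handle decomposition you are after.
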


The abstract Weinstein Lefschetz fibration associated to a Stein Lefschetz 
fibration may be described as follows. The ``central fiber'' $W_0$ is the 
Weinstein domain associated to a regular fiber $\pi^{-1}(p)$ of $\pi: V \to
D^2$, and the ``vanishing cycles'' $L_1, \ldots, L_m$ are the images of the
critical points of $\pi$ under symplectic parallel transport along a set of
disjoint paths from the critical values of $\pi$ to the regular value $p$. Hence
the content of the proposition is that (as a Weinstein manifold) $V^\sm$ may be 
described as a small product neighborhood of a regular fiber with Weinstein
handles attached along the vanishing cycles.

We now give a detailed definition of the total space of an abstract Weinstein 
Lefschetz fibration.

\begin{definition} \label{totalspacedef}
Let $W = ((W_0, \lambda_0, \phi_0) ; L_1, \ldots, L_m)$ be an abstract Weinstein
Lefschetz fibration. Its \emph{total space} $|W|$ is defined as follows. We
equip $W_0 \times \C$ with the Liouville form $\lambda_0 - J^*d(\frac12 |z|^2)$ 
and the Morse function $\phi_0 + |z|^2$ for which the resulting Liouville 
vector field $X_{\lambda_0}+\frac 12(x\der_x{}+y\der_y{})$ is gradient-like. 
Fix Legendrian lifts $\Lambda_j
\subseteq (W_0 \times S^1, \lambda_0 + N d\theta)$ of the exact Lagrangians
$L_j \subseteq W_0$ such that $\Lambda_j$ projects to a small interval around
$2\pi j/m \in S^1$ (here we choose $N < \infty$ sufficiently large so that these
intervals are disjoint). Now the embedding $S^1 \hookrightarrow \C$ as the
circle of radius $\sqrt N$ pulls back the Liouville form $-J^*d(\frac 12 |z|^2)$ to
the contact form $N d\theta$. Hence we may think of $\Lambda_j$ as lying inside 
$W_0 \times \C$ as a Legendrian on the level set $\{|z| = \sqrt N\}$. The downward
Liouville flow applied to $\Lambda_j$ gives rise to a map $\Lambda_j \times
\R_{\geq 0} \to W_0 \times \C$, which intersects the level set $\{\phi_0 + |z|^2
= 0\}$ in a Legendrian $\Lambda_j'$ (here we choose $N < \infty$ so that the
projection of $\{\phi_0 + |z|^2 \leq 0\}$ to $\C$ is contained inside the disk
of radius $\sqrt N$). The total space $|W|$ is defined as the result of attaching
Weinstein handles (\cite{weinstein}) to the Weinstein domain $\{\phi_0 + |z|^2
\leq 0\}$ along the Legendrians $\Lambda_j'$ (marked via the maps $S^{n-1} \to
L_j \xrightarrow\sim \Lambda_j \xrightarrow\sim \Lambda_j'$). It is easy to see 
that $|W|$ is well-defined up to canonical deformation (we will remark in detail
on the well-definedness of Weinstein handle attachment in Lemma \ref
{elementarycobordismisweinsteinhandle}).
\end{definition}

We now introduce a variant of the above construction, which will be used in the 
proof of Proposition \ref{steinLFtoweinsteinLF}.

\begin{definition} \label{steinLFplusweinsteinhandles}
Let $W = (\pi: V \to D^2, \phi, g; L_1, \ldots, L_m)$ consist of a Stein 
Lefschetz fibration $\pi: V \to D^2$, a $J$-convex function $\phi: V \to \R$ 
with $\partial_h V = \{\phi = 0\}$ as a regular level set, a function $g: 
\R_{<0} \to \R$ with $g>0$, $g'>0$, $g''>0$, and $\lim_{x \to 0^-} g(x) = 
\infty$, and a collection of exact parameterized Lagrangian (with respect to 
$\lambda_{g(\phi)}$) spheres $L_j \subseteq V_{p_j} := \pi^{-1}(p_j)$ for 
distinct points $p_1, \ldots, p_m \in S^1 = \partial D^2$, ordered 
counterclockwise. We define its \emph{total space} $|W|$ as follows. We
consider the $J$-convex function $\eps g(\phi) + \frac 12 |\pi|^2$ on $V$. The
induced contact form on $\pi^{-1} (\partial D^2)$ may be written as $\eps
\lambda_{g(\phi)} + d\theta$. Let us center the $S^1$-coordinate at $p_j \in
S^1$, rescale it by $\eps^{-1}$, and rescale the contact form by $\eps^{-1}$. In
the limit $\eps \to 0$, this rescaling of $\pi^{-1} (\partial D^2)$ converges to
the contact manifold $(V_{p_j} \times \R, \lambda_{g(\phi)} + dt)$. In $V_{p_j} 
\times \R$, there is a unique (up to translation) Legendrian $\Lambda_j$
projecting to $L_j$. During the deformation of $V_{p_j} \times \R$ back to
$\pi^{-1} (\partial D^2)$ for small $\eps>0$, there clearly exists a
simultaneous Legendrian isotopy $\Lambda_j^\eps \subseteq \pi^{-1} (\partial 
D^2)$ starting at $\Lambda_j^0 = \Lambda_j$. Now the downward Liouville applied 
to $\Lambda_j^\eps$ intersects $\{\eps g(\phi) + \frac 12 (|\pi|^2 - 1) = 0\}$
in a Legendrian $\Lambda_j^{\eps\prime}$. The total space $|W|$ is defined as
the result of attaching Weinstein handles to the Weinstein domain $\{\eps
g(\phi) + \frac 12 (|\pi|^2-1) \leq 0\}$ along these Legendrians. This total
space is independent of the choice of sufficiently small $\eps > 0$ and the 
family $\{\Lambda_i^\eps\}_{\eps \geq 0}$ up to canonical deformation.
\end{definition}

Definition \ref{steinLFplusweinsteinhandles} reduces to Definition 
\ref{totalspacedef} in the special case of a product fibration, in the sense 
that there is a canonical deformation equivalence
\begin{equation} \label{twototalspaces}
\bigl| (V_0 \times D^2 \to D^2, \phi_0, g; L_1 \times \{\alpha_1\}, \ldots, L_m 
\times \{\alpha_m\}) \bigr| = \bigl| (\W(V_0, g(\phi_0)); L_1, \ldots, L_m)
\bigr| .
\end{equation}
where $\phi_0: V_0 \to \R$ is $J$-convex with $\partial V_0 = \{\phi_0 = 0\}$ 
as a regular level set, $L_1, \ldots, L_m \subseteq V_0$ are exact parameterized
Lagrangian spheres with respect to $\lambda_{g(\phi_0)}$, and $\alpha_1, \ldots,
\alpha_m \in S^1 = \partial D^2$ are ordered counterclockwise.  The right hand 
side of \eqref{twototalspaces} is a slight abuse of notation, as we should 
really write $\W(\{g(\phi_0) \leq M\}, g(\phi_0))$ for sufficiently large $M$.

\begin{proof}[Proof of Proposition \ref{steinLFtoweinsteinLF}]
We assume that $0 \in D^2$ is a regular value of $\pi$ and that each critical
value of $\pi$ has a distinct complex argument (this may be achieved by
post-composing $\pi$ with a generic Schwarz biholomorphism $D^2 \to D^2$).

Fix a smooth $J$-convex function $\phi: V \to \R$ with $\partial_h V = \{\phi = 
0\}$ as a regular level set (as is guaranteed to exist by Definition \ref
{steinLF}). We let $V_0 := \pi^{-1}(0)$ denote the central fiber, and we assume 
that $\phi_0 := \phi \rst{ V_0 }$ is Morse (this can be achieved by a small
perturbation of $\phi$).

By Lemma \ref{makecomplete} below, there exists a smooth function $g: \R_{<0} 
\to \R$ satisfying $g>0$, $g'>0$, $g''>0$, and $\lim_{x \to 0^-} g(x) = \infty$,
such that the symplectic connection on $\pi: V \setminus \partial_h V \to D^2$ 
induced by $\omega_{g(\phi)}$ is \emph{complete}. Fix one such $g$.

We consider parallel transport along radial paths in $D^2$ with respect to the 
symplectic connection induced by $\omega_{g(\phi)}$. Under this parallel 
transport, each critical point of $\pi$ sweeps out a Lagrangian disk called a 
\emph{Lefschetz thimble} (to see this, apply the stable manifold theorem to the 
Hamiltonian vector field $X_{\Im \log\pi}$, and recall that the critical values 
of $\pi$ have distinct complex arguments). The fiber over $0 \in D^2$ of a 
Lefschetz thimble is an exact Lagrangian sphere called 
a \emph{vanishing cycle}. Let $L_1, \ldots, L_m \subseteq V_0$ denote the
vanishing cycles of all the critical points of $\pi$, ordered by angle. As
stable manifolds of the vector field $X_{\Im \log\pi}$, they come equipped with 
parameterizations $S^{n-1} \to L_j$, which are well-defined in $\Diff (S^{n-1}, 
L_j) / O(n)$ up to contractible choice.

Now $W := (\W (V_0, g(\phi_0)); L_1, \ldots, L_m)$ is an abstract Weinstein
Lefschetz fibration, and it remains to show that its total space $|W|$ is 
deformation equivalent to $\W (V^\sm)$.

We consider the $J$-convex function $\eps g(\phi) + h(\frac {|\pi|} \delta)$ on 
$V \setminus \partial_h V$ for small $\eps, \delta > 0$, where
\begin{equation}
   h(r) := \begin{cases}
   \log r & r \geq 1\cr
   \frac12 (r^2-1) & r \leq 1.
\end{cases}
\end{equation}
We claim that for $(\eps, \delta) \to (0,0)$, the sublevel set
\begin{equation} \label{sublevelset}
   \Bigl\{\eps g(\phi) + h\Bigl(\frac {|\pi|} \delta\Bigr)
   \leq \log \frac 1\delta \Bigr\} \subseteq V
\end{equation}
is deformation equivalent to $V^\sm$. Indeed, consider the 
$\leq \log \frac 1\delta$ sublevel set of the linear interpolation between 
$\eps g(\phi) + h(\frac {|\pi|} \delta)$ and $\eps g(\phi) + \eps g(|\pi|^2-1)$.
As $(\eps,\delta) \to (0,0)$, the boundary of this deformation stays arbitrarily
close to $\partial V$, and the critical locus of the linear interpolation stays 
away from $\partial V$ (note that this critical locus is always contained in the
fiberwise critical locus of $\phi$). Thus \eqref{sublevelset} is deformation
equivalent to $V^\sm$ as claimed.

As $(\eps, \delta) \to (0,0)$, the critical points of $\eps g(\phi) + h(\frac
{|\pi|} \delta)$ over $D^2 \setminus D^2_\delta$ are in bijective correspondence
with $\crit(\pi)$ (note that the critical locus is contained in the fiberwise
critical locus of $\phi$). Over $D^2 \setminus D^2_\delta$, the stable manifolds
of these critical points approach the Lefschetz thimbles as $\eps \to 0$ and
$\delta>0$ is fixed. Indeed, $h$ is harmonic over $D^2 \setminus D^2_\delta$,
and hence the Liouville vector field of $\eps g(\phi) + h(\frac {|\pi|} \delta)$
is given by $X_{g(\phi)} + \eps^{-1}X_{\Im\log\pi}$ over $D^2 \setminus 
D^2_\delta$, where $X_{g(\phi)}$ is the Liouville vector field of $g(\phi)$ and
$X_{\Im\log\pi}$ is the Hamiltonian vector field with respect to
$\omega_{g(\phi)}$ of $\Im\log\pi$.

Let us denote by $\bar\Lambda_j^{\eps,\delta} \subseteq \pi^{-1}
(\partial D^2_\delta)$ the intersections of the stable manifolds of $\eps
g(\phi) + h (\frac {|\pi|} \delta)$ with $\pi^{-1}(\partial D^2_\delta)$. Thus
$\bar\Lambda_j^{\eps,\delta}$ is Legendrian with respect to the contact form
$\eps\lambda_{g(\phi)} + d\theta$. Denote by $L_j^\delta$ the intersections of
the Lefschetz thimbles with $\pi^{-1} (\partial D^2_\delta)$. Thus as $\eps \to 
0$ and $\delta>0$ is fixed, we have $\bar\Lambda_j^{\eps,\delta} \to
L_j^\delta$ in $C^\infty$. Now we claim that $\bar\Lambda_j^{\eps,\delta}$ is 
in fact (Legendrian isotopic to) the Legendrian lift $\Lambda_j^{\eps,\delta}$ 
of $L_j^\delta$ (as in Definition \ref{steinLFplusweinsteinhandles}) for 
sufficiently small $\eps>0$. In the rescaled limit as $\eps\to 0$, the  
projection of $\bar\Lambda_j^{\eps,\delta}$ to $V_{p_j}$ approaches 
$L_j^\delta$ in $C^\infty$, and this is enough to show that it converges (up to 
translation) to $\Lambda_j^{0,\delta}$ as $\eps \to 0$.  Hence the claim is 
valid, so we conclude that $\W(V^\sm)$ is deformation equivalent to
\begin{equation} \label{deltatotalspace}
   \bigl|(\pi: \pi^{-1} (D^2_\delta) \to D^2_\delta; 
   L_1^\delta, \ldots, L_m^\delta)\bigr| .
\end{equation}
We have used Lemma \ref{elementarycobordismisweinsteinhandle} below to show that
the Weinstein cobordism $\{0\leq\eps g(\phi) + h(\frac {|\pi|} \delta) \leq 
\log \frac 1\delta\}$ is a Weinstein handle attachment.

In the limit $\delta \to 0$, rescaling $D^2_\delta$ to $D^2$, clearly 
\eqref{deltatotalspace} converges to
\begin{equation}
   \bigl|(V_0 \times D^2 \to D^2; L_1 \times \{\alpha_1\}, \ldots,
   L_m \times \{\alpha_m\})\bigr|
\end{equation}
where $\alpha_j \in S^1 = \partial D^2$ are the angles of the critical points of
$\pi$. Hence using \eqref{twototalspaces}, we have shown the desired deformation
equivalence between $\W(V^\sm)$ and $|(\W(V_0, g(\phi_0)); L_1, \ldots, L_m)|$.
\end{proof}

\begin{lemma} \label{makecomplete}
Let $\pi: V \to D^2$ be a Stein Lefschetz fibration, and let $\phi: V \to \R$ be
$J$-convex with $\partial_h V = \{\phi = 0\}$ as a regular level set. There
exists a smooth function $g: \R_{<0} \to \R$ satisfying $g'>0$, $g''>0$, and 
$\lim_{x \to 0^-} g(x) = \infty$, such that the symplectic connection on 
$\pi: V \setminus \partial_h V \to D^2$ induced by $\omega_{g(\phi)}$ is 
\emph{complete}, in the sense that parallel transport along a smooth path in 
the base $D^2$ gives rise to a diffeomorphism between the corresponding fibers 
(away from the critical points of $\pi$).
\end{lemma}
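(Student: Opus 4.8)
The plan is to choose $g$ so that the horizontal distribution of the symplectic connection becomes ``short'' near $\partial_h V$ in a quantitative sense that makes parallel transport vector fields integrable for all time. The key observation is that completeness of a connection can fail only if a parallel transport trajectory escapes to the horizontal boundary in finite time; so it suffices to find $g$ such that along any such trajectory the function $\phi$ (or rather $g(\phi)$) is controlled. The natural strategy is to bound the horizontal lift: if $\gamma(t)$ is a path in $D^2$ and $\tilde\gamma(t)$ its horizontal lift, we want to estimate $\frac{d}{dt} g(\phi(\tilde\gamma(t)))$ and show it cannot blow $g(\phi)$ up to $+\infty$ in finite time, which (since $\lim_{x\to 0^-} g(x)=+\infty$) prevents escape to $\partial_h V$.

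First I would set up the local picture near $\partial_h V$. Since $\partial_h V = \{\phi=0\}$ is a regular level set and $\pi$ restricts to a fibration near it (the critical points of $\pi$ lie in the interior, away from $\partial_h V$), I can work in coordinates where $\phi$ is a boundary-defining function and $\pi$ is a submersion. The horizontal lift $\tilde v$ of a base vector $v$ is the unique $\omega_{g(\phi)}$-orthogonal-to-the-fiber lift. The quantity to control is $d\phi(\tilde v)$: writing $\omega_{g(\phi)} = g'(\phi)\,\omega_\phi + g''(\phi)\,d\phi\wedge d^c\phi$ (using $\omega_\phi = id'd''\phi$ and $d^c\phi = -J^*d\phi = \lambda_\phi$), one sees that as $g''$ grows the horizontal distribution gets pushed to be more and more tangent to the level sets of $\phi$. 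Concretely, I expect an estimate of the shape $|d\phi(\tilde v)| \le C(p)\, |v| \cdot \frac{g'(\phi)}{g''(\phi)}$ near $\partial_h V$, where $C$ is continuous (hence bounded on compacts) because it only involves $\phi$, $J$, and the fibration structure, all of which extend smoothly to a neighborhood of $\partial_h V$ in the ambient open manifold. The point is that $\frac{g'}{g''}$ can be made to vanish rapidly as $\phi\to 0^-$.

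Next I would convert this into a completeness statement via a Grönwall-type argument. Along a horizontal lift of a unit-speed path of length $\le L=2$ (diameter of $D^2$), we get $\left|\frac{d}{dt}\phi(\tilde\gamma(t))\right| \le C\, \frac{g'(\phi)}{g''(\phi)}$, hence $\left|\frac{d}{dt} g(\phi(\tilde\gamma(t)))\right| = g'(\phi)\left|\frac{d}{dt}\phi\right| \le C\, \frac{g'(\phi)^2}{g''(\phi)}$. So if I choose $g$ with $\frac{g'(x)^2}{g''(x)}$ bounded (say $\le 1$) on $\R_{<0}$ — for instance $g(x) = -\log(-x)$, for which $g'=-1/x$, $g''=1/x^2$, and $\frac{(g')^2}{g''}=1$ — then $g(\phi(\tilde\gamma(t)))$ changes by at most $CL$ over the whole path, so it stays bounded, so $\phi(\tilde\gamma(t))$ stays bounded away from $0$, so the trajectory stays in a compact subset of $V\setminus\partial_h V$ where the vector field is smooth with bounded (by a further compactness argument depending on the path and its neighborhood) norm; thus parallel transport exists for all time and defines a diffeomorphism between fibers. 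One must also check that $g(x)=-\log(-x)$ satisfies $g'>0$, $g''>0$, and $g(x)\to +\infty$ as $x\to 0^-$, which it does (note $g$ need not be positive, but replacing it by $g+c$ if Definition~\ref{steinLFplusweinsteinhandles}'s sign convention is wanted changes nothing).

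\textbf{The main obstacle} I anticipate is making the estimate $|d\phi(\tilde v)| \lesssim |v|\,\frac{g'}{g''}$ both precise and uniform enough. The subtlety is that $C(p)$ must be controlled uniformly as $p\to\partial_h V$, which requires that the ``fiberwise'' and ``horizontal'' data degenerate nicely there; this is fine because $\partial_h V$ is a regular level set of a $J$-convex function and $\pi$ is a genuine submersion near it, but the computation of the horizontal lift in terms of $g'\omega_\phi + g''\,d\phi\wedge d^c\phi$ requires inverting this two-form on the horizontal-candidate subspace and tracking how the $g''$ term dominates. An efficient way to organize it: fix a path $\gamma$ and a compact neighborhood $K$ in the ambient open manifold of a slightly larger path-swept region; on $K\cap\{\phi\ge -\epsilon_0\}$ decompose every tangent vector into its $\omega_\phi$-fiber-orthogonal part and fiber-tangent part, observe that the $g''\,d\phi\wedge d^c\phi$ term only ``sees'' the $d^c\phi$-direction, and solve the orthogonality equation $\omega_{g(\phi)}(\tilde v, \cdot)|_{\text{fiber}}=0$ directly — the $d\phi$-component of $\tilde v$ comes out with a $g''$ in the denominator exactly as needed. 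With $g'' \to \infty$ chosen as above this closes the argument, and the freedom to further modify $g$ (e.g.\ to also demand $\frac{(g')^2}{g''}\to 0$, giving room for the path-dependent constants) means there is no real tension in the choices.
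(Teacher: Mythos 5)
Your proposal takes essentially the same approach as the paper: both hinge on the estimate $|d\phi(\tilde v)| = O\bigl(g'(\phi)/g''(\phi)\bigr)\,|v|$ for horizontal lifts near $\partial_h V$, derived from the expansion $\omega_{g(\phi)} = g'(\phi)\,\omega_\phi + g''(\phi)\,id'\phi\wedge d''\phi$ and the dominance of the $g''$-term in the $d'\phi$-direction. The differences are cosmetic: the paper integrates a bound on $(\log(-\phi))'$ along the lift, giving the sufficient condition $g'(x)/(|x|\,g''(x))=O(1)$, whereas you integrate a bound on $(g\circ\phi)'$, giving the (generally more restrictive but still sufficient) condition $(g'(x))^2/g''(x)=O(1)$; and you offer the simpler example $g(x)=-\log(-x)$ in place of the paper's $g(x)=\int_{-\infty}^x e^{-t^2-t^{-1}}\,dt$, at the small cost of $g$ not being positive (not required by the lemma statement, though it is used implicitly when the lemma is invoked in the proof of Proposition \ref{steinLFtoweinsteinLF}).
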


\begin{proof}
We will in fact show that there exists a natural contractible family of 
functions $g$ which satisfy the desired conclusion for all $(V,\phi)$.

Let $g: \R_{<0} \to \R$ satisfy $g'>0$, $g''>0$, and $\lim_{x \to 0^-} g(x) = 
\infty$. Let $\perp_\phi$ (resp.\ $\perp_{g(\phi)}$) denote orthogonal
complement with respect to $\omega_\phi$ (resp.\ $\omega_{g(\phi)}$), so the
horizontal distribution of the symplectic connection induced by
$\omega_{g(\phi)}$ is $(\ker d\pi)^{\perp_{g(\phi)}}$.

Our first goal is to show that in a neighborhood of $\partial_hV$, every 
horizontal vector field $X$ satisfies
\begin{equation}\label{boundonXphi}
   |X \phi| = O \left(
   \frac{ g'(\phi) }{ g''(\phi) }\right) \cdot
   |\pi_*X|
\end{equation}
as long as $\frac{ g'(\phi) }{ g''(\phi) }$ sufficiently small.  Note that in a
neighborhood of $\partial_h V$, there is a direct sum decomposition
\begin{align}
   TV = (\ker d\pi \cap \ker d'\phi) &\oplus (\ker d\pi \cap \ker d'\phi)^{\perp_\phi} \cap \ker d\pi\cr
   &\oplus (\ker d\pi \cap \ker d'\phi)^{\perp_\phi} \cap \ker d'\phi
\end{align}
into subspaces of real dimension $2n-4$, $2$, $2$, respectively.  Now suppose
that $X = X_1 \oplus X_2 \oplus X_3 \in TV$ is horizontal, i.e.\ $X 
\perp_{g(\phi)} \ker d\pi$.  Note the explicit form
\begin{equation}
   \omega_{g(\phi)} = g'(\phi) \cdot \omega_\phi
 + g''(\phi) \cdot id'\phi \wedge d''\phi .
\end{equation}
We may choose a vector $v \in (\ker d\pi \cap \ker d'\phi)^{\perp_\phi} \cap 
\ker d\pi$ with $|v|_{g_\phi}=1$ such that $|(id'\phi \wedge
d''\phi)(v,X_2)|\asymp|X_2|_{g_\phi}$ (where $g_\phi$ denotes the metric induced
by $\phi$).  Now since $v \in \ker d\pi$, it pairs to zero with $X$ under 
$\omega_{g(\phi)}$, so we have
\begin{equation}
 0 = g'(\phi) \cdot \omega_\phi(v,X_2+X_3) + g''(\phi) \cdot (id'\phi \wedge d''\phi)(v,X_2)
\end{equation}
It follows from this that $|X_2|_{g_\phi} = O\bigl(\frac{ g'(\phi) }{ g''(\phi) 
}\bigr) \cdot |X_3|_{g_\phi}$ for $\frac{ g'(\phi) }{ g''(\phi) }$ sufficiently 
small.  This implies the desired estimate \eqref{boundonXphi} since $|\pi_* X|
\asymp |X_3|_{g_\phi}$ and $|X\phi| \asymp |X_2|_{g_\phi}$.

It now follows that the connection is complete as long as
\begin{equation} \label{completnesscondition}
   \limsup_{x\to 0^-} \frac {g'(x)} {|x| g''(x)} < \infty .
\end{equation}
Indeed, by \eqref{boundonXphi} this condition guarantees that the derivative of
$\log(-\phi)$ is bounded along the horizontal lift of a smooth curve in the 
base $D^2$.

We now just need to exhibit a function $g: \R_{<0} \to \R$ satisfying $g'>0$, 
$g''>0$, $\lim_{x \to 0^-} g(x) = \infty$, and \eqref{completnesscondition}, 
which we may write as
\begin{equation}
   \liminf_{x\to 0^-} \; (\log g'(x))'|x|>0.
\end{equation}
For example, we may take
\begin{equation}
   g(x) := \int_{-\infty}^x e^{-t^2-t^{-1}}\, dt .
\end{equation}
Moreover, the space of such functions is contractible, since 
the map $g \mapsto (g(-1), \log g')$ gives a bijection with a convex set.
\end{proof}

\subsection{Uniqueness of Weinstein handle attachment}

We record here a proof of the fact that an elementary Weinstein cobordism is 
``the same'' as a Weinstein handle attachment (the precise statement is Lemma 
\ref{elementarycobordismisweinsteinhandle}), as was used in the proof of 
Proposition \ref{steinLFtoweinsteinLF}. We were unable to find a precise 
reference for this standard fact, though it is of course implicit in Weinstein's
original paper \cite{weinstein}, as well as in Cieliebak--Eliashberg \cite
{cieliebakeliashberg}.

Recall that a Weinstein cobordism $(W, \lambda, \phi)$ is called \emph
{elementary} iff there is no trajectory of $X = X_\lambda$ between any two
critical points. For a critical point $p \in W$, we denote by $T_p^\pm W$ the 
positive/negative eigenspaces of $d_pX: T_pW \to T_pW$, and we denote the stable
manifold by $W_p^-$. For an elementary cobordism, each stable manifold $W_p^-$ 
intersects the negative boundary $\partial_- W$ in an isotropic sphere 
$\Lambda_p \subseteq \partial_- W$; note that $\Lambda_p = (W_p^- \setminus p) 
/ \R$ via the Liouville flow.  A choice of exponential coordinates $\exp_p: 
T_p^- W \to W_p^-$ and a small sphere centered at zero in $T^-_pW$ determines 
a diffeomorphism $(W_p^- \setminus p) / \R = (T_p^- W \setminus 0) / \R$.  We 
thus obtain a diffeomorphism $\rho_p \in \Diff ((T_p^-W \setminus 0) / \R, 
\Lambda_p)$ which is well-defined up to contractible choice.

\begin{lemma} \label{elementarycobordismisweinsteinhandle}
Let $(Y^{2n-1}, \lambda)$ be a contact manifold with contact form, let 
$\Lambda_1, \ldots, \Lambda_m \subseteq Y$ be disjoint Legendrian spheres, and
let $\sigma_j \in \Diff (S^{n-1}, \Lambda_j) / O(n)$. The following space is 
weakly contractible:
\begin{equation*}
\left\{ \parbox{4.8in}{
Elementary Weinstein cobordism $(W^{2n}, \lambda, \phi)$ with critical points
$p_j$ and stable manifolds $V_{p_j}$.\\
Isomorphism $i: (\partial_-W, \lambda) \xrightarrow \sim (Y, \lambda)$ sending 
$V_{p_j} \cap \partial_-W$ to $\Lambda_j$.\\
Path $q$ between $\sigma_j$ and the image of $\rho_{p_j}$ in 
$\Diff (S^{n-1}, \Lambda_j) / O(n)$.
}\right\}
\end{equation*}
in the sense that for all $k \geq 0$, any family of such objects $(W,i,q)$ over 
$\partial D^k$ can be extended to a family over $D^k$.
\end{lemma}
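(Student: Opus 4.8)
The plan is to prove the weak contractibility assertion by a direct geometric construction: given a family of the "input data" over $\partial D^k$ together with the obvious partial extension, we must fill in over $D^k$. The strategy is to decompose an element of the space into pieces each of which lives in a (parametrized-)contractible space, so that the fibered product is contractible. I would organize the proof around the following three steps.

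\textbf{Step 1 (Normal form near the critical points).} Near each critical point $p_j$, a Weinstein cobordism looks, up to canonical choices, like the standard Weinstein handle model on $T^*S^{n-1}$ or rather on $\R^{2n}$ with the standard Liouville form $\lambda_{\mathrm{std}} = \frac14\sum(x_i\,dy_i - y_i\,dx_i) + \sum(y_i\,dx_i + 2 x_i\,dy_i)$-type splitting into $T_p^+ \oplus T_p^-$; concretely, by a parametrized version of the Weinstein neighborhood theorem (interpolating Liouville forms via Moser and then the Morse lemma for $\phi$ adapted to $X_\lambda$), there is a tubular neighborhood $U_j$ of $p_j$ in which $(\lambda,\phi)$ is identified with a fixed local model, and this identification is unique up to contractible choice (the residual freedom being $O(n)$ rotating $T_p^-W$, which is exactly absorbed by the marking $\sigma_j \in \Diff(S^{n-1},\Lambda_j)/O(n)$ and the path $q$). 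This reduces us to cobordisms which are \emph{standard} in a neighborhood of $\bigcup_j \{p_j\}$, i.e. literally a Weinstein handle glued in.

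\textbf{Step 2 (Retracting onto the collar of $\partial_- W$).} Away from the standard neighborhoods $U_j$, the cobordism $(W \setminus \bigcup_j U_j, \lambda, \phi)$ contains no critical points, so the Liouville flow, suitably reparametrized (using $\phi$ as "time"), gives a diffeomorphism onto a collar $\partial_- W \times [0,1]$. On the piece $U_j$ the handle model carries its own canonical Liouville flow. One then shows that the global datum — the cobordism $W$, together with the isomorphism $i: \partial_- W \xrightarrow{\sim} Y$ and the path $q$ — is determined, up to parametrized contractible choice, by: (a) the normal-form data from Step 1 (contractible by Step 1), (b) the collar structure (contractible, being essentially the choice of a positive function and a trivialization of a line bundle, cf.\ the argument used for the function $g$ in the proof of Lemma \ref{makecomplete}), and (c) the isotopy of $\Lambda_j$ inside $Y$ matching $i(V_{p_j}\cap\partial_-W)$ with the given $\Lambda_j$ together with the marking — but Legendrian \emph{isotopy} together with the data of $q$ is precisely recording a path in $\Diff(S^{n-1},\Lambda_j)/O(n)$ connecting $\sigma_j$ to $\rho_{p_j}$, and the space of such (together with the handle framing) is contractible because the "extra" choices form an affine/convex family once the normal form is fixed.

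\textbf{Step 3 (Assembling the extension).} Given the family over $\partial D^k$, apply Steps 1--2 fiberwise to rewrite each member as (standard handles) $\cup$ (collar) $\cup$ (gluing data), with all the auxiliary identifications chosen over $\partial D^k$; since $D^k$ is contractible and each layer of data ranges over a parametrized-contractible space (the handle model is rigid, the collar data is convex, the gluing/path data is contractible as in Step 2), each layer extends over $D^k$, and reassembling yields the desired family of Weinstein cobordisms over $D^k$. The main obstacle, and the step requiring real care, is \textbf{Step 1}: making the Weinstein/Morse normal form near $p_j$ genuinely \emph{parametrized} and verifying that the residual ambiguity is \emph{exactly} an $O(n)$ acting on $T_p^-W$ (so that it matches the $O(n)$ in the definition of the marking $\sigma_j$) rather than something larger — this requires being careful that the compatible pairing of the Liouville form, the function $\phi$, and the splitting $T_pW = T_p^+W \oplus T_p^-W$ leaves no extra noncontractible freedom, which is where the gradient-like condition and the hypothesis that $\phi$ restricted to the unstable/stable directions be standard quadratic are used.
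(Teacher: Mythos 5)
Your Step 1 is essentially the paper's first two homotopies (normalizing $\phi$ and then $X$ near each critical point via cutoff functions in a Darboux chart), and the observation that the residual $O(n)$ ambiguity on $T_p^-W$ is absorbed by the marking $\sigma_j$ and the path $q$ is correct and matches the paper. But you have misidentified where the real difficulty lies: you flag Step 1 as ``the step requiring real care,'' whereas in fact the main technical content of the paper's proof comes \emph{after} normalization, in what you compress into Step 2 and dismiss as ``affine/convex'' or ``contractible as in Step 2.''

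After the normal form is achieved, the paper reduces the problem to extending, over $D^k$, a germ of co-orientation preserving contactomorphism $\tilde\rho$ between a neighborhood of the Legendrian $(T_p^-W\setminus 0)/\R_{>0}$ in $(T_pW\setminus T_p^+W)/\R$ and a neighborhood of $\Lambda_j$ in $Y$, compatibly with the already-extended $\rho$ on the Legendrian itself. The key claim is that the space of germs of co-orientation preserving contactomorphisms of $J^1S^{n-1}$ fixing the zero section pointwise is weakly contractible — and this is neither affine nor convex in any evident sense. The paper proves it by a genuine argument: conjugate $f$ by the flow $h_t$ of $X_\lambda + s\partial_s$ and pass to the limit $t\to\infty$, which linearizes $f$ along the zero section; then a direct computation with the contact condition shows that a fiberwise-linear contactomorphism of $J^1M$ covering $\id_M$ is given by $(\alpha,g)\mapsto(D\alpha + g\,dD,\, Dg)$ for a single positive function $D:M\to\R$, and only at this last stage does a linear homotopy ($D\rightsquigarrow 1$) appear. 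Your proposal skips this entirely, and there is no obvious way to replace it: the space of contactomorphism germs does not carry a convex structure, the retraction-to-collar picture does not by itself control the germ of the Weinstein structure along $\partial_-W\cup W_p^-$, and without this step the extension of $W$ over $D^k$ is not constructed. So the proposal has a genuine gap precisely at the point where the contact-geometric input (the $J^1M$ contractibility) is needed.
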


There is also a version of Lemma \ref{elementarycobordismisweinsteinhandle} for 
any critical points of any index, though it is more complicated to state since 
subcritical handle attachment requires an additional piece of data (a framing 
of the symplectic normal bundle of the attaching sphere). In this paper, we only
need the case of critical handle attachment, so we omit the more general 
statement and its proof. We thank Ya.\ Eliashberg for useful discussions 
regarding the proof.

\begin{proof}
Let a family over $\partial D^k$ be given ($k \geq 0$).

We first equip the family with local Darboux charts near the critical points,
and homotope it so that the Liouville vector field coincides with a certain
standard model in these charts. We phrase this part of the argument as if there 
is just a single triple $(W,i,q)$ and a single critical point $p$, but it is
clear that each step also works in families and for multiple critical points.
The details are as follows.

Fix a local symplectomorphism (Darboux chart) $\exp_p: (T_pW,0) \to (W,p)$ 
whose derivative at zero is the identity.  On the symplectic vector space 
$T_pW$, the vector field $d_pX: T_pW \to T_pW$ is
Liouville (this is just the linearization of the Liouville structure of $W$ near
$p$); it follows that the positive/negative eigenspaces $T_p^\pm W$ of $d_pX$
are Lagrangian \cite[Proposition 11.9]{cieliebakeliashberg}.

We first homotope the function $\phi$ so that
\begin{equation} \label{phistandard}
\exp_p^* \phi = \phi_\std \quad \text{near zero}
\end{equation}
where $\phi_\std: T_pW \to \R$ is given by $\phi_\std (v) := |v^+|^2 - |v^-|^2$.
Here we fix positive definite quadratic forms on $T_p^\pm W$ such that the 
Liouville vector field $\exp_p^* X$ is gradient-like for $\phi_\std$ near zero.
Note that the space of such quadratic forms is clearly open and convex, and it 
is seen to be non-empty by considering quadratic forms which are diagonal with 
respect to a basis which puts $d_pX$ into Jordan normal form.  Now we consider 
the homotopy $\{ 
\phi + (\phi_\std - \phi)t\chi \}_{t \in [0,1]}$ for some smooth compactly 
supported cutoff function $\chi: T_pW \to [0,1]$ which equals $1$ in a 
neighborhood of zero.  Its differential equals $(1-t\chi)d\phi + t\chi 
d\phi_\std + t(\phi_\std-\phi)d\chi$.  We have $\phi_\std-\phi=O(|v|^2)$, so to 
ensure that $\exp_p^* X$ is gradient-like throughout the homotopy, it suffices 
to choose $\chi$ so that $|d\chi|$ is much smaller than $|v|^{-1}$. Such a 
cutoff function exists (supported in any given neighborhood of zero) since 
$\int_0^1 r^{-1}dr$ diverges.  Thus we have achieved \eqref{phistandard}.

We next homotope the Liouville vector field $X$ so that
\begin{equation} \label{Xstandard}
\exp_p^* X = X_\std \quad \text{near zero}
\end{equation}
where $X_\std: T_pW \to T_pW$ acts by $-\id$ on $T_p^-W$ and by $2\id$ on 
$T_p^+W$ (observe that this is indeed a Liouville vector field).  Note that both
vector fields $\exp_p^* X$ and $X_\std$ are gradient-like with respect to 
$\exp_p^* \phi = \phi_\std$ near zero.  Write the Liouville form for 
$\exp_p^* X$ as $\lambda$, write the Liouville form for $X_\std$ as 
$\lambda_\std$, and write $\lambda_\std - \lambda = df$ for a function $f$ 
vanishing at zero.  We consider the homotopy $\{\lambda + d (t\chi f)\}_{t \in 
[0,1]}$ for $\chi$ as above. We may write this as
$(1-t\chi)\lambda + t\chi\lambda_\std + tfd\chi$. We have $f = O(|v|^2)$, so in 
order to guarantee that the resulting Liouville vector field remains 
gradient-like for $\exp_p^*\phi = \phi_\std$, it is again enough to choose 
$\chi$ so that $|d\chi|$ is much smaller than $|v|^{-1}$, which exists as 
before.  This achieves \eqref{Xstandard}.

We have now homotoped $X$ and $\phi$ near $p$ so that they coincide via the 
chosen Darboux chart $\exp_p: (T_pW,0) \to (W,p)$ with $X_\std$ and $\phi_\std$ 
as above near zero. Since $\exp_p^*X = X_\std$ in a neighborhood of zero, there 
is an induced diffeomorphism $\rho: (T_p^-W \setminus 0) / \R_{>0} \to \Lambda$.

Now $\Diff (S^{n-1}, \Lambda) / O(n)$ classifies vector bundles $V$ along with a
fiberwise diffeomorphism from the sphere bundle $S(V) := (V \setminus 0) /
\R_{>0}$ to $\Lambda$. Hence the data of $q$ determines an extension of the
vector bundle $T_p^-W$ from $\partial D^k$ to $D^k$ (also denoted $T_p^-W$),
an extension of the fiberwise diffeomorphism $\rho: (T_p^-W \setminus 0) / 
\R_{>0} \to \Lambda$ to $D^k$, and an extension of $q$ itself from $\partial
D^k$ to $D^k$. We may also extend $T_p^+W$ from $\partial D^k$ to $D^k$ by
observing that $T_p^+W = (T_p^-W)^*$ over $\partial D^k$ (by virtue of the
symplectic form) and thus defining $T_p^+W := (T_p^-W)^*$ over $D^k$. Hence
$T_pW := T_p^-W \oplus T_p^+W$ is a symplectic vector bundle over $D^k$. We
conclude that it suffices to extend $W$ from $\partial D^k$ to $D^k$ so that it 
has the chosen tangent spaces $T_pW$, has exponential charts $\exp_p$ satisfying
\eqref{phistandard} and \eqref{Xstandard} above, and so that it induces the 
chosen diffeomorphisms $\rho: (T_p^-W \setminus 0) / \R_{>0} \to \Lambda$.

Over any point in $D^k$, we have a co-oriented contact manifold $(T_pW \setminus T_p^+W) 
/ \R$ (quotient by the Liouville flow), and a Legendrian submanifold $(T_p^-W
\setminus 0) / \R_{>0}$ (quotient by dilation, which coincides with the
Liouville flow). Over any point in $\partial D^k$, the Liouville flow on $W$
determines a germ of co-orientation preserving contactomorphism $\tilde\rho$ between a neighborhood of
this Legendrian submanifold and a neighborhood of $\Lambda_p \subseteq Y$,
restricting to $\rho$. Conversely, a neighborhood of $W_p^-$ in $W$ is
determined by $T_pW = T_p^-W \oplus T_p^+W$ and the germ of co-orientation preserving contactomorphism
$\tilde\rho$. Note that $W$ always deforms down to a neighborhood of $\partial_-
W \cup W_p^-$. Thus it suffices to extend $\tilde\rho$ from $\partial D^k$ to
$D^k$ such that it restricts to $\rho$ (such an extension determines for us an
extension of $W$ from $\partial D^k$ to $D^k$).

To show that $\tilde\rho$ extends to $D^k$, it suffices to show that for any 
closed manifold $M$, the restriction map from germs of co-orientation preserving contactomorphisms of 
$J^1M$ mapping the zero section to itself to diffeomorphisms of $M$ is a weak 
homotopy equivalence (we will apply this to $M = S^{n-1}$). Equivalently, it 
suffices to show that the space of germs of co-orientation preserving contactomorphisms of $J^1M$ fixing 
the zero section pointwise is weakly contractible. Write $J^1M = T^*M \times \R$
with contact form $\lambda - ds$, and write $h_t$ for the flow of the contact 
vector field $X_\lambda + s\der_s{}$.
Fix any germ of co-orientation preserving contactomorphism $f:J^1M\to J^1M$
fixing the zero section pointwise, and we will define a canonical path from $f$
to the identity (clearly this is enough).  We first consider the limit as $t
\to \infty$ of the conjugation $h_t \circ f \circ h_t^{-1}$, which is nothing
other than the vertical projection of the derivative of $f$ along the zero 
section.  We are thus reduced to
considering a co-orientation preserving contactomorphism $f_0:J^1M \to J^1M$
which is a linear map of bundles over $M$.  Now a general such linear map has
the form
\begin{equation}
(\alpha,g)\mapsto(A\alpha+Bg,C\alpha+Dg)
\end{equation}
where $A: M \to \operatorname{End}(T^*M)$, $B: M \to T^*M$, $C: M \to TM$, $D: M
\to \R$ are sections over $M$.  As a contactomorphism, $f_0$ preserves the
Legendrian sections $(dg,g)$ of $J^1M$ over $M$, which means that
\begin{equation}
   A(dg) + gB = d(Cg) + gdD+Ddg
\end{equation}
for all functions $g: M \to \R$.  Since $d(Cg)$ is the only second-order term, 
we conclude that $C \equiv 0$.  Comparing first-order terms shows that $A = D
\cdot \id$, and finally we may solve for $B = dD$.  Thus $f_0: J^1M \to J^1M$ 
is given by $(\alpha, g) \mapsto (D \cdot
\alpha + g \cdot dD, D \cdot g)$ for some function $D: M \to \R$.  Since $f_0$
is a diffeomorphism, $D$ is non-vanishing, and since $f_0$ is co-orientation 
preserving, $D>0$ everywhere.  Finally, we may connect $f_0$ to the identity
using the obvious linear homotopy from $D$ to the constant function $1$.
\end{proof}

\bibliographystyle{amsalpha}
\bibliography{lefschetz}

\providecommand{\bysame}{\leavevmode\hbox to3em{\hrulefill}\thinspace}
\providecommand{\MR}{\relax\ifhmode\unskip\space\fi MR }
\providecommand{\MRhref}[2]{%
  \href{http://www.ams.org/mathscinet-getitem?mr=#1}{#2}
}
\providecommand{\href}[2]{#2}
\begin{thebibliography}{IMTP00}

\bibitem[Aur97]{aurouxI}
D.~Auroux, \emph{Asymptotically holomorphic families of symplectic
  submanifolds}, Geom. Funct. Anal. \textbf{7} (1997), no.~6, 971--995.
  \MR{1487750 (99b:57069)}

\bibitem[Aur00]{aurouxII}
Denis Auroux, \emph{Symplectic 4-manifolds as branched coverings of {$\mathbf
  C\mathbf P^2$}}, Invent. Math. \textbf{139} (2000), no.~3, 551--602.
  \MR{1738061 (2000m:53119)}

\bibitem[Aur02]{aurouxremark}
D.~Auroux, \emph{A remark about {D}onaldson's construction of symplectic
  submanifolds}, J. Symplectic Geom. \textbf{1} (2002), no.~3, 647--658.
  \MR{1959060 (2004i:53121)}

\bibitem[AV65]{andreottivesentini}
Aldo Andreotti and Edoardo Vesentini, \emph{Carleman estimates for the
  {L}aplace-{B}eltrami equation on complex manifolds}, Inst. Hautes \'Etudes
  Sci. Publ. Math. (1965), no.~25, 81--130. \MR{0175148 (30 \#5333)}

\bibitem[BEE12]{bourgeoisekholmeliashberg}
Fr{\'e}d{\'e}ric Bourgeois, Tobias Ekholm, and Yasha Eliashberg, \emph{Effect
  of {L}egendrian surgery}, Geom. Topol. \textbf{16} (2012), no.~1, 301--389,
  With an appendix by Sheel Ganatra and Maksim Maydanskiy. \MR{2916289}

\bibitem[CE12]{cieliebakeliashberg}
Kai Cieliebak and Yakov Eliashberg, \emph{From {S}tein to {W}einstein and
  back}, American Mathematical Society Colloquium Publications, vol.~59,
  American Mathematical Society, Providence, RI, 2012, Symplectic geometry of
  affine complex manifolds. \MR{3012475}

\bibitem[CG91]{cheegergromov}
Jeff Cheeger and Mikhael Gromov, \emph{Chopping {R}iemannian manifolds},
  Differential geometry, Pitman Monogr. Surveys Pure Appl. Math., vol.~52,
  Longman Sci. Tech., Harlow, 1991, pp.~85--94. \MR{1173034 (93k:53034)}

\bibitem[Dem96]{demailly}
Jean-Pierre Demailly, \emph{{$L^2$} estimates for the $\bar\partial$-operator
  on complex manifolds}, Notes de cours, Ecole d'{\'e}t{\'e} de
  Math{\'e}matiques (Analyse Complexe), Institut Fourier, Grenoble, Juin 1996.

\bibitem[Don90]{donaldsonYM}
S.~K. Donaldson, \emph{Yang-{M}ills invariants of four-manifolds}, Geometry of
  low-dimensional manifolds, 1 ({D}urham, 1989), London Math. Soc. Lecture Note
  Ser., vol. 150, Cambridge Univ. Press, Cambridge, 1990, pp.~5--40.
  \MR{1171888 (93f:57040)}

\bibitem[Don96]{donaldsonI}
\bysame, \emph{Symplectic submanifolds and almost-complex geometry}, J.
  Differential Geom. \textbf{44} (1996), no.~4, 666--705. \MR{1438190
  (98h:53045)}

\bibitem[Don99]{donaldsonII}
\bysame, \emph{Lefschetz pencils on symplectic manifolds}, J. Differential
  Geom. \textbf{53} (1999), no.~2, 205--236. \MR{1802722 (2002g:53154)}

\bibitem[Eli90]{eliashbergstein}
Yakov Eliashberg, \emph{Topological characterization of {S}tein manifolds of
  dimension {$>2$}}, Internat. J. Math. \textbf{1} (1990), no.~1, 29--46.
  \MR{1044658 (91k:32012)}

\bibitem[EM02]{hprinciple}
Y.~Eliashberg and N.~Mishachev, \emph{Introduction to the {$h$}-principle},
  Graduate Studies in Mathematics, vol.~48, American Mathematical Society,
  Providence, RI, 2002. \MR{1909245 (2003g:53164)}

\bibitem[Fri44]{friedrichs}
K.~O. Friedrichs, \emph{The identity of weak and strong extensions of
  differential operators}, Trans. Amer. Math. Soc. \textbf{55} (1944),
  132--151. \MR{0009701 (5,188b)}

\bibitem[Gir02]{giroux}
Emmanuel Giroux, \emph{G\'eom\'etrie de contact: de la dimension trois vers les
  dimensions sup\'erieures}, Proceedings of the {I}nternational {C}ongress of
  {M}athematicians, {V}ol. {II} ({B}eijing, 2002), Higher Ed. Press, Beijing,
  2002, pp.~405--414. \MR{1957051 (2004c:53144)}

\bibitem[H{\"o}r65]{hormander}
Lars H{\"o}rmander, \emph{{$L^{2}$} estimates and existence theorems for the
  {$\bar\partial $}\ operator}, Acta Math. \textbf{113} (1965), 89--152.
  \MR{0179443 (31 \#3691)}

\bibitem[IMTP00]{ibortmartineztorrespresas}
A.~Ibort, D.~Mart{\'{\i}}nez-Torres, and F.~Presas, \emph{On the construction
  of contact submanifolds with prescribed topology}, J. Differential Geom.
  \textbf{56} (2000), no.~2, 235--283. \MR{1863017 (2003f:53158)}

\bibitem[Mil64]{milnor}
J.~Milnor, \emph{On the {B}etti numbers of real varieties}, Proc. Amer. Math.
  Soc. \textbf{15} (1964), 275--280. \MR{0161339 (28 \#4547)}

\bibitem[Moh01]{mohsenthesis}
Jean-Paul Mohsen, \emph{Transversalit\'e quantitative et sous-vari\'et\'es
  isotropes}, Ph.D. thesis, ENS-Lyon, 2001.

\bibitem[Moh13]{mohsenpreprint}
\bysame, \emph{Transversalit\'e quantitative en g\'eom\'etrie symplectique :
  sous-vari\'et\'es et hypersurfaces}, Arxiv Preprint
  \textbf{arXiv:1307.0837v1} (2013), 1--34.

\bibitem[Sei08a]{seidelainftysubalg}
Paul Seidel, \emph{{$A_\infty$}-subalgebras and natural transformations},
  Homology, Homotopy Appl. \textbf{10} (2008), no.~2, 83--114. \MR{2426130
  (2010k:53154)}

\bibitem[Sei08b]{seidel}
\bysame, \emph{Fukaya categories and {P}icard-{L}efschetz theory}, Zurich
  Lectures in Advanced Mathematics, European Mathematical Society (EMS),
  Z\"urich, 2008. \MR{2441780 (2009f:53143)}

\bibitem[Sei09]{seidelshhh}
\bysame, \emph{Symplectic homology as {H}ochschild homology}, Algebraic
  geometry---{S}eattle 2005. {P}art 1, Proc. Sympos. Pure Math., vol.~80, Amer.
  Math. Soc., Providence, RI, 2009, pp.~415--434. \MR{2483942 (2010c:53129)}

\bibitem[Sei12]{seidelainftystruct}
\bysame, \emph{Fukaya {$A_\infty$}-structures associated to {L}efschetz
  fibrations. {I}}, J. Symplectic Geom. \textbf{10} (2012), no.~3, 325--388.
  \MR{2983434}

\bibitem[Voi02]{voisin}
Claire Voisin, \emph{Hodge theory and complex algebraic geometry. {I}},
  Cambridge Studies in Advanced Mathematics, vol.~76, Cambridge University
  Press, Cambridge, 2002, Translated from the French original by Leila Schneps.
  \MR{1967689 (2004d:32020)}

\bibitem[Wei91]{weinstein}
Alan Weinstein, \emph{Contact surgery and symplectic handlebodies}, Hokkaido
  Math. J. \textbf{20} (1991), no.~2, 241--251. \MR{1114405 (92g:53028)}

\bibitem[Won93]{wongkew}
Richard Wongkew, \emph{Volumes of tubular neighbourhoods of real algebraic
  varieties}, Pacific J. Math. \textbf{159} (1993), no.~1, 177--184.
  \MR{1211391 (94e:14073)}

\end{thebibliography}

\end{document}